\colorlet{RED}{red}
\newtheorem{thm}{Theorem}[section]
\newtheorem{conj}[thm]{Conjecture}
\newtheorem*{conj*}{Conjecture}
\newtheorem{cor}[thm]{Corollary}
\newtheorem{prop}[thm]{Proposition}
\newtheorem{lem}[thm]{Lemma}
\theoremstyle{definition}
\theoremstyle{remark}
\DeclareMathOperator{\ZZ}{\mathbb{Z}}
\DeclareMathOperator{\QQ}{\mathbb{Q}}
\DeclareMathOperator{\NN}{\mathbb{N}}
\DeclareMathOperator{\arm}{\mathrm{arm}}
\DeclareMathOperator{\leg}{\mathrm{leg}}
\DeclareMathOperator{\coarm}{\mathrm{coarm}}
\DeclareMathOperator{\coleg}{\mathrm{coleg}}
\DeclareMathOperator{\Real}{\operatorname{Re}}
\subjclass[2020]{11P82, 05A17, 05A15, 05A19, 11P83}
\keywords{Hook lengths, self-conjugate partitions, partitions with distinct odd parts, partition asymptotics, partition inequalities}
\title[Hook length biases for self-conjugate and distinct odd parts partitions]{Hook length biases for self-conjugate partitions and partitions with distinct odd parts}
\author[C. Cossaboom]{Catherine H. Cossaboom}
\address{Department of Mathematics, University of Virginia, 141 Cabell Drive, Charlottesville, VA 22903}
\email{qkb9us@virginia.edu}
\date{}
\begin{document}
\begin{abstract} We establish a hook length bias between self-conjugate partitions and partitions of distinct odd parts, demonstrating that there are more hooks of fixed length $t \geq 2$ among self-conjugate partitions of $n$ than among partitions of distinct odd parts of $n$ for sufficiently large $n$. More precisely, we derive asymptotic formulas for the total number of hooks of fixed length $t$ in both classes. This resolves a conjecture of Ballantine, Burson, Craig, Folsom, and Wen.
\end{abstract}

\maketitle

\section{Introduction}
A partition $\lambda = (\lambda_1, \lambda_2, \dots, \lambda_\ell)$ of an integer $n \geq 0$ is a non-increasing sequence of positive integers  $\lambda_1 \geq \lambda_2 \geq \cdots \geq \lambda_\ell$, which sum to $n$. We say that $\lambda$ has size $n$ and denote this by $|\lambda| = n$, and we call $\ell = \ell(\lambda)$ the length of $\lambda$. Further, we let $\mathcal{P}(n)$ be the set of all partitions of $n$, and we define the partition function $p(n) := |\mathcal{P}(n)|$ to count the number of partitions of $n$.

Given two sets of combinatorial objects that enjoy a bijection, one may naively suppose that their arithmetic statistics are equal, at least asymptotically. In the theory of partitions, restricted classes of partitions offer settings where that initial assumption is definitively false. Despite a natural bijection between two classes of partitions, hook numbers of fixed size can be more frequently found in one class than another. 

Here, we consider \textit{hook numbers} or \textit{hooks} of integer partitions. Hook numbers are often studied due to their representation-theoretic connections, determining dimensions of representations of $S_n$ \cite{Stanley}. To define the hook numbers of $\lambda$, we consider the \textit{Ferrers--Young diagram} of $\lambda$, which comprises $\ell$ rows of left-justified boxes, where the $i$th row contains $\lambda_i$ boxes. The hook number of a box at row $i$ and column $j$ is defined to be $(\lambda_i - j) + (\lambda_j - i) + 1$. In words, it is the length of the $L$-shape formed by the boxes below and to the right of the box, including the box itself. See Figure \ref{fig:hook_numbers}.

\begin{figure}[h]
    \centering

    \resizebox{3cm}{!}{
    \begin{tikzpicture}

        \def\boxsize{0.8}
        \def\textsize{\large}

        \draw (0, 0) rectangle (\boxsize, \boxsize) node[midway] {\textsize 7};
        \draw (\boxsize, 0) rectangle (2*\boxsize, \boxsize) node[midway] {\textsize 6};
        \draw (2*\boxsize, 0) rectangle (3*\boxsize, \boxsize) node[midway] {\textsize 4};
        \draw (3*\boxsize, 0) rectangle (4*\boxsize, \boxsize) node[midway] {\textsize 2};
        \draw (4*\boxsize, 0) rectangle (5*\boxsize, \boxsize) node[midway] {\textsize 1};

        \draw (0, -\boxsize) rectangle (\boxsize, 0) node[midway] {\textsize 4};
        \draw (\boxsize, -\boxsize) rectangle (2*\boxsize, 0) node[midway] {\textsize 3};
        \draw (2*\boxsize, -\boxsize) rectangle (3*\boxsize, 0) node[midway] {\textsize 1};

        \draw (0, -2*\boxsize) rectangle (\boxsize, -\boxsize) node[midway] {\textsize 2};
        \draw (\boxsize, -2*\boxsize) rectangle (2*\boxsize, -\boxsize) node[midway] {\textsize 1};

    \end{tikzpicture}}
    \caption{Hook numbers for the partition (5,3,2)}
    \label{fig:hook_numbers}
\end{figure}

Over the last decades, deep connections between $q$-series and hook numbers have been established, such as the Nekrasov--Okounkov formula \cite{Nekrasov2006} and Han's generalization \cite{Han}. These formulas have spurred extensive research on hook numbers, as in \cite{AS, BCMO, BOW, GKS, OS}, with special attention to the statistic $n_t(\lambda)$, which counts the number of $t$-hooks in the partition $\lambda$, as in \cite{GOT}. Recent studies have frequently discussed $n_t(\lambda)$ in restricted partitions, as in \cite{AAOS, BBCFW, CDH, COS, SinghBarman}.

In \cite{BBCFW}, Ballantine, Burson, Craig, Folsom, and Wen compare the total number of hooks of fixed length in odd partitions of size $n$, denoted $\mathcal{O}(n)$, to distinct partitions, denoted $\mathcal{D}(n)$, for which Euler \cite[Corollary 1.2]{Andrews} establishes a bijection. Precisely, the authors discuss the partition statistics 
\begin{equation} a_t(n) := \sum_{\lambda \in \mathcal{O}(n)} n_t(\lambda) \quad \text{and} \quad b_t(n) := \sum_{\lambda \in \mathcal{D}(n)} n_t(\lambda). \end{equation}

In \cite{Andrews2}, Andrews proved a conjecture of Beck that states that $b_1(n) \geq a_1(n)$ for all $n \geq 0$, providing the first known example of a \textit{hook length bias.} Because Euler's bijection establishes that \begin{equation*} n |\mathcal{O}(n)| = \sum_{t \geq 1} a_t(n) = \sum_{t \geq 1} b_t(n) = n | \mathcal{D}(n)|, \end{equation*}
it is sensible to ask when the bias switches direction: at what point must $a_t(n) \geq b_t(n)$ for $t \geq 2$? The authors of \cite{BBCFW} pioneer this inquiry, showing that $a_2(n) \geq b_2(n)$ and $a_3(n) \geq b_3(n)$ for large $n$. Further, for $t \geq 2$, they conjecture there exists $N_t$ for which $a_t(n) \geq b_t(n)$ for all $n > N_t$. In \cite{CDH}, Craig, Dawsey, and Han prove this conjecture, demonstrating that such biases exist for all $t \geq 2$.

In this paper, we establish hook length biases for two other restricted classes which possess a natural bijection: partitions with distinct odd parts, denoted $\mathcal{DO}(n)$, and self-conjugate partitions, denoted $\mathcal{SC}(n)$. We study the following partition statistics, choosing notation defined in \cite{BBCFW}:
\begin{equation} a^*_t(n) := \sum_{\lambda \in \mathcal{SC}(n)} n_t(\lambda) \quad \text{and} \quad b^*_t(n) := \sum_{\lambda \in \mathcal{DO}(n)} n_t(\lambda). \end{equation}
Heuristically, hook numbers of distinct odd parts partitions tend to be small or large, while hook numbers of self-conjugate partitions tend to take intermediate values. Ballantine, Burson, Craig, Folsom, and Wen made this notion precise in the following conjectures. Craig, Dawsey, and Han strengthened the second statement.
\begin{conj}[Ballantine--Burson--Craig--Folsom--Wen, Craig--Dawsey--Han]\label{conj} 

Let $t \geq 2$. Then the following are true: 
\begin{enumerate}
\item There exists some integer $N^*_t$ such that $a^*_t(n) \geq b^*_t(n)$ for all $n > N^*_t$.
\item There exists some constant $\gamma_t^* > 1$ such that $a^*_t(n)/b^*_t(n) \rightarrow \gamma_t^*$ as $n \rightarrow \infty$. 
\end{enumerate} 
\end{conj}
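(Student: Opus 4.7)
The plan is to prove the stronger statement (2) directly; part (1) then follows immediately from $\gamma_t^* > 1$. Following the Craig--Dawsey--Han treatment of the $\mathcal{O}/\mathcal{D}$ case, I would derive precise leading-order asymptotics for $a_t^*(n)$ and $b_t^*(n)$ via generating functions and Wright's circle method, and compare the leading constants.

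\textbf{Step 1 (Generating functions).} For $A_t^*(q) := \sum_n a_t^*(n) q^n$, use the $t$-core/$t$-quotient decomposition: a self-conjugate partition $\lambda$ corresponds to a self-conjugate $t$-core $c$ together with a $t$-quotient $(\mu^{(0)}, \ldots, \mu^{(t-1)})$ satisfying the symmetry $\mu^{(i)} = (\mu^{(t-1-i)})'$. The free data are $\lceil t/2\rceil$ partitions (with the middle one self-conjugate when $t$ is odd), and $n_t(\lambda) = \sum_i |\mu^{(i)}|$ while $|\lambda| = |c| + t\sum_i |\mu^{(i)}|$. Differentiating the bivariate generating function at $z = 1$ expresses $A_t^*(q)$ as an eta-quotient times an explicit series in $q^t$. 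For $B_t^*(q) := \sum_n b_t^*(n) q^n$, write $n_t(\lambda) = \sum_{c \in \lambda} \mathbf{1}[h(c)=t]$ and analyze which cells of a partition with distinct odd parts carry hook length $t$; the resulting combinatorics (controlled by the gaps between consecutive odd parts) yields $B_t^*(q) = \prod_{k \geq 1}(1+q^{2k-1}) \cdot \Phi_t(q)$ for an explicit series $\Phi_t(q)$, in the spirit of the Ballantine--Burson--Craig--Folsom--Wen analysis for $\mathcal{O}/\mathcal{D}$.

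\textbf{Step 2 (Asymptotics).} Apply Wright's circle method to both generating functions. Since $\prod_{k \geq 1}(1+q^{2k-1})$ is the common generating function for $|\mathcal{SC}(n)| = |\mathcal{DO}(n)|$, both $A_t^*$ and $B_t^*$ share the dominant exponential growth, and a saddle-point analysis near $q = 1$ yields
$$a_t^*(n) \sim C_t^* \, n^{\alpha_t}\, e^{\pi\sqrt{n/6}}, \qquad b_t^*(n) \sim D_t^* \, n^{\beta_t}\, e^{\pi\sqrt{n/6}},$$
with $\alpha_t$, $\beta_t$, $C_t^*$, $D_t^*$ determined by the correction factors from Step~1. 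The heuristic that every non-diagonal hook in a self-conjugate partition is mirrored by a second hook of the same length makes the prediction $\alpha_t = \beta_t$ and $C_t^* > D_t^*$ plausible for $t \geq 2$.

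\textbf{Main obstacle.} The hardest step will be the quantitative comparison $\gamma_t^* = C_t^*/D_t^* > 1$. Because the exponential growth rates agree, the bias lives entirely in the subexponential factors, so one must extract the asymptotic expansions of $A_t^*$ and $B_t^*$ to sufficient precision to read off the dominant constants. This requires uniform saddle-point estimates in Wright's method, careful control of contributions from secondary singularities on the unit circle (where $\prod_{k \ge 1}(1+q^{2k-1})$ is sensitive at $q = -1$), and ultimately an explicit evaluation of the ratio $C_t^*/D_t^*$ --- likely reducing to an infinite sum or integral identity whose value must be verified to exceed $1$ for every $t \geq 2$.
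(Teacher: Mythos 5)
Your overall architecture --- explicit generating functions for $a_t^*$ and $b_t^*$, asymptotics near $q=1$, Wright's circle method, then a comparison of leading constants --- is exactly the paper's strategy (the paper imports the $A_t^*(q)$ formula from Amdeberhan--Andrews--Ono--Singh rather than rederiving it, and builds $B_t^*(q)$ by decomposing the Ferrers--Young diagram around a cell of hook length $t$ via its arm, leg, and coarm). However, there are two genuine problems. First, your core/quotient setup for $A_t^*$ uses the wrong statistic: for a partition with $t$-quotient $(\mu^{(0)},\dots,\mu^{(t-1)})$, the quantity $\sum_i|\mu^{(i)}|$ counts the cells of $\lambda$ whose hook length is \emph{divisible} by $t$, not equal to $t$; the cells of hook length exactly $t$ correspond to the hook-length-one cells of the quotient, i.e.\ $n_t(\lambda)=\sum_i(\text{number of distinct parts of }\mu^{(i)})$. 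Differentiating your bivariate series at $z=1$ as written would produce a Lambert-type factor $\sum_{k\geq 1} q^{2tk}/(1-q^{2tk})$ carrying an extra logarithmic singularity at $q=1$, hence the wrong polynomial order in the asymptotic for $a_t^*(n)$; with the corrected statistic one recovers $\frac{tq^{2t}}{1-q^{2t}}(-q;q^2)_\infty$, matching the cited formula.

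Second, what you label the ``main obstacle'' is the entire content of the theorem, and the proposal offers no route through it. The paper's resolution is: (i) a modified Euler--Maclaurin summation applied to the series defining $B_t^*$ gives $B_t^*(q)\sim\beta_t^*\,(-q;q^2)_\infty/z$ with $\beta_t^*$ an explicit finite combination of integrals $I(a,c)=\int_0^\infty e^{-ax}(1+e^{-2x})^{-c}\,dx$, while $A_t^*(q)\sim\tfrac12(-q;q^2)_\infty/z$, so $\gamma_t^*=1/(2\beta_t^*)$; (ii) the inequality $\gamma_t^*>1$ becomes $\beta_t^*<\tfrac12$, proved by evaluating the integrals in closed form, establishing a recurrence $\beta_t^*=\beta_{t-6}^*+(\text{explicitly bounded corrections})$, summing the tail, and checking the finitely many remaining $t$ directly. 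Note also that the minor-arc control you flag at $q=-1$ is handled by the Ngo--Rhoades hypotheses, but one additionally needs $L_t(q)=B_t^*(q)/(-q;q^2)_\infty$ to be polynomially bounded off the major arc; the paper obtains this by showing $L_t$ is rational in $q$ up to an explicit Lambert-type series, a nontrivial step your sketch does not anticipate.
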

\noindent We prove this conjecture. It suffices to prove Conjecture \ref{conj}(2), as Conjecture \ref{conj}(1) follows. 

\begin{thm}\label{theorem1} 
Conjecture \ref{conj} is true. 
\end{thm}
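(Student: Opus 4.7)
The plan is to prove Conjecture 1.1(2), from which Conjecture 1.1(1) immediately follows (as noted in the excerpt). The strategy is to establish matching asymptotic formulas
$$
a_t^*(n) \sim \alpha_t \cdot n^{\sigma_t} e^{\pi\sqrt{n/6}}, \qquad b_t^*(n) \sim \beta_t \cdot n^{\sigma_t} e^{\pi\sqrt{n/6}},
$$
and then verify that $\gamma_t^* := \alpha_t/\beta_t > 1$. The exponential order $e^{\pi\sqrt{n/6}}$ and the polynomial order $n^{\sigma_t}$ must agree across the two statistics because both generating functions should factor as an auxiliary $q$-series times the common infinite product $G(q) := \prod_{m \geq 1}(1+q^{2m-1})$, which generates both $|\mathcal{SC}(n)|$ and $|\mathcal{DO}(n)|$ and governs the singular behavior at $q=1$.

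The first step is to derive tractable closed forms for $A_t^*(q) := \sum_n a_t^*(n)q^n$ and $B_t^*(q) := \sum_n b_t^*(n)q^n$. For $B_t^*(q)$, I would count each $t$-hook in a distinct-odd-part partition by fixing arm and leg lengths $(a,\ell)$ with $a+\ell = t-1$ and then summing over admissible first columns and completions; the distinct-odd constraint cuts down the valid configurations and should collapse into an identity $B_t^*(q) = H_t^B(q)\, G(q)$ with $H_t^B$ an explicit finite sum in $q$. For $A_t^*(q)$, I would exploit the symmetry of $\mathcal{SC}$: hooks on the principal diagonal have odd length $2(\lambda_i - i) + 1$ and occur singly, whereas off-diagonal hooks come in matched pairs at $(i,j)$ and $(j,i)$. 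Treating these contributions separately, or alternatively invoking the $2$-core/$2$-quotient decomposition (which reduces self-conjugate partitions to a single auxiliary partition), should yield $A_t^*(q) = H_t^A(q)\, G(q)$ with an explicit $H_t^A$.

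The second step is the asymptotic analysis, following the Wright-style circle method used by Craig, Dawsey, and Han in \cite{CDH} for the analogous odd/distinct biases. Using the modular transformation behavior of $G(q)$ near $q = 1$ together with the local expansions of $H_t^A$ and $H_t^B$ at the dominant cusp, one reads off $\alpha_t$, $\beta_t$, and $\sigma_t$. The third step is arithmetic: with the explicit constants in hand, I compute $\gamma_t^* = \alpha_t/\beta_t$ and verify $\gamma_t^* > 1$, likely splitting into cases by the parity of $t$ (only odd $t$ admit principal diagonal hooks in $\mathcal{SC}$, which alters the structure of $H_t^A$).

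The hardest step will be obtaining a clean enough formula for $A_t^*(q)$ to support singular analysis. Cell-by-cell accounting naturally produces double sums, and the self-conjugacy constraint couples rows to columns in a way that obstructs the most direct manipulations; the $2$-core/$2$-quotient route, while elegant in principle, introduces its own bookkeeping. A further concern is establishing $\gamma_t^* > 1$ \emph{uniformly} in $t$ rather than verifying only small cases: this should come from a structural comparison of $H_t^A$ and $H_t^B$ at $q = 1$, reflecting the heuristic that distinct-odd partitions produce very spiky Young diagrams (favoring extreme hook lengths), while self-conjugate partitions are more balanced and thus carry more intermediate-length hooks.
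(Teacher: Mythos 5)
Your proposal follows essentially the same route as the paper: factor both generating functions as an auxiliary $q$-series times $(-q;q^2)_\infty$ (the paper quotes the Amdeberhan--Andrews--Ono--Singh formula for $A^*_t(q)$ and derives $B^*_t(q)$ by exactly the arm/leg/coarm decomposition you describe), apply the Ngo--Rhoades form of Wright's circle method to get $a^*_t(n)\sim \tfrac12\cdot C n^{-1/4}e^{\pi\sqrt{n/6}}$ and $b^*_t(n)\sim \beta^*_t\cdot 2C n^{-1/4}e^{\pi\sqrt{n/6}}$, and then show $\gamma^*_t=1/(2\beta^*_t)>1$. The one inversion worth noting is that $A^*_t(q)$ turns out to be the easy, cleanly rational side while $B^*_t(q)$ is the messy one, and the ``structural comparison at $q=1$'' you defer is where the paper's real labor sits: $\beta^*_t$ is expressed as an explicit finite sum of integrals $\int_0^\infty e^{-ax}(1+e^{-2x})^{-c}\,dx$, and the uniform bound $\beta^*_t<\tfrac12$ is obtained by a mod-$6$ recurrence in $t$ with explicit tail estimates plus a finite check for small $t$.
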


Theorem \ref{theorem1} will follow from the components of Theorem \ref{theorem2}. 

\begin{thm}\label{theorem2} We demonstrate the following.
\begin{enumerate}
\item For all $t \geq 1$, we have \begin{equation*} a_t^*(n) \sim \frac{\sqrt[4]{3}}{2 \pi \sqrt[4]{2} \cdot n^{\frac{1}{4}}} e^{ \pi \sqrt{n / 6 }}. \end{equation*}
\item For all $t \geq 1$, there exists a constant $\beta^*_t \in \QQ(\log(2))$ such that \begin{equation*} b_t^*(n) \sim \beta^*_t \frac{\sqrt[4]{3}}{\pi \sqrt[4]{2} \cdot n^{\frac{1}{4}}} e^{ \pi \sqrt{n / 6 }}, 
\end{equation*}
where $\beta^*_t \in \QQ$ if and only if $t \equiv 0 \pmod{3}$. 
\item For all $t \geq 2$, we have $\beta_t^* < \frac{1}{2}$.
\end{enumerate}
\end{thm}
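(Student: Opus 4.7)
The overall strategy is to derive closed-form generating functions
$A_t^*(q) := \sum_n a_t^*(n) q^n$ and
$B_t^*(q) := \sum_n b_t^*(n) q^n$,
analyze their behavior near the dominant singularity $q = 1$, and invoke Wright's circle method to extract the coefficient asymptotics. Since $\log \prod_{k \geq 1}(1+q^{2k-1}) \sim \pi^2/(24(1-q))$ as $q \to 1^-$, the common infinite-product factor produces the universal exponential $e^{\pi \sqrt{n/6}}$; the polynomial powers of $n$ and the leading constants are dictated by the auxiliary factors multiplying this product.

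For part (1), I would exploit the symmetry $h_{i,j}(\lambda) = h_{j,i}(\lambda)$ for $\lambda = \lambda'$: off-diagonal $t$-hooks come in equal pairs, and at most one diagonal hook can equal $t$ (necessarily with $t$ odd). Via the principal-hook bijection $\mathcal{SC}(n) \leftrightarrow \mathcal{DO}(n)$, one encodes each $t$-hook in terms of the diagonal data $h_1 > h_2 > \cdots$ (distinct odd integers summing to $n$) together with an arm/leg decomposition of the off-diagonal cells. This should yield a factorization
\begin{equation*}
A_t^*(q) = \frac{C_t(q)}{1-q} \prod_{k \geq 1}(1+q^{2k-1})
\end{equation*}
with $C_t(q) \to C$ as $q \to 1^-$, where $C$ is a positive constant \emph{independent} of $t$. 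This $t$-independence reflects the fact that hooks in a typical large self-conjugate partition are distributed essentially uniformly over their possible lengths. Applying Wright's method then gives (1), with $n^{-1/4}$ being one power of $n^{1/2}$ larger than the partition-count power $n^{-3/4}$, coming from the extra $(1-q)^{-1}$ factor.

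For part (2), I would count $t$-hooks in $\lambda \in \mathcal{DO}$ by writing $t - 1 = a + l$ (arm plus leg) and identifying, for each such $(a,l)$, where in a distinct-odd-parts partition a cell with the prescribed arm and leg can sit. Summing over $(a,l)$ yields an analogous factorization
\begin{equation*}
B_t^*(q) = \frac{H_t(q)}{1-q} \prod_{k \geq 1}(1+q^{2k-1})
\end{equation*}
with $\beta_t^* := \lim_{q \to 1^-} H_t(q)$. The limit is expressible via Dirichlet-type sums such as $\sum_k (-1)^{k+1}/(2k+1)$ and $\sum_k 1/(2k+1)$, restricted to arithmetic progressions determined by $t$; the natural source of $\log 2$ is the identity $\eta(1) = \log 2$ emerging in the Mellin transform of the distinct-odd-parts kernel. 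The dichotomy $t \equiv 0 \pmod 3$ versus $t \not\equiv 0$ should emerge as a cancellation of the $\log 2$ coefficient happening precisely when $3 \mid t$, a consequence of the odd integers cycling through the residues $1, 0, 2 \pmod 3$ with period three.

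For part (3), once $\beta_t^* = p_t + q_t \log 2$ is identified with $p_t, q_t \in \mathbb{Q}$, the inequality $\beta_t^* < 1/2$ reduces to a numerical estimate using $\log 2 \in (0.6931, 0.6932)$. The main obstacle is the combinatorial derivation of $H_t(q)$ in a form clean enough to cleanly identify $\beta_t^*$: the distinct-odd-parts hook structure is more intricate than its self-conjugate counterpart, and organizing the enumeration to isolate the $(1-q)^{-1}$ pole requires care. A secondary obstacle is that $\beta_t^* < 1/2$ must hold \emph{uniformly} in $t$; I expect this to be handled by a large-$t$ bound on the rational constants $p_t, q_t$ (perhaps via monotonicity in $t$ or an integral-comparison argument) combined with direct verification for finitely many small $t$.
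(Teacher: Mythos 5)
Your overall route—derive $A_t^*(q)$ and $B_t^*(q)$, peel off the factor $(-q;q^2)_\infty$ responsible for $e^{\pi\sqrt{n/6}}$, identify a simple pole at $q=1$ in the remaining factor, and apply Wright's circle method, then prove (3) by a large-$t$ bound on $\beta_t^*$ plus a finite check—is exactly the paper's strategy, down to the arm-plus-leg decomposition $t-1=a+\ell$ used to build $B_t^*(q)$ and the constancy in $t$ of the residue $\tfrac12$ for $A_t^*$.

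The genuine gap is that you never address the minor-arc hypotheses of Wright's method, and for $B_t^*$ this is where most of the analytic work lives. The Ngo--Rhoades formulation requires, besides the asymptotic $L(e^{-z})\sim \beta_t^*/z$ in a cone $|y|\le \Delta x$, a polynomial bound $L(e^{-z})\ll |z|^{-C}$ as $z\to 0$ \emph{outside} the cone, where $q$ approaches roots of unity. The factor $L_t(q)=B_t^*(q)/(-q;q^2)_\infty$ is an infinite sum over the coarm parameter of terms $q^{am}/(-q^{2m+b};q^2)_c$, and controlling it off the major arc is not automatic: the paper does this by proving $L_t$ is a rational function of $q$ plus (when $3\nmid t$) an explicit Lambert-type series $q^K\sum_j(-q)^{3j+\alpha}/(1-q^{2j+\alpha})$, which can then be bounded by $|z|^{-2}$ via divisor-sum estimates. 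Without some such structural statement your appeal to the circle method is incomplete. A second, smaller gap: you assert that $H_t(q)=(1-q)B_t^*(q)/(-q;q^2)_\infty$ converges as $q\to 1^-$, but the $1/(1-q)$ pole here does not come from a single geometric factor—it comes from the divergence of the sum over the coarm parameter $m$, so establishing the limit (and identifying $\beta_t^*$ as a finite sum of integrals $\int_0^\infty e^{-ax}(1+e^{-2x})^{-c}\,dx$, whence the $\log 2$ via $\int_{1/2}^1 dx/x$) requires an Euler--Maclaurin argument valid in the cone, not just a formal passage to the limit. With those two pieces supplied, the rest of your outline, including the $3\mid t$ rationality dichotomy and the strategy for (3), matches the paper.
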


Theorem \ref{theorem2}(1) and (2) together imply that $a^*_t(n)/b^*_t(n) \rightarrow \frac{1}{2\beta^*_t}$ as $t \rightarrow \infty$. Thus, Theorem \ref{theorem1} follows from Theorem \ref{theorem2}(3). We also prove the following result about $\gamma_t^* = \frac{1}{2 \beta_t^*}$.

\begin{thm}\label{theorem3} As $t \rightarrow \infty$, we have that
\begin{align*} \gamma_t^* &\rightarrow \frac{3}{2 \ln \left( 5/2 \right)} = 1.6370350019... \end{align*}
\end{thm}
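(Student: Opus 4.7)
By the discussion immediately following Theorem~\ref{theorem2}, $\gamma_t^* = 1/(2\beta_t^*)$, so Theorem~\ref{theorem3} is equivalent to proving that
\[ \lim_{t \to \infty} \beta_t^* = \frac{\log(5/2)}{3}. \]
My plan is to extract a workable closed form for $\beta_t^*$ from the asymptotic analysis used in Theorem~\ref{theorem2}(2) and then compute its $t \to \infty$ limit.

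First I would unpack the proof of Theorem~\ref{theorem2}(2). The constant $\beta_t^*$ arises as the non-exponential coefficient of the generating function $\sum_{\lambda \in \mathcal{DO}(n)} n_t(\lambda) q^{|\lambda|}$ evaluated at the dominant saddle (or the corresponding Meinardus/Wright circle-method data) near $q = 1$. Because this generating function factors as the hook-tracking factor times $\prod_{k \geq 0}(1 + q^{2k+1})$, and since $\beta_t^* \in \mathbb{Q}(\log 2)$, I would write $\beta_t^* = r_t + s_t \log 2$ with $r_t, s_t \in \mathbb{Q}$ both depending explicitly on $t$, and track each component separately. The ``$\log 2$'' comes from $\log \prod(1+q^{2k+1})$-type contributions; the rationality of $r_t$ reflects that other saddle contributions collapse to harmonic-type sums. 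I expect the residual $t$-dependence to enter through a term behaving like a finite sum over the divisors or congruence classes of $t$, plus a main term independent of $t$ up to an error decaying in $t$.

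Second, I would take $t \to \infty$ in this formula. The limit needs to produce the transcendental value $\log(5/2) = \log 5 - \log 2$, so $\log 5$ has to emerge in the limit even though the finite formulas only feature $\log 2$. A natural route is that the $t$-dependent piece is a partial sum of a convergent series which, in the limit, evaluates to a logarithm at a rational point; the identity $\log(5/2) = -\log(1 - 3/5) = \sum_{k \geq 1} (3/5)^k/k$ is a natural target, and the factor $1/3$ in the denominator is consistent with the $\pi\sqrt{n/6}$ saddle parameter producing such rational denominators. Once $\lim \beta_t^* = \log(5/2)/3$ is established, reciprocating gives $\gamma_t^* \to 3/(2\log(5/2)) = 1.6370350019\ldots$

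The main obstacle is pinning down the correct algebraic presentation of $\beta_t^*$. In the form that comes directly out of the saddle-point computation, the rational and $\log 2$ parts are likely tangled up in finite sums that do not individually converge as $t \to \infty$; the convergent combination that yields $\log(5/2)/3$ will only become transparent after substantial rearrangement. Performing this rearrangement cleanly, and recognizing the tail as a classical Taylor series for a logarithm at a rational argument, is where the real work of the proof lies.
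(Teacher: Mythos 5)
Your proposal correctly reduces the theorem to computing $\lim_{t\to\infty}\beta_t^*$ and correctly anticipates that the limit must be extracted from an explicit presentation of $\beta_t^*$ as a rational number plus a rational multiple of $\log 2$. But as written it is an outline with the entire computational core deferred: you yourself say that ``performing this rearrangement cleanly\dots is where the real work of the proof lies.'' That work is the proof. In the paper, one first needs the closed form of Theorem~\ref{betastart} (itself the output of Section~\ref{sectionevaluatingbeta}, via Proposition~\ref{Srst}, the combinatorial identities of Lemma~\ref{binomidentity}, and Proposition~\ref{almostthetheorem}); one then passes to the limit along residue classes $t\equiv t'\pmod 6$, observes that the integral terms vanish, and is left with six families of infinite series $g_1,\dots,g_6$ over congruence classes mod $3$. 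Evaluating these requires the digamma series for $g_1$, cube-root-of-unity filters applied to $-\log(1-x)$ for $g_2$, and, for $g_4$ and $g_5$, Gauss hypergeometric series ${}_2F_1(\cdot,1;\cdot;-\tfrac{1}{64})$ evaluated through the Euler integral representation. None of these steps is present or even sketched in your proposal, so there is no argument to check.

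Moreover, your one concrete guess about the mechanism --- that $\log(5/2)$ emerges as the Taylor series $-\log(1-3/5)=\sum_{k\ge 1}(3/5)^k/k$ --- does not match what actually happens and I see no reason it would. In the paper the logarithms that appear at intermediate stages are $\ln(65/64)$, $\ln(3/4)$, $\ln(1/2)$, $\ln(65/4)$, and (from the hypergeometric evaluations) $\ln(13/5)$; the value $\tfrac13\ln(5/2)$ arises only at the very end from the cancellation $\tfrac16\ln(65/4)-\tfrac16\ln(13/5)=\tfrac16\ln\bigl((5/2)^2\bigr)$. The series being summed live at arguments such as $-1/64$ and $1/4$, not $3/5$. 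A further point your plan does not address: the limit of $2\beta_t^*$ a priori depends on $t'\bmod 6$, i.e.\ on which two residues mod $3$ enter the formula, and one must verify that $G(0)=G(1)=G(2)$ so that all six subsequential limits agree. Without that verification you have not shown the limit exists, only that limits exist along each arithmetic progression.
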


Figure \ref{fig:gamma_values} illustrates the convergence of $\gamma_t^*$ for $t \geq 2$ numerically. In fact, we produce an explicit formula for $\gamma_t^*$. The formula is quite involved, so its presentation is postponed until Section \ref{sectionevaluatingbeta}. 

\pagebreak

\begin{figure}[h]
    \centering
    \resizebox{4cm}{!}{
    \begin{tabular}{|c|c|}
        \hline
        t & \(\gamma_t^*\) \\
        \hline
         2 & 1.4426950409... \\
        3 & 2.0000000000... \\
        4 & 1.4426950409... \\
        5 & 1.7601073000... \\
        10 & 1.6259576185... \\
        100 & 1.6369011056... \\
        1000 & 1.6366790000... \\
        10000 & 1.6370349885... \\
        \hline
    \end{tabular}}
    \caption{Values of \(\gamma_t^*\) for various \(t\)}
    \label{fig:gamma_values}
\end{figure}

The paper is structured as follows. In Section \ref{sectiongenfunctions}, we construct the generating function for the sequence $b_t^*(n)$, and we present the generating function for $a_t^*(n)$, which was previously constructed in \cite{AAOS}. In Section \ref{sectiongenfnasympt} and \ref{sectionasymptotics}, we prove Theorem \ref{theorem2}(1) and (2). Section \ref{sectiongenfnasympt} is devoted to asymptotics for the generating functions, and Section \ref{sectionasymptotics} builds on these results to produce asymptotics for $a_t^*(n)$ and $b_t^*(n)$ using Wright's Circle Method. In Section \ref{sectionevaluatingbeta}, we evaluate $\beta^*_t$. In Section \ref{sectionconj}, we provide a proof of Theorem \ref{theorem2}(3). Finally, in Section \ref{sectionlimits}, we provide a proof of Theorem \ref{theorem3}. 

\section*{Acknowledgments}

The author thanks Ken Ono for suggesting this project and for helpful discussions related to this paper. The author is also grateful to William Craig and Kathrin Bringmann for valuable comments. A portion of this paper was written while the author was visiting the Max Planck Institute for Mathematics, whose hospitality she acknowledges. This research was partially supported by the Raven Fellowship and the Ingrassia Family Echols Scholars Research Grant. 

\section{Generating Functions for $a^*_t(n)$ and $b^*_t(n)$}\label{sectiongenfunctions}

In this section, we establish generating functions for $a^*_t(n)$ and $b^*_t(n)$. Define $A^*_t(q)$ and $B^*_t(q)$ as \begin{equation} A^*_t(q) := \sum_{n \geq 1} a^*_t(n) q^n \quad \text{and} \quad B^*_t(q) := \sum_{n \geq 1} b^*_t(n) q^n. \end{equation}

Recall the standard notation for the $q$-Pochhammer symbol and $q$-binomial coefficient: 
\begin{equation*} (x;q)_0 := 1, \end{equation*}
\begin{equation*} (x;q)_n := (1-x)(1-xq) \cdots (1-xq^{n-1}), \end{equation*}
\begin{equation*} (x;q)_\infty := \lim_{n \rightarrow \infty} (x;q)_n, \end{equation*}
\begin{equation*}
    \binom{n}{k}_q := \frac{(q;q)_n}{(q;q)_k(q;q)_{n-k}}.
\end{equation*}

Recent work of Amdeberhan, Andrews, Ono, and Singh computes $A_t^*(q)$. 
\begin{thm}[\protect{\cite[Theorem~2.1]{AAOS}}]\label{aaos}  The following are true as formal power series.
\begin{enumerate}
\item If $t$ is even, we have that 
\begin{equation*} A^*_t(q) = \frac{tq^{2t}}{1-q^{2t}}(-q; q^2)_\infty. \end{equation*} 
\item If $t$ is odd, we have that 
\begin{equation*} A^*_t(q) = \frac{q^{t}(1 + (t-1)q^t + tq^{2t})}{(1-q^{2t})(1+q^t)} (-q; q^2)_\infty. \end{equation*} 
\end{enumerate}
\end{thm}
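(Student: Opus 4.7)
The natural route is the $t$-core/$t$-quotient decomposition of partitions, specialized to the self-conjugate case.

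Recall that every partition $\lambda$ bijects with a pair $(\mu_t,\vec\nu)$ where $\mu_t$ is its $t$-core and $\vec\nu=(\nu^{(0)},\ldots,\nu^{(t-1)})$ its $t$-quotient, with $|\lambda|=|\mu_t|+t\sum_i |\nu^{(i)}|$. The key identification, via the $t$-abacus, is that cells of $\lambda$ of hook length exactly $t$ correspond to corner cells of the $\nu^{(i)}$; hence
\begin{equation*}
n_t(\lambda)=\sum_{i=0}^{t-1}d(\nu^{(i)}),
\end{equation*}
where $d(\nu)$ denotes the number of distinct parts (equivalently, corners) of $\nu$. Conjugation acts by $\mu_t(\lambda')=\mu_t(\lambda)'$ and $\nu^{(i)}(\lambda')=(\nu^{(t-1-i)}(\lambda))'$, so $\lambda$ is self-conjugate iff $\mu_t$ is a self-conjugate $t$-core and $\nu^{(i)}=(\nu^{(t-1-i)})'$ for every $i$.

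For even $t$, the quotient is parametrized by a free $(t/2)$-tuple $(\nu^{(0)},\ldots,\nu^{(t/2-1)})$; since conjugation preserves corner counts, $n_t(\lambda)=2\sum_{i=0}^{t/2-1}d(\nu^{(i)})$. Writing $C_t^{sc}(q)$ for the generating function of self-conjugate $t$-cores, $D(q):=\sum_\nu d(\nu)\,q^{|\nu|}=\frac{q}{(1-q)(q;q)_\infty}$, and $P(q):=1/(q;q)_\infty$, the assembly yields
\begin{equation*}
A_t^*(q)=t\,C_t^{sc}(q)\,D(q^{2t})\,P(q^{2t})^{t/2-1}.
\end{equation*}
Substituting the Garvan--Kim--Stanton product $C_t^{sc}(q)=(-q;q^2)_\infty\,(q^{2t};q^{2t})_\infty^{t/2}$ makes the $(q^{2t};q^{2t})_\infty$ factors telescope, collapsing the expression to $\frac{tq^{2t}}{1-q^{2t}}(-q;q^2)_\infty$.

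For odd $t$, the central component $\nu^{((t-1)/2)}$ must itself be self-conjugate, while the remaining $t-1$ components form $(t-1)/2$ pairs of freely chosen partitions. Splitting $n_t(\lambda)$ into middle and paired contributions gives
\begin{equation*}
A_t^*(q)=C_t^{sc}(q)\,P(q^{2t})^{(t-1)/2}\bigl[\tilde D(q^t)+(t-1)\,\tilde P(q^t)\,\tfrac{q^{2t}}{1-q^{2t}}\bigr],
\end{equation*}
where $\tilde P(q):=(-q;q^2)_\infty$ and $\tilde D(q):=\sum_{\nu\text{ s.c.}}d(\nu)\,q^{|\nu|}$. Inserting the odd-$t$ core formula (carrying an additional $(-q^t;q^{2t})_\infty$ factor) and a workable closed form for $\tilde D$ derived from the bijection with distinct odd parts should simplify to the claimed $\frac{q^t(1+(t-1)q^t+tq^{2t})}{(1-q^{2t})(1+q^t)}(-q;q^2)_\infty$.

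The main obstacle is the odd-case bookkeeping, and specifically evaluating $\tilde D(q)$: the number of distinct parts of a self-conjugate partition is not a standard statistic on its image under the distinct-odd-parts bijection, so computing $\tilde D$ in closed form requires care. A useful sanity check is the partial-fraction identity
\begin{equation*}
\frac{q^t(1+(t-1)q^t+tq^{2t})}{(1-q^{2t})(1+q^t)}=\frac{tq^{2t}}{1-q^{2t}}+\frac{q^t}{(1+q^t)^2},
\end{equation*}
showing that the odd-$t$ answer equals the even-$t$ answer plus a correction $\frac{q^t}{(1+q^t)^2}(-q;q^2)_\infty$; that correction must combinatorially capture the diagonal $t$-hooks (available only for odd $t$) together with the effect of the self-conjugate middle quotient. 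A fully combinatorial alternative---partitioning $n_t(\lambda)$ into diagonal vs.\ off-diagonal hooks---makes this correction transparent on the $q$-series side but faces the same algebraic endgame.
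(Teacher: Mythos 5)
First, note that the paper does not prove this statement at all: Theorem \ref{aaos} is imported verbatim from \cite[Theorem~2.1]{AAOS}, so there is no in-paper argument to compare against. Judged on its own terms, your $t$-core/$t$-quotient strategy is sound and is a legitimate route. The two ingredients you invoke are both standard and correctly stated: cells of hook length exactly $t$ biject with removable cells (corners) of the quotient components, so $n_t(\lambda)=\sum_i d(\nu^{(i)})$; and conjugation reverses and transposes the quotient. Your even-$t$ computation then closes completely: with $D(q)=\frac{q}{(1-q)(q;q)_\infty}$ and the Garvan--Kim--Stanton product $(-q;q^2)_\infty(q^{2t};q^{2t})_\infty^{t/2}$ for self-conjugate $t$-cores (which is in any case forced by consistency with $\sum_{\lambda\in\mathcal{SC}}q^{|\lambda|}=(-q;q^2)_\infty$ under the core--quotient bijection), the infinite products cancel exactly and yield $\frac{tq^{2t}}{1-q^{2t}}(-q;q^2)_\infty$.

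The odd case, however, contains a genuine gap that you yourself flag: the argument is not complete without a closed form for $\tilde D(q)=\sum_{\nu\ \mathrm{s.c.}}d(\nu)q^{|\nu|}$, and "should simplify" is not a proof. For the record, after cancelling the core and quotient products your assembly reduces to $A_t^*(q)=(-q;q^2)_\infty\bigl[\tilde D(q^t)/(-q^t;q^{2t})_\infty+(t-1)\frac{q^{2t}}{1-q^{2t}}\bigr]$, so by your own partial-fraction identity the missing lemma is precisely
\begin{equation*}
\sum_{\nu\ \mathrm{s.c.}}d(\nu)\,q^{|\nu|}=(-q;q^2)_\infty\left(\frac{q^2}{1-q^2}+\frac{q}{(1+q)^2}\right),
\end{equation*}
which I have checked against the coefficients through $q^8$. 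This identity is true and provable (e.g.\ by pairing off-diagonal corners of a self-conjugate partition under transposition and tracking the at-most-one diagonal corner, or by computing, for each $k$, the generating function of self-conjugate partitions containing the part $k$), but until it is established the odd case of the theorem is not proved. Since the result is needed only as an input here, the economical fix is simply to cite \cite{AAOS} as the paper does; if you want a self-contained proof, supply the lemma above.
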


It remains to produce a formula for $B^*_t(q)$. We follow a method described in \cite{BH} to do so.

\begin{thm}\label{bgenfun} The following identities are true as formal power series. For $t \geq 2$ even, we have 
\begin{align*}
B^*_t(q) &= (-q, q^2)_\infty \sum_{k = 0}^{\lfloor (t-4)/6 \rfloor} q^{t+4k^2+4k}  \binom{\frac{t-2k-2}{2}}{2k+1}_{q^2} \sum_{m \geq 0} \frac{q^{2m(2k+2)}}{(-q^{m+1}, q^2)_{\frac{t-2k}{2}}} \\
&+ (-q, q^2)_\infty \sum_{k=0}^{\lfloor (t-2)/6 \rfloor} q^{t+ 4k^2 + 4k + 1}  \binom{\frac{t-2k-2}{2}}{2k}_{q^2} \sum_{m \geq 0} \frac{q^{2m(2k+1)}}{(-q^{m+2}, q^2)_{\frac{t-2k}{2}}}.
\end{align*}
For $t \geq 1$ odd, we have that 
\begin{align*} 
B^*_t(q) 
&= (-q, q^2)_\infty \sum_{k = 0}^{\lfloor (t-1)/6 \rfloor} q^{t+4k^2-4k}  \binom{\frac{t-2k-1}{2}}{2k}_{q^2} \sum_{m \geq 0} \frac{q^{2m(2k+1)}}{(-q^{m+1}, q^2)_{\frac{t-2k+1}{2}}} \\
&+ (-q, q^2)_\infty \sum_{k=0}^{\lfloor (t-5)/6 \rfloor} q^{t+ 4k^2 + 6k + 3}  \binom{\frac{t-2k-3}{2}}{2k+1}_{q^2} \sum_{m \geq 0} \frac{q^{2m(2k+2)}}{(-q^{m+2}, q^2)_{\frac{t-2k-1}{2}}}.
\end{align*}

\end{thm}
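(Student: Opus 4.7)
The plan is to compute $B_t^*(q)$ combinatorially by a direct hook-by-hook analysis, following the method of \cite{BH}. I will write
\[
B_t^*(q) \;=\; \sum_{\lambda \in \mathcal{DO}} n_t(\lambda) \, q^{|\lambda|} \;=\; \sum_{(\lambda,\,(i,j))} q^{|\lambda|},
\]
where the rightmost sum runs over pairs of a partition $\lambda \in \mathcal{DO}$ together with a cell $(i,j)$ of its Ferrers--Young diagram having hook length exactly $t$. The outer organization is by the arm--leg data $(a, l) := (\lambda_i - j,\, \lambda'_j - i)$ with $a + l = t - 1$. Since every part of $\lambda$ is odd, the equation $\lambda_i = j + a$ forces $j$ and $a$ to have opposite parities, so each $(a, l)$ is compatible with exactly one parity of $j$.

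For fixed $(a, l)$ and fixed $j$, the partition $\lambda$ decomposes into three independently chosen subpartitions whose generating functions multiply: an \emph{upper} piece $\lambda_1 > \cdots > \lambda_{i-1}$ consisting of distinct odd parts $> j + a$, contributing $(-q^{j+a+2};q^2)_\infty$; a \emph{middle} block $\lambda_i > \cdots > \lambda_{i+l}$ determined by $\lambda_i = j + a$ together with an $l$-subset of the odd integers in $[j,\,j+a-1]$ (which become $\lambda_{i+1},\ldots,\lambda_{i+l}$), contributing $q^{\lambda_i}$ times a $q^2$-binomial $\binom{k}{l}_{q^2}$ (with $k$ the number of odd integers in $[j,\,j+a-1]$) times a $q$-power recording the minimal possible sum of the selected parts; and a \emph{lower} piece $\lambda_{i+l+1} > \cdots$ of distinct odd parts $< j$, contributing a truncated Pochhammer $(-q;q^2)_{?}$ whose length depends on the parity of $j$.

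Substituting $a = 2k$ or $a = 2k+1$ according to parity and using $(-q^{j+a+2};q^2)_\infty = (-q;q^2)_\infty / (-q;q^2)_{(j+a+1)/2}$ extracts $(-q;q^2)_\infty$ as a global prefactor and combines the residual Pochhammer quotient with the lower piece into a single finite Pochhammer in the denominator. Reparametrizing $j$ by a nonnegative integer $m$ (with an offset depending on the parity of $j$) converts the sum over $j$ into the inner $m$-sum of the theorem, while the substitution in $a$ converts the outer sum into the $k$-sum. The bounds $\lfloor (t - \cdot)/6 \rfloor$ on $k$ come from the simultaneous constraints $l \geq 0$ and $l \leq k$, the latter ensuring the $q^2$-binomial is nonzero. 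The four displayed summands (two for even $t$, two for odd $t$) correspond to the four combinations of the parities of $t$ and $a$.

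The main technical obstacle is the bookkeeping in this final reindexing: tracking the contributions to the $q$-exponent from the middle block, the Pochhammer quotient, and the offset of $j$, and verifying that they assemble into exactly the stated $q^{t + 4k^2 \pm \cdots + 2m(2k+\cdot)}$; likewise verifying that the Pochhammer denominator collects to $(-q^{m+\cdot};q^2)_{(t-2k\pm\cdot)/2}$ and that the binomial $\binom{(t-2k-\cdot)/2}{2k+\cdot}_{q^2}$ matches in each case. Once the combinatorial decomposition is in place, the remaining work is delicate but routine algebra, with care needed at the boundary (the smallest allowed $j$ in each parity class) to confirm that every $t$-hook is counted exactly once.
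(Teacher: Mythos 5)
Your proposal follows essentially the same route as the paper: it fixes a cell with hook length $t$, records its arm, leg, and coarm/column data, splits the Ferrers--Young diagram into independent regions (your upper/middle/lower pieces are exactly the paper's regions $C$, $B\cup D$, and $A$), multiplies the corresponding generating functions, extracts $(-q;q^2)_\infty$ from the quotient of Pochhammer symbols, and reindexes by the parities of $t$ and the arm to obtain the four displayed sums. The outline is correct, and the remaining bookkeeping you defer is the same routine computation the paper carries out.
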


\noindent \textit{Proof.} Let $\lambda$ be a partition. For each box $v \in \lambda$, define the arm length of $v$ to be the number of boxes $x$ such that $x$ lies to the right of $v$. Similarly, we define the leg length (resp. coarm length, coleg length) to be the number of boxes $x$ below $v$ (resp. to the left of $v$, above $v$). We denote these quantities by $\arm(\lambda, v) := j$, $\leg(\lambda, v) := \ell$, $\coarm(\lambda, v) := m$, and $\coleg(\lambda, v) := g$. See Figure \ref{fig:armlegso-on}. 

\vspace{-0.75em}
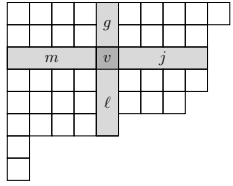
\begin{figure}[h]
    \centering
\resizebox{3cm}{!}{
\begin{tikzpicture}

\fill[gray!30] (4, 2) rectangle (5, 5);
\fill[gray!30] (4, 6) rectangle (5, 8);
\fill[gray!30] (5, 5) rectangle (9, 6);
\fill[gray!60] (4, 5) rectangle (5, 6);
\fill[gray!30] (0, 5) rectangle (4, 6);

\draw (0, 7) rectangle (1, 8);
\draw (1, 7) rectangle (2, 8);
\draw (2, 7) rectangle (3, 8);
\draw (3, 7) rectangle (4, 8);
\draw (5, 7) rectangle (6, 8);
\draw (6, 7) rectangle (7, 8);
\draw (7, 7) rectangle (8, 8);
\draw (8, 7) rectangle (9, 8);
\draw (9, 7) rectangle (10, 8);

\draw (0, 5) rectangle (4, 6);
\draw (4, 5) rectangle (5, 6);
\draw (5, 5) rectangle (9, 6);

\draw (0, 6) rectangle (1, 7);
\draw (1, 6) rectangle (2, 7);
\draw (2, 6) rectangle (3, 7);
\draw (3, 6) rectangle (4, 7);
\draw (5, 6) rectangle (6, 7);
\draw (6, 6) rectangle (7, 7);
\draw (7, 6) rectangle (8, 7);
\draw (8, 6) rectangle (9, 7);

\draw (0, 4) rectangle (1, 5);
\draw (1, 4) rectangle (2, 5);
\draw (2, 4) rectangle (3, 5);
\draw (3, 4) rectangle (4, 5);
\draw (5, 4) rectangle (6, 5);
\draw (6, 4) rectangle (7, 5);
\draw (7, 4) rectangle (8, 5);
\draw (8, 4) rectangle (9, 5);

\draw (0, 3) rectangle (1, 4);
\draw (1, 3) rectangle (2, 4);
\draw (2, 3) rectangle (3, 4);
\draw (3, 3) rectangle (4, 4);
\draw (5, 3) rectangle (6, 4);
\draw (6, 3) rectangle (7,4);
\draw (7,3) rectangle (8, 4); 

\draw (0, 2) rectangle (1, 3);
\draw (1, 2) rectangle (2, 3);
\draw (2, 2) rectangle (3, 3);
\draw (3, 2) rectangle (4, 3);
\draw (4, 2) rectangle (5, 6);

\draw (0,1) rectangle (1,2);
\draw (0,0) rectangle (1,1); 
\draw (4,8) -- (5,8);
\draw (4,2) -- (5,2);
\draw (0,5) -- (0,6);
\draw (9,5) -- (9,6);

\node at (7, 5.5) {\fontsize{20}{22}\selectfont $j$};
\node at (2, 5.5) {\fontsize{20}{22}\selectfont $m$};
\node at (4.5, 7) {\fontsize{20}{22}\selectfont $g$};
\node at (4.5, 5.5) {\fontsize{20}{22}\selectfont $v$};
\node at (4.5, 3.5) {\fontsize{20}{22}\selectfont $\ell$};

\end{tikzpicture}}
    \caption{Arm, coarm, leg, and coleg length of  $\lambda = (10,9,9,9,8,5,1,1)$}
    \label{fig:armlegso-on}
\end{figure}

\vspace{-1em}
Consider the following division of the diagram into four regions, ``cut out" by the arm, coarm, and leg, labeled $A$, $B$, $C$, and $D$, as shown in Figure \ref{fig:ABCD}. In particular, $D$ contains the regions consisting of $v$, the arm, the coarm, and the leg, while $C$ contains the coleg.

\begin{figure}[h]
    \centering

\resizebox{3cm}{!}{
\begin{tikzpicture}

\draw (0, 7) rectangle (1, 8);
\draw (1, 7) rectangle (2, 8);
\draw (2, 7) rectangle (3, 8);
\draw (3, 7) rectangle (4, 8);
\draw (4, 7) rectangle (5, 8);
\draw (5, 7) rectangle (6, 8);
\draw (6, 7) rectangle (7, 8);
\draw (7, 7) rectangle (8, 8);
\draw (8, 7) rectangle (9, 8);
\draw (9, 7) rectangle (10, 8);

\draw (4,2) rectangle (5,5);
\draw (0, 5) rectangle (4, 6);
\draw (4,5) rectangle (5,6);
\draw (5, 5) rectangle (9, 6);
\draw (4,6) rectangle (4,8);

\draw (0, 6) rectangle (1, 7);
\draw (1, 6) rectangle (2, 7);
\draw (2, 6) rectangle (3, 7);
\draw (3, 6) rectangle (4, 7);
\draw (5, 6) rectangle (6, 7);
\draw (6, 6) rectangle (7, 7);
\draw (7, 6) rectangle (8, 7);
\draw (8, 6) rectangle (9, 7);

\draw (0, 4) rectangle (1, 5);
\draw (1, 4) rectangle (2, 5);
\draw (2, 4) rectangle (3, 5);
\draw (3, 4) rectangle (4, 5);
\draw (4, 4) rectangle (5, 5);
\draw (5, 4) rectangle (6, 5);
\draw (6, 4) rectangle (7, 5);
\draw (7, 4) rectangle (8, 5);
\draw (8, 4) rectangle (9, 5);

\draw (0, 3) rectangle (1, 4);
\draw (1, 3) rectangle (2, 4);
\draw (2, 3) rectangle (3, 4);
\draw (3, 3) rectangle (4, 4);
\draw (5, 3) rectangle (6, 4);
\draw (6, 3) rectangle (7,4);
\draw (7,3) rectangle (8, 4); 

\draw (0, 2) rectangle (1, 3);
\draw (1, 2) rectangle (2, 3);
\draw (2, 2) rectangle (3, 3);
\draw (3, 2) rectangle (4, 3);
\draw (4, 2) rectangle (5, 6);

\draw (0,1) rectangle (1,2);
\draw (0,0) rectangle (1,1); 

\fill[gray!20] (0, 0) rectangle (1, 2);
\fill[gray!20] (0, 2) rectangle (5, 5.5);
\fill[gray!20] (0,5) rectangle (9,6);
\fill[gray!20] (5,3) rectangle (8,4.5);
\fill[gray!20] (5,4) rectangle (9,5);
\fill[gray!20] (0,6) rectangle (9,7.5);
\fill[gray!20] (0,7) rectangle (10,8);

\draw[dashed] (4,2) rectangle (5,5);
\draw[dashed] (0, 5) rectangle (4, 6);
\draw[dashed] (4,6) rectangle (5,8);
\draw[dashed] (5,5) -- (5,6);

\draw[thick] (0, 0) rectangle (1, 2);
\draw[thick] (0,2) -- (5,2);
\draw[thick] (5,2) -- (5,5);
\draw[thick] (5,5) -- (9,5);
\draw[thick] (0,6) -- (9,6);
\draw[thick] (5,3) -- (8,3);
\draw[thick] (8,3) -- (8,4);
\draw[thick] (8,4) -- (9,4);
\draw[thick] (9,4) -- (9,7);
\draw[thick] (9,7) -- (10,7);
\draw[thick] (10,7) -- (10,8);
\draw[thick] (9,8) -- (10,8);
\draw[thick] (0,0) -- (0,8);
\draw[thick] (0,8) -- (10,8);

\node at (0.5,1) {\fontsize{20}{22}\selectfont $A$};
\node at (3,4) {\fontsize{20}{22}\selectfont $D$};
\node at (6.75,4) {\fontsize{20}{22}\selectfont $B$};
\node at (6,7) {\fontsize{20}{22}\selectfont $C$};

\end{tikzpicture}}

    \caption{Regions $A$, $B$, $C$, and $D$ of $\lambda = (10,9,9,9,8,5,1,1)$}
    \label{fig:ABCD}
\end{figure}
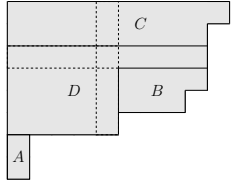

\vspace{-0.5em}
Fix a triple of integers $(j, \ell, m)$, and let $f(j, \ell, m;n)$ denote the number of ordered pairs $(\lambda, v)$ such that $v \in \lambda$, $\lambda \in \mathcal{DO}(n)$, $\arm(\lambda, v) = j $, $\leg(\lambda, v) = \ell$, and $\coarm(\lambda, v) = m$. Consider the generating function $F(j, \ell, m; q) = \sum_n f(j, \ell, m; n) q^n$. We produce a formula for $F(j, \ell, m; q)$, depending on the parity of $m$. Let $F(R; j, \ell, m, q)$ denote the generating function for the number of partitions exhibited by the region $R \in \{A, B, C, D \}$. First, suppose $m = 2m'$ for $m' \in \mathbb{Z}_{\geq 0}$. Since $m+j+1$ is odd, $j = 2j'$ is even with $j' \in \mathbb{Z}_{\geq 0}$. Using routine $q$-series manipulations (see \cite{Andrews}), we find
\begin{align*}
F(A; 2j', \ell, 2m'; q) &= (1+q)(1+ q^3)(1+q^5) \cdots (1+q^{2m'-1}) = (-q; q^2)_{m'}
\end{align*}
\begin{align*}
F(B; 2j',\ell, 2m'; q) &= \binom{j'}{\ell}_{q^2} q^{\ell(\ell - 1)} \\
F(C; 2j', \ell, 2m'; q), &= (1+q^{j+m+3})(1+q^{j+m+5}) \cdots = \frac{(-q; q^2)_\infty}{(-q; q^2)_{j'+m'+1}}, \\
F(D, 2j', \ell, 2m'; q) &= q^{(m+1)(\ell + 1) + j} = q^{(2m'+1)(\ell+1) + 2j'}, \end{align*}
and we obtain the formula
\begin{align*} F(2j', \ell, 2m'; q) &= F(A; 2j', \ell, 2m'; q) F(B; 2j', \ell, 2m';  q) F(C; 2j', \ell, 2m'; q) F(D; 2j', \ell, 2m'; q)
\\
&= q^{(2m'+1)(\ell +1) + 2j' + \ell(\ell -1)} \binom{j'}{\ell}_{q^2} \frac{(-q; q^2)_\infty}{(-q^{2m'+1};q^2)_{j'+1}}. \end{align*}
Similarly, suppose $m = 2m' + 1$ and $j = 2j' + 1$ for $j', m' \in \mathbb{Z}_{\geq 0}$. Analogously to above, we get
\begin{equation*} F(2j'+1, \ell, 2m'+1; q) = q^{(2m'+2)(\ell+1)+2j'+1+\ell^2} \binom{j'}{\ell}_{q^2} \frac{(-q;q^2)_\infty}{(-q^{2m'+3};q^2)_{j'+1}}.
\end{equation*}
\normalsize 

We see that $F(j, \ell, m; q) = 0$ if $j' < \ell$ since then $\binom{j'}{\ell}_{q^2} = 0$. Thus, if $m$ and $j$ are even, $t = \ell + 2j' + 1$ implies that $\ell \leq \frac{t-1}{3}$, where $\ell$ has a different parity than $t$. On the other hand, when $m$ is odd, $t = \ell + 2j' + 2$ implies that $\ell \leq \frac{t-2}{3}$, where $\ell \equiv t \pmod{2}$. Thus, when $t$ is even, we obtain the following. Here, we reindex by $k$, with $\ell = 2k$ and $\ell = 2k+1$, depending on $\ell \pmod{2}$.
\begin{align*} B^*_t(q) &= \sum_{\substack{\ell \leq \lceil t/3 \rceil - 1  \\ \ell \equiv 1 \pmod{2}}} \  \sum_{m' \geq 0} F(t-\ell-1, \ell, 2m'; q) + \sum_{\substack{\ell \leq \lceil (t-1)/3 \rceil - 1 \\ \ell \equiv 0 \mod{2}}} \ \sum_{m' \geq 0} F(t-\ell-1, \ell, 2m'+1;q) \\
&= (-q; q^2)_\infty \sum_{k = 0}^{\lfloor (t-4)/6 \rfloor} q^{t+4k^2+4k}  \binom{\frac{t-2k-2}{2}}{2k+1}_{q^2} \sum_{m' \geq 0} \frac{q^{2m'(2k+2)}}{(-q^{2m'+1}; q^2)_{\frac{t-2k}{2}}} \\
&+ (-q; q^2)_\infty \sum_{k=0}^{\lfloor (t-2)/6 \rfloor} q^{t+ 4k^2 +4k + 1}  \binom{\frac{t-2k-2}{2}}{2k}_{q^2} \sum_{m' \geq 0} \frac{q^{2m'(2k+1)}}{(-q^{2m'+3}; q^2)_{\frac{t-2k}{2}}}.
\end{align*}
\normalsize

Similarly, when $t$ is odd, we get 
\begin{align*} B^*_t(q) 
&= (-q; q^2)_\infty \sum_{k = 0}^{\lfloor (t-1)/6 \rfloor} q^{t+4k^2-4k}  \binom{\frac{t-2k-1}{2}}{2k}_{q^2} \sum_{m' \geq 0} \frac{q^{2m'(2k+1)}}{(-q^{2m'+1}; q^2)_{\frac{t-2k+1}{2}}} \\
&+ (-q; q^2)_\infty \sum_{k=0}^{\lfloor (t-5)/6 \rfloor} q^{t+ 4k^2 + 6k + 3}  \binom{\frac{t-2k-3}{2}}{2k+1}_{q^2} \sum_{m' \geq 0} \frac{q^{2m'(2k+2)}}{(-q^{2m'+3}; q^2)_{\frac{t-2k-1}{2}}}. \qquad \qquad \qquad \qquad \qed
\end{align*}

\section{Asymptotics for $A^*_t(q)$ and $B^*_t(q)$}\label{sectiongenfnasympt}

To prove Theorem \ref{theorem2}(1) and (2), we need strong asymptotic properties for $A^*_t(q)$ and $B^*_t(q)$ with $q = e^{-z}$, as $z \rightarrow 0$ in certain regions. We prove the following.

\begin{prop}\label{Atasymptotics} As $z \rightarrow 0$ with $\Real(z) > 0$, we have that 
\begin{equation*} A^*_t(q) \sim \frac{1}{2} \frac{(-q; q^2)_\infty}{z}. \end{equation*}
\end{prop}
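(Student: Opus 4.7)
The plan is to apply the explicit formulas from Theorem~\ref{aaos} and Taylor expand the rational prefactor of $(-q;q^2)_\infty$ in each parity case. Since $(-q;q^2)_\infty$ is a common factor on both sides of the claimed asymptotic, the statement reduces to showing that the remaining rational function in $q = e^{-z}$ is asymptotic to $\frac{1}{2z}$ as $z \to 0$ with $\Real(z) > 0$.

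When $t$ is even, Theorem~\ref{aaos}(1) gives the prefactor $\frac{tq^{2t}}{1-q^{2t}}$. Substituting $q = e^{-z}$, the numerator satisfies $tq^{2t} = te^{-2tz} = t + O(z)$, while $1 - q^{2t} = 1 - e^{-2tz} = 2tz + O(z^2)$. Therefore
\begin{equation*}
\frac{tq^{2t}}{1-q^{2t}} = \frac{t + O(z)}{2tz + O(z^2)} \sim \frac{1}{2z},
\end{equation*}
which yields the desired asymptotic in this case.

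When $t$ is odd, Theorem~\ref{aaos}(2) gives the prefactor $\frac{q^t(1+(t-1)q^t+tq^{2t})}{(1-q^{2t})(1+q^t)}$. The numerator tends to $1 + (t-1) + t = 2t$ as $z \to 0$, while for the denominator one has $1 - q^{2t} \sim 2tz$ and $1 + q^t \to 2$, so the denominator is $\sim 4tz$. Again the quotient is $\sim \frac{2t}{4tz} = \frac{1}{2z}$, giving the asymptotic.

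No serious obstacle is anticipated: the entire argument is an application of the elementary estimate $1 - e^{-w} = w + O(w^2)$ as $w \to 0$, which is valid uniformly in any sector avoiding the imaginary axis and in particular in the right half-plane $\Real(z) > 0$. The only observation worth flagging is that the two parity cases, which superficially look quite different, conspire to yield the same leading term $\frac{1}{2z}$ independent of $t$; this is exactly what forces the $t$-independence in the asymptotic for $a_t^*(n)$ later in Theorem~\ref{theorem2}(1).
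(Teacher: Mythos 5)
Your proposal is correct and follows essentially the same route as the paper: both reduce to Theorem \ref{aaos} and extract the leading term $\frac{1}{2z}$ of the Laurent expansion of the rational prefactor at $z=0$, the paper by quoting the expansion \eqref{laur} and you by computing it directly via $1-e^{-w}=w+O(w^2)$. Your explicit verification that the even and odd cases both give $\frac{1}{2z}$ matches the leading terms recorded in \eqref{laur}.
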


\begin{prop}\label{Btasymptotics} As $z \rightarrow 0$ with $\Real(z) > 0$, we have that \begin{equation*} B_t^*(q) \sim \beta_t^* \frac{(-q;q^2)_\infty}{z}, \end{equation*}
where $\beta_t^* > 0$ is a constant.
\end{prop}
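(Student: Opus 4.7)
\textbf{Proof proposal for Proposition \ref{Btasymptotics}.} The plan is to apply the explicit formula for $B_t^*(q)$ from Theorem \ref{bgenfun}: in both the even and odd $t$ cases, one may write
\[
B_t^*(q) \;=\; (-q;q^2)_\infty \cdot G_t(q),
\]
where $G_t(q)$ is a finite sum (over the index $k$) of terms of the shape
\[
q^{E(k)} \binom{N_k}{M_k}_{q^2} S_{c_k,d_k,N_k}(q), \qquad S_{c,d,N}(q) := \sum_{m \geq 0}\frac{q^{2mc}}{(-q^{m+d};q^2)_N}.
\]
Here $E(k)$ is a polynomial exponent, the binomial exponents $N_k,M_k$ and the integers $c_k \geq 1$, $d_k \in \{1,2\}$, $N_k \geq 1$ are read off directly from Theorem \ref{bgenfun}. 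Since all of $q^{E(k)}$ and $\binom{N_k}{M_k}_{q^2}$ tend to finite nonnegative limits as $z \to 0$ (namely $1$ and the ordinary binomial coefficient $\binom{N_k}{M_k}$, respectively), everything reduces to establishing the asymptotic $S_{c,d,N}(q) \sim I_{c,d,N}/z$ for a strictly positive constant $I_{c,d,N}$.

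The main technical step is to analyze $S_{c,d,N}(e^{-z})$ as $z \to 0$ with $\Real(z) > 0$. Writing $q = e^{-z}$, each denominator factor is $1+e^{-(m+d+2j)z}$, so that
\[
\frac{q^{2mc}}{(-q^{m+d};q^2)_N}
\;=\; \frac{e^{-2mcz}}{\prod_{j=0}^{N-1}\bigl(1+e^{-(m+d+2j)z}\bigr)}.
\]
The sum over $m$ is a Riemann sum (with spacing $z$) for the integrand $f(u) := e^{-2cu}/(1+e^{-u})^N$ on $[0,\infty)$, once one sets $u = mz$. Since $f$ is smooth, positive, and integrable, the Euler--Maclaurin formula gives
\[
z \cdot S_{c,d,N}(e^{-z}) \;\longrightarrow\; \int_0^\infty \frac{e^{-2cu}}{(1+e^{-u})^N}\,du \;=\; I_{c,d,N}
\]
as $z \to 0$ in the sector $\Real(z)>0$. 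The substitution $w=e^{-u}$ transforms this to $\int_0^1 w^{2c-1}(1+w)^{-N}\,dw$, which is manifestly positive (and, as it happens, lies in $\QQ + \QQ \log 2$, consistent with Theorem \ref{theorem2}(2), though only positivity is needed here).

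Combining these ingredients, we obtain
\[
B_t^*(q) \;\sim\; \frac{(-q;q^2)_\infty}{z} \cdot \beta_t^*,
\qquad
\beta_t^* \;=\; \sum_k \binom{N_k}{M_k} I_{c_k,d_k,N_k},
\]
where the sum is a finite sum of strictly positive terms, hence $\beta_t^* > 0$. The main obstacle is justifying the Riemann sum $\to$ integral limit uniformly in the sector $\Real(z)>0$; the cleanest route is to apply Euler--Maclaurin to the real-analytic family of integrands parametrized by $\arg z$, using the exponential decay of $f(u)$ at infinity (guaranteed by $c \geq 1$) and the smoothness of the denominator at $u=0$ to bound all error terms by $O(1)$, which is negligible against the main $1/z$ term.
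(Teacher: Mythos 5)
Your proposal is correct and follows essentially the same route as the paper: factor out $(-q;q^2)_\infty$, observe that the powers of $q$ and the $q$-binomial prefactors tend to finite limits, and apply Euler--Maclaurin summation to the inner sum over $m$ to extract a $\tfrac{1}{z}\int_0^\infty(\cdot)$ main term, yielding $\beta_t^*$ as a finite sum of manifestly positive integrals (the paper formalizes the uniformity issue you flag via the two-variable device $f_{a,b,c}(t;z)$ with $t$ later identified with $z$, citing \cite[Proposition~3.1]{CDH}). The only discrepancy is your integrand $e^{-2cu}/(1+e^{-u})^N$ versus the paper's $e^{-au}/(1+e^{-2u})^c$, which traces to a typo in the statement of Theorem \ref{bgenfun} (the Pochhammer base should be $q^{2m+1}$, not $q^{m+1}$, as the derivation and the definition of $F_{a,b,c}$ make clear) and does not affect the structure or the conclusion $\beta_t^*>0$.
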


Note that $\beta_t^*$ will be the constant that appears in Theorem \ref{theorem2}(2).

\subsection{Proof of Proposition \ref{Atasymptotics}} 
Theorem \ref{aaos} indicates the Laurent series for $A^*_t(q)/(-q;q^2)_\infty$ is
\begin{equation}\label{laur} \frac{A^*_t(q)}{(-q;q^2)_\infty} = \begin{cases} \frac{1}{2z} - \frac{t}{2} + \frac{t^2 z}{6} - \frac{t^4 z^3}{90} + \frac{t^6 z^5}{945} + O(z^6), \\ \frac{1}{2z} + \left( \frac{1}{4} - \frac{t}{2} \right) + \frac{t^2 z}{6} - \frac{t^4 z^3}{90} + \frac{t^4 z^4}{96} + \frac{t^6 z^5}{945} + O(z^6). \end{cases} \end{equation}
Regardless of the value of $t$, as $z \rightarrow 0$, we have that $\displaystyle{A^*_t(q) \sim \frac{1}{2} \frac{(-q; q^2)_\infty}{z}.}$

\subsection{Proof of Proposition \ref{Btasymptotics}}
In order to understand the asymptotic behavior of $B^*_t(q)/ (-q;q^2)_\infty$, we study the more general functions \begin{equation*} F_{a,b,c}(q) = \sum_{m \geq 0} \frac{q^{am}}{(-q^{2m+b}; q^2)_{c}}.\end{equation*} 
From Theorem \ref{bgenfun}, we have that for even $t$, as $z \rightarrow 0$,
\begin{align*}
B^*_t(q) &\sim (-q; q^2)_\infty \left( \sum_{k = 0}^{\lfloor (t-4)/6 \rfloor} \binom{\frac{t-2k-2}{2}}{2k+1} F_{2(2k+2), 1, \frac{t-2k}{2}}(q) + \sum_{k=0}^{\lfloor (t-2)/6 \rfloor} \binom{\frac{t-2k-2}{2}}{2k} F_{2(2k+1), 3, \frac{t-2k}{2}}(q) \right).
\end{align*} 
If $t$ is odd, as $z \rightarrow 0$, we have the asymptotic formula
\begin{align*} 
B^*_t(q) 
&\sim (-q, q^2)_\infty \left( \sum_{k = 0}^{\lfloor (t-1)/6 \rfloor} \binom{\frac{t-2k-1}{2}}{2k} F_{2(2k+1), 1, \frac{t-2k+1}{2}}(q) + \sum_{k=0}^{\lfloor (t-5)/6 \rfloor} \binom{\frac{t-2k-3}{2}}{2k+1} F_{2(2k+2), 3, \frac{t-2k-1}{2}}(q) \right).
\end{align*}

We use a modification of Euler-Maclaurin summation developed in \cite{BJSM} to produce asymptotics for $F_{a,b,c}(q)$. Let $B_n(x)$ denote the Bernoulli polynomials, let $\widetilde{B}_n(x) := B_n (\{ x \})$, and let $R_\Delta := \{ x + iy \vert \  |y|  \ \leq \Delta x \}$. Further, let $f$ be a holomorphic complex-variabled function in $R_\Delta$ such that $f$ and all of its derivatives decay at infinity \textit{sufficiently}, i.e. at least as fast as $|z|^{1-\varepsilon}$ for some $\varepsilon > 0$. 

\begin{prop}\label{em}{\cite[Proposition~3.1]{CDH}}
For each $N \geq 1$, as $z = x+iy \rightarrow 0$ in $R_\Delta$, we have that
\begin{align*} \sum_{m \geq 0} f((m+1)z) = \frac{1}{z} \int_0^\infty f(x) dx &- \sum_{k \geq 0} \frac{f^{(k)}(0) z^k}{(k+1)!} - \sum_{n=0}^{N-1} \frac{B_{n+1}(0)f^{(n)}(z)}{(n+1)!} z^n \\ &- \frac{(-1)^N z^{N-1}}{N!} \int_{z}^{z \infty} f^{(N)}(w) \widetilde{B}_N \left( \frac{w}{z} - 1 \right) dw, \end{align*}
when $f$ and all its derivatives have sufficient decay at infinity, where the last integral is taken along a path of fixed argument. 
\end{prop}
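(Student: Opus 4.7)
The plan is to reduce the statement to the classical (real-variable) Euler--Maclaurin expansion applied to the shifted function $h(u) := f((u+1)z)$, and then to translate the resulting identity into the stated form by a change of variables and a Taylor expansion at $0$.

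First I would recall the standard Euler--Maclaurin formula: for a smooth $h$ on $[0,\infty)$ whose derivatives decay sufficiently at infinity,
\[
\sum_{m \geq 0} h(m) = \int_0^\infty h(u)\,du - \sum_{n=0}^{N-1} \frac{B_{n+1}(0)}{(n+1)!}\,h^{(n)}(0) - \frac{(-1)^N}{N!}\int_0^\infty h^{(N)}(u)\,\widetilde{B}_N(u)\,du.
\]
This identity is the standard consequence of writing $h(m) = \int_m^{m+1} h(u)\,du - \int_m^{m+1}(u-m-1)h'(u)\,du$, iterating integration by parts on each unit interval $[m,m+1]$ via the relation $B_{n+1}'(x) = (n+1)B_n(x)$, summing over $m$, and using the decay of $h^{(k)}$ at $\infty$ to kill boundary contributions there.

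Second, specialize to $h(u) = f((u+1)z)$. Then $h^{(n)}(0) = z^n f^{(n)}(z)$, which produces exactly the boundary-correction sum in the proposition. The integral transforms via the substitution $w = (u+1)z$ taken along the ray $\arg w = \arg z$:
\[
\int_0^\infty f((u+1)z)\,du = \frac{1}{z}\int_z^{z\infty} f(w)\,dw = \frac{1}{z}\int_0^\infty f(x)\,dx - \frac{1}{z}\int_0^z f(w)\,dw,
\]
where the last equality uses contour deformation from the real axis to the ray, justified by holomorphy of $f$ in $R_\Delta$ and the assumed decay. Expanding $f$ in its Taylor series around $0$ and integrating termwise yields
\[
\frac{1}{z}\int_0^z f(w)\,dw = \sum_{k \geq 0} \frac{f^{(k)}(0)\,z^k}{(k+1)!},
\]
which accounts for the $-\sum_{k\geq 0}$ term. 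Applying the same substitution to the Bernoulli-polynomial remainder, and using $h^{(N)}(u) = z^N f^{(N)}((u+1)z)$ together with $\widetilde{B}_N(u) = \widetilde{B}_N(w/z - 1)$, converts it into
\[
-\frac{(-1)^N z^{N-1}}{N!}\int_z^{z\infty} f^{(N)}(w)\,\widetilde{B}_N\!\left(\frac{w}{z}-1\right) dw,
\]
matching the final term in the statement.

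The main technical obstacle is pushing this real-variable argument uniformly through the complex cone $R_\Delta$. Three issues must be addressed: (i) holomorphy of $f$ is needed in order to deform every contour of integration from the positive real axis onto the ray of argument $\arg z$, with the tails of these deformations controlled by the hypothesized decay $|f^{(k)}(w)| \ll |w|^{1-\varepsilon}$ and by the opening angle $\Delta$ of the cone; (ii) the termwise Taylor expansion of $\frac{1}{z}\int_0^z f(w)\,dw$ must converge uniformly on compact subsets of $R_\Delta$, which follows from holomorphy of $f$ at the origin; and (iii) the Bernoulli-polynomial remainder must be $O(z^{N-1})$ uniformly as $z \to 0$ in $R_\Delta$, which again follows from the decay hypothesis together with boundedness of $\widetilde{B}_N$. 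Once these uniform estimates are in place, the real-variable formula extends to the cone and the proposition follows.
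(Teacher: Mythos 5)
The paper does not prove this proposition; it is imported verbatim from \cite[Proposition~3.1]{CDH} (itself derived from the Euler--Maclaurin machinery of \cite{BJSM}), so there is no in-paper proof to compare against. Your reconstruction is correct and follows the standard argument behind that result: classical real-variable Euler--Maclaurin applied to $h(u)=f((u+1)z)$, the substitution $w=(u+1)z$ to produce the $\frac{1}{z}\int_z^{z\infty}$ terms, contour deformation of $\int_0^{z\infty}f$ back to the positive real axis, and termwise integration of the Taylor series of $f$ at $0$ to account for $\sum_{k\ge 0}\frac{f^{(k)}(0)z^k}{(k+1)!}$; the sign and index bookkeeping all check out (e.g.\ the $N=1,2$ cases recover $+\tfrac12 h(0)$ and $-\tfrac{1}{12}h'(0)$ correctly).
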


We apply Proposition \ref{em} to $F_{a,b,c}(q)$. 

\begin{prop} As $z \rightarrow 0$ with $\Real(z) > 0$, we have 
\begin{equation*} F_{a,b,c}(e^{-z}) \sim \frac{1}{z} \int_0^\infty \frac{e^{-ax}}{(1+e^{-2x})^c} dx. \end{equation*}
\end{prop}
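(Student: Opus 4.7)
The plan is to apply Proposition \ref{em} to the sum defining $F_{a,b,c}(e^{-z})$. The cleanest route is to compare it against the fixed test function
$$h(w) := \frac{e^{-aw}}{(1+e^{-2w})^{c}},$$
which is the natural ``unperturbed'' version of the summand: as $z\to 0$ the offsets $b$ and $2j$ inside the exponents of the Pochhammer factor disappear, and one expects the sum to behave like a Riemann-sum approximation to $\frac{1}{z}\int_0^\infty h(x)\,dx$.

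Concretely, I would first check that $h$ is holomorphic in a strip $R_\Delta$ with $\Real(w)>0$ and that $h$ together with all its derivatives decays exponentially as $\Real(w)\to\infty$ (this is immediate since $a\ge 2$ in every instance arising from Theorem \ref{bgenfun}). Proposition \ref{em} then directly yields
$$\sum_{m\ge 0}h((m+1)z) \;=\; \frac{1}{z}\int_0^\infty h(x)\,dx \;+\; O(1) \qquad \text{as } z\to 0,$$
since the Taylor sum $\sum_{k\ge 0}\frac{h^{(k)}(0)z^k}{(k+1)!}$, the polynomial piece $\sum_{n=0}^{N-1}\frac{B_{n+1}(0)h^{(n)}(z)}{(n+1)!}z^n$, and the remainder integral are all bounded as $z\to 0$ in $R_\Delta$.

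The final step, and what I view as the main obstacle, is to control the discrepancy
$$F_{a,b,c}(e^{-z}) - \sum_{m\ge 0}h((m+1)z),$$
which encodes the effect of the shifts $b+2j$ in the Pochhammer factors. For this I would compare the two summands term-by-term: the $m$-th summand of $F_{a,b,c}(e^{-z})$ equals $h((m+1)z)$ multiplied by
$$e^{az}\prod_{j=0}^{c-1}\frac{1+e^{-2(m+1)z}}{1+e^{-(2m+b+2j)z}},$$
and a direct expansion shows each of these ratios is $1+O(z)$ uniformly in $m\ge 0$ (since the exponents differ by the bounded quantity $(2-b-2j)z$). Consequently the per-term discrepancy is bounded by $Cz\,h((m+1)z)$, and summing while using $\sum_{m\ge 0}h((m+1)z)=O(1/z)$ from the previous step gives a total discrepancy of $O(1)$, which is absorbed into the claimed asymptotic. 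Positivity of the limiting constant $\int_0^\infty \tfrac{e^{-ax}}{(1+e^{-2x})^c}\,dx$ is immediate from positivity of the integrand, so this proves the proposition and in particular confirms that the constants $\beta_t^*$ in Proposition \ref{Btasymptotics} are strictly positive.
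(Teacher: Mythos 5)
Your argument is correct in substance, but it takes a genuinely different route from the paper. The paper handles the $m$-dependence of the Pochhammer factor by introducing the two-variable function $f_{a,b,c}(t;z)=e^{-az}/(-e^{-2z-bt};e^{-2t})_c$, applying Proposition \ref{em} to $\sum_m f_{a,b,c}(t;(m+1)z)$ with $t$ frozen, and only afterwards setting $t=z$; the main term is then $\tfrac1z\int_0^\infty f_{a,b,c}(t;x)\,dx$, and one passes to the limit $t\to0$ inside the integral. You instead apply Proposition \ref{em} once, to the fixed limiting function $h=f_{a,b,c}(0;\cdot)$, and control the difference $F_{a,b,c}(e^{-z})-\sum_m h((m+1)z)$ by a uniform term-by-term comparison. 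What your approach buys is the avoidance of the two-variable machinery and of the paper's (asserted rather than proved) interchange $\lim_{z\to0}\int_0^\infty f_{a,b,c}(z;x)\,dx=\int_0^\infty f_{a,b,c}(0;x)\,dx$; what it costs is that the burden shifts entirely onto your uniformity claims, which as written are only fully justified for real $z$.

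Two points there need tightening, since the proposition is invoked for complex $z$ in a cone $R_\Delta$ (Lemma \ref{LH1}). First, the ratios $\bigl(1+e^{-2(m+1)z}\bigr)/\bigl(1+e^{-(2m+b+2j)z}\bigr)$ are $1+O(z)$ uniformly in $m$ only once you check the denominators are uniformly bounded away from $0$; this is true in $R_\Delta$ (if $|\Im w|\le\pi/2$ then $\Real(e^{-w})\ge0$, and otherwise the cone condition forces $|e^{-w}|\le e^{-\pi/(2\Delta)}$), but it is not automatic for arbitrary $z$ with $\Real(z)>0$. Second, your final summation uses $\sum_m |h((m+1)z)|=O(1/|z|)$, which does not follow from the signed estimate $\sum_m h((m+1)z)=\tfrac1z\int h+O(1)$ when $z$ is complex; you should instead use the trivial bound $|h((m+1)z)|\ll e^{-a(m+1)x}$ (valid by the same denominator bound) and sum the geometric series to get $O(1/x)=O(1/|z|)$ in the cone. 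With these additions the proof is complete.
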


\begin{proof}
Let $\Delta > 0$. Let $t, z$ be complex numbers in $R_\Delta$ as in Proposition \ref{em}. Define the functions
\begin{align*} f_{a,b,c}(t;z) &:= \frac{e^{-az}}{(e^{-2z- bt};e^{-2t})_c}, \\
F_{a,b,c}(t;z) &:= \sum_{m \geq 0} f_{a,b,c}(t; mz) = f_{a,b,c}(t; 0) + \sum_{m \geq 0} f_{a,b,c}(t; (m+1)z), \end{align*} 
so that $F_{a,b,c}(z;z) = F_{a,b,c}(e^{-z})$. From Proposition \ref{em} with $t$ fixed, we obtain, for any $N \geq 1$, 
\pagebreak
\begin{align*} F_{a,b,c}(t;z) &= f_{a,b,c}(t;0) + \frac{1}{z} \int_0^\infty f_{a,b,c}(t;x) dx - \sum_{m \geq 0} \frac{f_{a,b,c}^{(m)}(t;0) z^m}{(m+1)!} - \sum_{n=0}^{N-1} \frac{B_{n+1}(0) f_{a,b,c}^{(n)}(t;z)}{(n+1)!}z^n \\ &- \frac{(-1)^N z^{N-1}}{N!} \int_z^{z\infty} f_{a,b,c}^{(N)}(t; w) \widetilde{B}_N \left( \frac{w}{z} - 1 \right) dw. \end{align*} 
$f_{a,b,c}$ is holomorphic at $t = 0$, so there are no singularities at $z = 0$, given the identification $t=z$. The only term that contributes to the principal part as $z \rightarrow 0$ is $\frac{1}{z} \int_0^\infty f_{a,b,c}(t;x) dx$, and we find
\begin{equation*} F_{a,b,c}(e^{-z}) \sim \frac{1}{z} \int_0^{\infty} f_{a,b,c}(0;x) dx = \frac{1}{z} \int_0^\infty \frac{e^{-ax}}{(1+e^{-2x})^c} \end{equation*}
since it can be shown analytically that $\lim_{z \rightarrow 0} \int_0^\infty f_{a,b,c}(z;x) dx = \int_0^\infty f_{a,b,c}(0;x) dx.$ \end{proof}

To simplify notation, define $I(a,c) := \int_0^\infty \frac{e^{-ax}}{(1+e^{-2x})^c} dx$. For $t$ even, we define
\small
\begin{equation}\label{betastarteven} \beta_t^* := \sum_{k = 0}^{\lfloor (t-4)/6 \rfloor} \binom{\frac{t-2k-2}{2}}{2k+1} I\left(2(2k+2), \frac{t-2k}{2}\right) + \sum_{k=0}^{\lfloor (t-2)/6 \rfloor}  \binom{\frac{t-2k-2}{2}}{2k} I\left(2(2k+1), \frac{t-2k}{2}\right). \end{equation}
\normalsize
For $t$ odd, we define
\small
\begin{equation}\label{betastartodd} \beta_t^* := \sum_{k = 0}^{\lfloor (t-1)/6 \rfloor} \binom{\frac{t-2k-1}{2}}{2k} I\left(2(2k+1), \frac{t-2k+1}{2}\right) + \sum_{k=0}^{\lfloor (t-5)/6 \rfloor}  \binom{\frac{t-2k-3}{2}}{2k+1} I\left(2(2k+2), \frac{t-2k-1}{2}\right). \end{equation}
\normalsize
Given this value of $\beta^*_t$, we have the desired result. 



\section{Asymptotics for $a^*_t(n)$ and $b^*_t(n)$}\label{sectionasymptotics}

We use the Ngo and Rhoade's formulation of Wright's Circle Method \cite{NgoRhoades, Wright} to produce asymptotics for $a^*_t(n)$ and $b^*_t(n)$ from those for $A^*_t(q)$ and $B^*_t(q)$ and prove Theorem \ref{theorem2}(1) and (2). 

\subsection{A Variation of Wright's Circle Method} Here, we recall a result of Ngo and Rhoades \cite{NgoRhoades}, which is a modern formulation of Wright's circle method \cite{Wright}. 

In 1971, Wright adapted Hardy and Ramanujan's circle method \cite{HardyRamanujan} to produce asymptotics for the coefficients of a $q$-series $F(q)$ which do not necessarily have a modular transformation law. $F(q)$ need only have a ``main" singularity at $q=1$ and satisfy suitable analytic properties. Given a circle $\mathcal{C}$ of radius less than $1$, we define a \textit{major arc} as the region where $F(q)$ is large, which is $C' = C \cap R_{\Delta}$, and the \textit{minor arc} as $C \setminus C'$. The integral taken over $C'$ constitutes the main term for the coefficients of $F(q)$, while the integral taken over $C \setminus C'$ constitutes the error term. 

Ngo and Rhoades \cite[Proposition 1.8]{NgoRhoades} proved the following, which demonstrates asymptotics for a wide class of functions of the form $L \xi$ where $L$ is asymptotically ``of polynomial size" and $\xi$ grows exponentially, with a primary exponential singularity at $1$. We recall this result below.

\begin{prop}\label{circlemethod}
    Let $N \in \mathbb{N}$ and $\Delta \in \mathbb{R}^+$ be fixed. Suppose that $c(n)$ are integers defined by $\sum_{n \geq 0 } c(n) q^n = L(q) \xi(q)$
    for analytic functions $L, \xi$ within the unit disk satisfying the following hypotheses for $z = x+iy$ with $x > 0$, $0 \leq |y| \leq \pi$: 
    \begin{enumerate}
        \item[(H1)] As $|z| \rightarrow 0$ in the bounded cone $R_\Delta$ as defined in Proposition \ref{em}, we have \\ 
        $\displaystyle{L(e^{-z}) \sim \frac{1}{z} \sum_{k \geq 0} a_k z^k}$
        for $a_k \in \mathbb{C}$, 
        \item[(H2)] As $|z| \rightarrow 0$ in $R_\Delta$, we have 
        $\displaystyle{ \xi(e^{-z}) = K e^{A/z} \left( 1 + O_\theta (e^{-B/z} ) \right) }$
        for $K, A \geq 0$ and $B > A$, 
        \item[(H3)] As $|z| \rightarrow 0$ outside of $R_\Delta$, we have 
        $\displaystyle{L(e^{-z}) \ll_\theta |z|^{-C}}$
        for some $C > 0$, and 
        \item[(H4)] As $|z| \rightarrow 0$ outside of $R_\Delta$, we have 
        $\displaystyle{|\xi(e^{-z})| \ll_\Delta \xi (|e^{-z}|) e^{-\varepsilon/\Real(z)}}$
        for some $\varepsilon > 0$, depending on $\Delta$. 
    \end{enumerate}
    Then as $n \rightarrow \infty$, we have that 
    \begin{equation*} c(n) = Ke^{2\sqrt{An}} n^{-1/4} \left( \sum_{r=0}^{N-1} p_r n^{-r/2} + O \left( n^{-N/2} \right) \right), \end{equation*}
    where $p_r := \sum_{j=0}^r a_j c_{j, r-j}$ with $a_j \in \mathbb{C}$ and $c_{j,r} := \displaystyle{\frac{\left( -\frac{1}{4\sqrt{A}} \right)^r \sqrt{A}^{j - \frac{1}{2}}}{2 \sqrt{\pi}} \frac{\Gamma(j + \frac{1}{2} + r)}{r! \Gamma(j + \frac{1}{2} - r)}}$.
\end{prop}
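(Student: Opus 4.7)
The plan is to evaluate the Cauchy integral
$$c(n) = \frac{1}{2\pi i} \oint \frac{L(q)\xi(q)}{q^{n+1}}\,dq$$
by a saddle-point analysis along a circle of radius slightly less than $1$. Parametrizing $q = e^{-z}$ with $z = x_0 + iy$, hypothesis (H2) shows the integrand behaves like $\frac{K}{z} e^{A/z + nz}$ up to lower-order corrections, and the saddle point of $A/z + nz$ on the positive real axis is $z = \sqrt{A/n}$. I would therefore take $x_0 = \sqrt{A/n}$ as the radius of the Hankel-like circle and split it into the major arc $C' = \{z = x_0+iy : |y| \le \Delta x_0\}$ and its complement, the minor arc.

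First I would bound the minor-arc contribution. On $C \setminus C'$, combining (H3) and (H4) gives
$$|L(e^{-z})\xi(e^{-z})| \ll |z|^{-C}\,\xi(|e^{-z}|)\,e^{-\varepsilon/x_0}.$$
Since $\xi(|e^{-z}|)/|q|^{n+1}$ on the circle is of order $e^{2\sqrt{An}}$, the minor-arc contribution carries an extra factor $e^{-\varepsilon\sqrt{n/A}}$, which is exponentially smaller than the claimed main term and hence absorbed into the $O(n^{-N/2})$ error.

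On the major arc, I would substitute the expansions from (H1) and (H2), so that the integrand becomes $K e^{A/z + nz}\sum_{k\ge 0} a_k z^{k-1} (1 + O(e^{-B/z}))$. Because $B > A$, the error term from (H2) is exponentially smaller than the main saddle contribution and goes into the remainder. For each fixed $k$, I would then extend the major arc to a full Hankel contour — the tails outside $C'$ are again suppressed by the minor-arc bound — and apply the standard integral representation of the modified Bessel function to obtain
$$\frac{1}{2\pi i} \int_{\text{Hankel}} e^{A/z + nz}\,z^{k-1}\,dz = \left(\frac{A}{n}\right)^{k/2} I_{-k}(2\sqrt{An}).$$
Finally, plugging in the large-argument expansion
$$I_\nu(w) \sim \frac{e^w}{\sqrt{2\pi w}} \sum_{r\ge 0} (-1)^r \frac{\Gamma(\nu+r+\tfrac{1}{2})}{r!\,(2w)^r\,\Gamma(\nu-r+\tfrac{1}{2})}$$
at $w = 2\sqrt{An}$, multiplying by $\left(\frac{A}{n}\right)^{k/2}$, and regrouping the resulting double sum by total power of $n^{-1/2}$ yields exactly the claimed formula with coefficients $p_r = \sum_{j=0}^{r} a_j c_{j,r-j}$ and $c_{j,r}$ as stated.

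The main technical obstacle is the bookkeeping on the major arc: justifying the truncation of the asymptotic series (H1) at order $N$ with error $O(|z|^{N-1})$ inside the integral, and justifying the replacement of $\xi(e^{-z})$ by $Ke^{A/z}$ plus a controllable tail. The saving grace is that (H2) provides a margin of $B > A$, giving genuine exponential suppression for the tail, and that the saddle-point scaling $x_0 = \sqrt{A/n}$ makes $z$ uniformly of size $n^{-1/2}$ on the major arc, so each additional factor of $z$ in the expansion contributes a clean factor of $n^{-1/2}$ in the final answer.
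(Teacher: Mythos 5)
The paper does not prove this proposition; it is quoted from Ngo--Rhoades \cite{NgoRhoades}, and your sketch correctly reproduces the standard Wright circle-method argument used there: saddle radius $x_0=\sqrt{A/n}$, minor-arc suppression via (H3)--(H4), Bessel-integral evaluation of the major arc via $\frac{1}{2\pi i}\int e^{A/z+nz}z^{k-1}\,dz=(A/n)^{k/2}I_{-k}(2\sqrt{An})$, and the large-argument expansion of $I_\nu$ regrouped by powers of $n^{-1/2}$, whose coefficients match the stated $c_{j,r}$ exactly. The one minor imprecision is that completing the major arc to a Hankel contour is justified by directly estimating the explicit function $e^{A/z+nz}z^{k-1}$ away from the saddle (where $\Real(A/z+nz)$ drops), not by reusing the minor-arc bounds on $L\xi$; this is the routine step in the cited proof.
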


In this section, we set $q := e^{-z}$ for $z = x+iy$ with $x > 0$, $0 \leq |y| \leq \pi$. Let $\xi(q):= (-q; q^2)_\infty$, $K_t(q) := A^*_t(q)/(-q;q^2)_\infty$, and $L_t(q) := B^*_t(q)/(-q;q^2)_\infty$, so we have $A_t^*(q) = K_t(q) \xi(q)$ and $B_t^*(q) = L_t(q) \xi(q)$. In the following, we bound the major and minor arcs for $\xi(q)$, $K_t(q)$, and $L_t(q)$.

\subsection{Major and Minor Arc Computations for $K_t(q)$}  Proposition \ref{Atasymptotics} implies Lemma \ref{KH1}.

\begin{lem}\label{KH1} For every $t$ and $\Delta > 0$, as $z \rightarrow 0$ in $R_\Delta$, we have
\begin{equation*} K_t(q) \sim \frac{1}{2z}. \end{equation*}
\end{lem}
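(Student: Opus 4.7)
The plan is to deduce the lemma directly from the explicit closed forms for $A^*_t(q)$ recorded in Theorem \ref{aaos} (equivalently, from the Laurent expansion \eqref{laur}). Dividing through by $(-q;q^2)_\infty$ eliminates the infinite product entirely, leaving
\[ K_t(q) = \begin{cases} \dfrac{tq^{2t}}{1-q^{2t}}, & t \text{ even}, \\[0.6em] \dfrac{q^{t}\bigl(1 + (t-1)q^t + tq^{2t}\bigr)}{(1-q^{2t})(1+q^t)}, & t \text{ odd}, \end{cases} \]
so that $K_t$ is just an elementary rational function of $q = e^{-z}$. The first step is therefore simply to observe that, after this cancellation, $K_t(e^{-z})$ is a meromorphic function of $z$ whose only poles are located at $z = \pi i k / t$ for $k \in \mathbb{Z}$ (from the factor $1-e^{-2tz}$; in the odd case the factor $(1+q^t)$ in the denominator cancels with a corresponding factor hidden in $1-q^{2t} = (1-q^t)(1+q^t)$, so no new pole appears).

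Next I would Laurent-expand at $z=0$. For $t$ even, $1 - e^{-2tz} = 2tz\bigl(1 - tz + O(z^2)\bigr)$ and $tq^{2t} = t\bigl(1 - 2tz + O(z^2)\bigr)$, so
\[ K_t(e^{-z}) = \frac{1}{2z}\bigl(1 + O(z)\bigr) = \frac{1}{2z} - \frac{t}{2} + O(z). \]
The odd case is an analogous routine expansion; in both cases one recovers exactly the Laurent series displayed in \eqref{laur}, whose leading term is $1/(2z)$.

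The final step is to upgrade the asymptotic from "$z \to 0$ with $\Real(z) > 0$" (the form used in Proposition \ref{Atasymptotics}) to "$z \to 0$ inside $R_\Delta$." This is automatic: the Laurent expansion is valid on the punctured disk $0 < |z| < \pi/t$, which strictly contains every sufficiently small portion of the cone $R_\Delta \subset \{\Real(z) > 0\}$, and on such a neighborhood the error term $K_t(e^{-z}) - \frac{1}{2z}$ is uniformly bounded. Hence $K_t(e^{-z}) \sim \frac{1}{2z}$ uniformly as $z \to 0$ in $R_\Delta$, for every $\Delta > 0$ and every $t$.

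There is no real obstacle here; the lemma is essentially a reformulation of Proposition \ref{Atasymptotics} once one divides by $\xi(q) = (-q;q^2)_\infty$. The only point requiring minor care is verifying uniformity in the cone, which follows from the meromorphic structure just described. Consequently, hypothesis (H1) of Proposition \ref{circlemethod} is verified for $L = K_t$ with $a_0 = 1/2$ and $a_k = 0$ only at leading order (higher $a_k$ are read off from \eqref{laur} when needed).
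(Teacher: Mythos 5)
Your proposal is correct and follows essentially the same route as the paper, which deduces Lemma \ref{KH1} from Proposition \ref{Atasymptotics}, itself obtained by exactly the Laurent expansion \eqref{laur} of the rational function $A^*_t(q)/(-q;q^2)_\infty$ coming from Theorem \ref{aaos}. One minor slip: in the odd case nothing actually cancels, since $1+(t-1)x+tx^2$ equals $2$ at $x=-1$, so the denominator $(1-q^t)(1+q^t)^2$ does produce (double) poles at $z=i\pi(2j+1)/t$ — but this is harmless, as those poles still lie at distance at least $\pi/t$ from the origin and your expansion and uniformity argument on the cone go through unchanged.
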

Lemma \ref{KH3} concerns the region outside of $R_\Delta$.

\begin{lem}\label{KH3} For every $t$ and $A > 0$, as $z \rightarrow 0$ outside $R_\Delta$, we have 
\begin{equation*} |K_t(q)| \ll |z|^{-1}. \end{equation*}
\end{lem}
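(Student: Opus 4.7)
The plan is to exploit the fact that Theorem \ref{aaos} presents $K_t(q) = A^*_t(q)/(-q;q^2)_\infty$ as an explicit rational function in $q$: for $t$ even, $K_t(q) = \frac{tq^{2t}}{1-q^{2t}}$, and for $t$ odd, $K_t(q) = \frac{q^t(1+(t-1)q^t + t q^{2t})}{(1-q^{2t})(1+q^t)}$. After the substitution $q = e^{-z}$, the map $z \mapsto K_t(e^{-z})$ is meromorphic on $\CC$, and all of its poles arise from zeros of $1 - q^{2t}$, which lie on the imaginary axis at the points $z \in \frac{\pi i}{t}\ZZ$. In particular, the only singularity in the disk $|z| < \pi/t$ is the simple pole at the origin, so $K_t(e^{-z})$ is holomorphic on the punctured disk $\{0 < |z| < \pi/t\}$.

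The next step is to extract the local behavior at $z=0$ from the Laurent expansion already recorded in \eqref{laur}: in both parity classes the principal part is $\frac{1}{2z}$. Consequently there exist $r \in (0, \pi/t]$ and a constant $C_t > 0$ such that
\[
\left| K_t(e^{-z}) - \frac{1}{2z} \right| \leq C_t \qquad \text{for all } 0 < |z| \leq r.
\]
Because this is a statement about a full punctured disk about the origin, it holds in \emph{every} direction of approach---in particular, it remains valid when $z \to 0$ outside the cone $R_\Delta$. From the triangle inequality we then conclude
\[
|K_t(e^{-z})| \leq \frac{1}{2|z|} + C_t \ll |z|^{-1}
\]
as $z \to 0$ outside $R_\Delta$, which is the desired estimate.

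I do not anticipate any serious obstacle in carrying this out: the whole argument is that a rational function of $q$ with a simple pole at $q=1$ behaves like $\frac{1}{2z}$ near that pole, uniformly in direction. The hypothesis ``outside $R_\Delta$'' plays no real role, because the Laurent estimate is uniform on the entire pole-free disk around $0$. The genuine difficulty in the circle-method setup lies in the companion bound for $L_t(q) = B^*_t(q)/(-q;q^2)_\infty$, whose generating function from Theorem \ref{bgenfun} is not manifestly rational; for $K_t$, however, the claim reduces to a one-line appeal to the Laurent expansion once Theorem \ref{aaos} is in hand.
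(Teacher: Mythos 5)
Your argument is correct and is essentially the paper's own proof: both rest on the observation that $K_t(e^{-z})$ has a convergent Laurent expansion at $z=0$ with principal part $\frac{1}{2z}$, after which the triangle inequality gives $|K_t(q)| \ll |z|^{-1}$ uniformly in direction. You simply supply the justification (rationality of $K_t$ in $q$ and the location of the nearest other poles at distance $\pi/t$) that the paper's one-line proof leaves implicit.
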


\noindent \begin{proof} $K_t(q)$ has a convergent Laurent series near $z \rightarrow 0$ for all $t$, as in \eqref{laur}. The term of minimum degree in the Laurent series is $\frac{1}{2z}$ in both cases. The triangle inequality yields the result. \end{proof}



\subsection{Major and Minor Arc Computations for $L_t(q)$}

Lemma \ref{LH1} follows from Proposition \ref{Btasymptotics}, which demonstrates that $K_t(q)$ is asymptotic to a rational function for every $t$.

\begin{lem}\label{LH1} For every $t$ and $\Delta > 0$ , as $z \rightarrow 0$ in $R_\Delta$, we have
\begin{equation*} L_t(q) \sim \frac{\beta_t^*}{z}. \end{equation*}
\end{lem}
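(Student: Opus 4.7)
The plan is to deduce Lemma \ref{LH1} essentially as a direct restatement of Proposition \ref{Btasymptotics}, the key point being a comparison of the regions in which the two asymptotics are asserted. Recall the definition $L_t(q) := B^*_t(q)/(-q;q^2)_\infty$ and that Proposition \ref{Btasymptotics} provides
\[ B_t^*(q) \sim \beta_t^* \, \frac{(-q;q^2)_\infty}{z} \quad \text{as } z \to 0 \text{ with } \Real(z) > 0, \]
with $\beta_t^* > 0$ given explicitly by \eqref{betastarteven}--\eqref{betastartodd}.

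The first step is to verify the inclusion $R_\Delta \setminus \{0\} \subset \{z : \Real(z) > 0\}$. By definition $R_\Delta = \{x + iy : |y| \leq \Delta x\}$, so $x \geq 0$ and $x = 0$ forces $y = 0$. Hence any sequence $z_n \to 0$ with $z_n \in R_\Delta \setminus \{0\}$ satisfies $\Real(z_n) > 0$, so Proposition \ref{Btasymptotics} applies verbatim along such a sequence.

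The second step is to divide through by $(-q;q^2)_\infty$. For $|q| < 1$ the infinite product $(-q;q^2)_\infty = \prod_{k \geq 0}(1 + q^{2k+1})$ converges to a nonzero value, so the division is valid and preserves the asymptotic relation. Thus
\[ L_t(q) = \frac{B_t^*(q)}{(-q;q^2)_\infty} \sim \frac{\beta_t^*}{z}, \]
which is the statement of Lemma \ref{LH1}. Since $\beta_t^* > 0$, the leading term is genuinely of order $z^{-1}$, confirming that $L_t(q)$ satisfies hypothesis (H1) of Proposition \ref{circlemethod} (with leading coefficient $a_0 = \beta_t^*$ rather than $\tfrac{1}{2}$) when the circle method is applied to $B_t^*(q) = L_t(q)\xi(q)$ in the next subsection.

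There is no substantive obstacle: once the cone $R_\Delta$ is recognized as a subset of the right half-plane, the lemma is an immediate consequence of Proposition \ref{Btasymptotics}. The only reason to separate it out as its own lemma is notational convenience for invoking the Ngo--Rhoades framework of Proposition \ref{circlemethod}, which requires the auxiliary function $L_t(q)$ in the factorization $B_t^*(q) = L_t(q)\xi(q)$ to satisfy a hypothesis of the form $L_t(e^{-z}) \sim \sum_{k \geq 0} a_k z^{k-1}$ along cones.
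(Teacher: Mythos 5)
Your proposal is correct and matches the paper's approach: the paper likewise treats Lemma \ref{LH1} as an immediate consequence of Proposition \ref{Btasymptotics}, since $L_t(q) = B_t^*(q)/(-q;q^2)_\infty$ and the cone $R_\Delta$ lies in the right half-plane. Your added checks (the inclusion $R_\Delta\setminus\{0\}\subset\{\Real(z)>0\}$ and the nonvanishing of the infinite product) are sound elaborations of what the paper leaves implicit.
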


\noindent We now bound $|L_t(q)|$ outside of $R_\Delta$. This follows from the fact that $L_t^*(q)$ is nearly rational, where we keep track of the obstruction to rationality in Proposition \ref{nearlyrational}. We follow a method of \cite{CDH}.

\begin{prop}\label{nearlyrational} We prove the following. 
\begin{enumerate}
    \item When $t$ is a multiple of $3$, $L_t^*(q)$ is rational in $q$. 
    \item When $t$ is not a multiple of $3$, we can express $L^*_t(q)$ as
\begin{equation*} L^*_t(q) = \widetilde{L}^*_t(q) - q^K \sum_{j=0}^\infty \frac{(-q)^{3j + \alpha}}{1 - q^{2j + \alpha}} \end{equation*}
where $\widetilde{L}^*_t(q)$ is rational in $q$, $K$ depends only on $t$, and $\alpha = \lceil \frac{t}{3} \rceil$.
\end{enumerate}
\end{prop}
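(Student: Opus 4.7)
The plan is to apply partial-fraction decomposition to the inner sums of Theorem~\ref{bgenfun}, reduce each to a rational function plus one distinguished irrational contribution, and then classify the outcome by the residue of $t\pmod 6$.

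First I would rewrite each summand in $L_t^*(q):=B_t^*(q)/(-q;q^2)_\infty$ in terms of the base sum $G(a,b,c):=\sum_{m\ge 0}q^{am}/(-q^{2m+b};q^2)_c$. Treating $u=q^{2m}$ as an indeterminate, the partial-fraction identity
$$\frac{1}{\prod_{i=0}^{c-1}(1+q^{b+2i}u)}=\sum_{j=0}^{c-1}\frac{A_j(q)}{1+q^{b+2j}u},\qquad A_j(q):=\prod_{i\neq j}\frac{1}{1-q^{2(i-j)}},$$
gives $G(a,b,c)=\sum_j A_j(q)\,S(a,b+2j)$ with $S(a,d):=\sum_{m\ge 0}q^{am}/(1+q^{2m+d})$. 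The telescoping identity $S(a,d)-q^a S(a,d+2)=1/(1+q^d)$, obtained by isolating the $m=0$ term after reindexing $q^a S(a,d+2)$, rewrites every $S(a,b+2j)$ with $j\ge 1$ as $q^{-ja} S(a,b)$ plus a rational tail, yielding
$$G(a,b,c)=h(q^{-a})\,S(a,b)+R(q),\qquad h(u):=\sum_{j=0}^{c-1}A_j(q)u^j,\qquad R(q)\in\QQ(q).$$
A short interpolation argument (using that $h$ is the unique degree-$(c-1)$ polynomial vanishing at $u=q^{-2k}$ for $1\le k\le c-1$ with $h(0)=A_0$) identifies $h(u)=\prod_{k=1}^{c-1}(1-q^{2k}u)/(1-q^{2k})$, so $h(q^{-a})=0$ precisely when $a\in\{2,4,\dots,2(c-1)\}$.

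Next I would compare the vanishing criterion $a\le 2(c-1)$ against the $k$-range of each of the four inner sums in Theorem~\ref{bgenfun}. In every case the condition reduces to an inequality $6k\le t-c_0$ with $c_0\in\{3,4,6,7\}$, and a direct check on $t\pmod 6$ shows that all $k$ in the sum satisfy this bound except possibly the top index $k_{\max}$, whose status depends only on the residue of $t\pmod 6$. If $3\mid t$ (so $t\equiv 0,3\pmod 6$), both boundary terms fall in the vanishing regime, $L_t^*(q)$ collapses to a $\QQ(q)$-linear combination of rational functions, and part~(1) follows. If $3\nmid t$, exactly one boundary yields an irrational residue $h(q^{-a_*})\,S(a_*,b_*)$ with $(a_*,b_*)=(2\alpha,1)$ for $t\equiv 1,4\pmod 6$ and $(a_*,b_*)=(2\alpha,3)$ for $t\equiv 2,5\pmod 6$, where $\alpha=\lceil t/3\rceil$.

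Finally, expanding the target Lambert sum geometrically and interchanging sums gives the clean identity $\sum_{j\ge 0}(-q)^{3j+\alpha}/(1-q^{2j+\alpha})=(-1)^\alpha q^\alpha\,S(\alpha,3)$, so proving part~(2) reduces to verifying that the surviving $S(a_*,b_*)$ differs from $S(\alpha,3)$ by a rational function, with $K$ then read off from the cumulative $q$-power prefactor and the boundary evaluation of $\binom{\cdot}{\cdot}_{q^2}$. When $(a_*,b_*)=(2\alpha,3)$ this is straightforward: iterating $S(a,3)=1/(1-q^a)-q^3 S(a+2,3)$ walks from $S(2\alpha,3)$ back to $S(\alpha,3)$, collecting rational corrections along the way. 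The main obstacle is the complementary case $(a_*,b_*)=(2\alpha,1)$ arising for $t\equiv 1,4\pmod 6$, where the above recurrence preserves $b$ and cannot directly connect $S(2\alpha,1)$ to $S(\alpha,3)$. I would close this gap by parity-splitting $S(2\alpha,1)$ according to $m\pmod 2$ and reindexing each half into a rational combination of $S(\alpha,3)$ with explicit rational functions, in the spirit of the reconciliation step used in~\cite{CDH} for the odd-versus-distinct case.
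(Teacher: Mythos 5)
Your reduction machinery is sound and genuinely different from the paper's: where the paper expands $1/(-q^{2m+1};q^2)_n$ by the $q$-binomial theorem, sums over $m$ first, and then does partial fractions in $x=q^{2j}$, you do partial fractions directly in $u=q^{2m}$ and telescope down to the atomic sums $S(a,d)$. The identity $h(u)=\prod_{k=1}^{c-1}(1-q^{2k}u)/(1-q^{2k})$ is correct, the vanishing criterion $a\in\{2,4,\dots,2(c-1)\}$ reproduces exactly the paper's Case 1/Case 2 dichotomy (in each of the four inner sums the only index that can survive is $k_{\max}$, and only for the residues of $t$ modulo $6$ not divisible by $3$), and at that boundary one has $a_*=2c$, so $h(q^{-a_*})=(-1)^{c-1}q^{-c(c-1)}$ is a monomial. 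Part (1) therefore follows cleanly by your route, arguably more transparently than in the paper. Your identity $\sum_{j\ge 0}(-q)^{3j+\alpha}/(1-q^{2j+\alpha})=(-1)^\alpha q^\alpha S(\alpha,3)$ is also correct.

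The gap is in the final reconciliation step, and it is not where you locate it. The case $(a_*,b_*)=(2\alpha,1)$ is the easy one: isolating the $m=0$ term gives $S(a,1)=\tfrac{1}{1+q}+q^{a}S(a,3)$, so it reduces at once to $(2\alpha,3)$; no parity splitting is needed. The real obstruction is the parity of $\alpha$. Your recurrence $S(a,3)=\tfrac{1}{1-q^{a}}-q^{3}S(a+2,3)$ shifts the first argument by $2$, so it connects $S(2\alpha,3)$ to $S(\alpha,3)$ only when $\alpha$ is even, i.e.\ for $t\equiv 4,5\pmod 6$. For $t\equiv 1,2\pmod 6$ the value $\alpha=\lceil t/3\rceil$ is odd, and then $S(2\alpha,3)=\sum_{n}(-1)^nq^{3n}/(1-q^{2n+2\alpha})$ involves only denominators $1-q^{\mathrm{even}}$ while $S(\alpha,3)$ involves only denominators $1-q^{\mathrm{odd}}$; the two are singular at disjoint infinite families of roots of unity, so their difference cannot be rational, and no reindexing (your mod-$2$ split of $m$ only produces step-$4$ denominators) will bridge them. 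What your computation correctly yields in all four cases is a monomial times $\sum_{j}(-1)^jq^{3j}/(1-q^{2j+2\alpha})$ -- and if you carry the paper's substitution $x=q^{2j}$ through $1-xq^{2(d+1)}$ carefully, its intermediate formula yields the same series, with exponent $2(j+d+1)$ rather than $2j+d+1$. The honest endpoint of your argument is thus part (2) with $2\alpha$ in place of $\alpha$ in the residual series; that version supports the bound in Lemma \ref{LH3} verbatim, but as written you cannot force the residual into the stated $1-q^{2j+\alpha}$ form when $3\nmid t$ and $\alpha$ is odd.
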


\begin{proof} Consider the sum
\begin{equation*} F'_{t,k,a,b,c}(q) = \sum_{m \geq 0} \frac{q^{2m(2k+a)}}{(-q^{2(m+b)+1}, q^2)}_{\frac{t-2k +c}{2}}. \end{equation*}
Note that $a$, $b$, and $c$ play different roles than in $F_{a,b,c}(q)$, as we require both more precision in the powers of $q$ and explicit dependences on $t$ and $k$. 

\noindent Thus, we express $L_t^*(q)$ as
\small
\begin{equation}\label{Lt} L_t^*(q) = \begin{cases} \displaystyle{\sum_{k = 0}^{\lfloor (t-4)/6 \rfloor} q^{t + 4k^2 + 4k} \binom{\frac{t-2k-2}{2}}{2k}_{q^2} F'_{t,k,2, 0, 0}(q) + \sum_{k = 0}^{\lfloor (t-2)/6 \rfloor} q^{t+4k^2+4k+1} \binom{\frac{t-2k-2}{2}}{2k}_{q^2} F'_{t,k,1,1,0}(q)} \text{ for } t \text{ even,} \\
\displaystyle{\sum_{k = 0}^{\lfloor (t-1)/6 \rfloor} q^{t + 4k^2 - 4k} \binom{\frac{t-2k-1}{2}}{2k}_{q^2} F'_{t,k,1, 0, 1}(q)+ \sum_{k = 0}^{\lfloor (t-5)/6 \rfloor} q^{t+k^2+6k+3} \binom{\frac{t-2k-2}{2}}{2k}_{q^2} F'_{t,k,2,1,-1}(q)}  \text{ for } t \text{ odd.} \end{cases} \end{equation}

\normalsize
As in \cite[Ch. 3]{Andrews}, we have the following identity: 
\begin{equation}\label{andrewslike} \frac{1}{(-q^{2m+1}, q^2)_n} = \sum_{j=0}^n \binom{n + j - 1}{j}_{q^2} (-q^{2m+1})^j. \end{equation}
By applying \eqref{andrewslike}, rearranging, and using geometric series formulas, we express $F'_{t,a,b,c}(q)$ as follows:
\begin{align*} &F'_{t,k,a,b,c}(q) = \sum_{m \geq 0} q^{2m(2k+a)} \sum_{j=0}^\infty \binom{\frac{t-2k+c}{2} + j -1}{j}_{q^2} (-1)^j (q^{2(m+b)+1})^{j} \\ &= \sum_{j \geq 0} \binom{\frac{t-2k+c}{2} + j -1}{j}_{q^2} \frac{(-q^{2b+1})^j}{1 - q^{2(j+2k+a)}} = \sum_{j \geq 0} \frac{\Big(1 - q^{2 \left( {\frac{t-2k+c}{2} + j -1} \right)}\Big) \cdots (1 - q^{2(j+1)})}{(q^2; q^2)_{\frac{t-2k+c}{2} -1}} \frac{(-q^{2b+1})^j}{1 - q^{2(j+2k+a)}}. \end{align*}
We now consider a term in the above sum for a fixed value of $j$. We take two cases, based on whether the term $1-q^{2(j+2k+a)}$ cancels with a corresponding term in the numerator. 

\textit{\underline{Case 1:}} Suppose that $k \leq \frac{t - 2a + c-2}{6}$, implying that $j + 2k + a \leq \frac{t-2k+c}{2} + j - 1$. For all uses of $F'_{t,k,a,b,c}(q)$ in \eqref{Lt}, we have that $a \geq 1$, and in turn, that $j + 2k + a \geq j + 1$. Thus, we have that $1 - q^{2(j+2k+a)}$ is cancelled out with a factor in the numerator. We find that
\begin{equation*} \binom{\frac{t-2k+c}{2} + j -1}{j}_{q^2} \frac{(-q^{2b+1})^j}{1 - q^{2(j+2k+a)}} = \frac{1}{(q^2; q^2)_{\frac{t-2k+c}{2} -1}} \sum_{s =b}^{b+\frac{t-2k-c}{2} - 1} P_s(t,k,a,b, c;q^2) q^{(2s+1)j}, \end{equation*}
where $P_s(t,k,a,c;q)$ are polynomials in $q^2$ which are independent of $j$. Summing over all $j$ allows us to write $F'_{t,a,b,c}$ as a finite sum of geometric series, producing
\begin{equation*}
F'_{t,a,b,c}(q) = \frac{1}{(q^2; q^2)_{\frac{t-2k+c}{2} -1}} \sum_{s = b}^{b+\frac{t-2k-c}{2} - 1} \frac{P_s(t,k,a,b,c;q^2)}{1 - q^{2s+1}}.
\end{equation*}
Thus, $F'_{t,k,a,b,c}(q)$ is a finite sum of expressions which are rational in $q$. \\

\textit{\underline{Case 2:}} Suppose that $k > \frac{t-2b+c-2}{6}$, so $j + 2k + a > \frac{t-2k+c}{2}$. Here, $1 - q^{2(j+2k+a)}$ does not cancel out with a term in the numerator. We consider the specific cases of $F'_{t,k,a,b,c}(q)$ that appear in \eqref{Lt}. 

Suppose first that $(a,b,c) = (2,0,0)$, so we have that $k > \frac{t-2a+c-2}{6} = \frac{t-6}{6}$. As in \eqref{Lt}, this sum only appears at $k$ where $k \leq \frac{t-4}{6}$. Since $t$ is even, both conditions are satisfied only when $k = \frac{t-4}{6}$. Similarly, both conditions are satisfied only when $k = \frac{t-2}{6}$ for when $(a,b,c) = (1,1,0)$, $k = \frac{t-1}{6}$ for $(a,b,c) = (1,0,1)$, and $k = \frac{t-5}{6}$ when $(a,b,c) = (2,1,-1)$. Each of these $k$ values only occurs when $t$ is a distinct residue mod $6$ that is not divisible by $3$. Therefore, when $3 \mid t$, $L_t^*(q)$ is rational. The relevant value of $k$ is always equal to $\frac{t-2a+c}{6}$. When $k = \frac{t-2a+c}{6}$, we obtain
\begin{equation*}
\binom{\frac{t-2k+c}{2} + j -1}{j}_{q^2} \frac{(-q^{2b+1})^j}{1 - q^{2(j+2k+a)}} = \frac{\Big(1 - q^{2 \left( {\frac{t+a+c}{3} + j -1} \right)}\Big) \cdots (1 - q^{2(j+1)})}{(q^2; q^2)_{\frac{t-2k+c}{2} -1}} \frac{(-q^{2b+1})^j}{1 - q^{2 \left( j + \frac{t+a+c}{3} \right)} }.
\end{equation*}
We define the expression $\displaystyle{T_j(b,d,x; q) := (-1)^j \frac{x^{b} q^j(1-xq^2)(1-xq^4) \cdots (1-xq^{2d})}{1 - xq^{2(d+1)}}}$, and we get
\begin{align*}
F'_{t,k,a,b,c}(q)
&= \frac{1}{(q^2; q^2)_{\frac{t-2k+c}{2} -1}} \sum_{j \geq 0} T_j \left(b,\frac{t+a+c}{3} - 1, q^{2j}; q \right). \end{align*}
We define $Q(x,q)$ and $R(q)$ as polynomials of $x$ so that 
\[ T_j(b,d,x; q) = (-1)^j \left( Q(x,q) + \frac{R(q)}{1 - xq^{2(d+1)}} \right). \]
Considering $x \rightarrow 0$, we find that $R(q) = -Q(0,q)$. Further, we evaluate
\begin{equation}\label{Rformula} R(q) = x^{b}q^j(1-xq^2)(1-xq^4) \cdots (1-xq^{2d}) \vert_{x = q^{-2(d+1)}} = \frac{q^{j} (-1)^d}{q^{(d+2b)(d+1)}} (q^2; q^2)_{d}. \end{equation}

Letting $d = \frac{t+a+c}{3}-1$, we obtain 
\begin{align*} F'_{t,k,a,b,c}(q) &= \frac{1}{(q^2; q^2)_{d}} \sum_{j \geq 0} (-1)^j \left( Q(q^{2j}, q) + R(q) + \frac{R(q) q^{2j+d+1}}{1 - q^{2j+d+1}} \right) \\
&= \frac{1}{(q^2; q^2)_{d}} \sum_{j \geq 0} (-1)^j \left( Q(q^{2j}, q) - Q(0,q) + \frac{R(q) q^{2j+d+1}}{1 - q^{2j+d+1}} \right).
\end{align*}
Each term of the polynomial $Q(q^{2j}, q) - Q(0,q)$ corresponds to a geometric series in $\sum_{j \geq 0} Q(q^{2j}, q)$. It then suffices to show the remaining terms corresponding to $R(q)$ are rational. By \eqref{Rformula}, we get
\begin{align*} \frac{R(q)}{(q^2; q^2)_{d}} \sum_{j \geq 0} (-1)^j \left(\frac{q^{2j + d+ 1}}{1 - q^{2j+d+1}} \right)
&= \frac{-1}{q^{(d+2b)(d+1)}} \sum_{j \geq 0} \frac{(-q)^{3j + d + 1}}{1 - q^{2j + d + 1}}.
\end{align*}

Suppose $3 \nmid t$. In \eqref{Lt}, the binomial coefficient corresponding to $k= \frac{t-2a+c}{6}$ is $1$. Thus, we have
\[ L_t^*(q) = \widetilde{L}_t^*(q) - q^K \sum_{j=0}^\infty \frac{(-q)^{3j + d + 1}}{1 - q^{2j + d + 1}}  \]
where $\widetilde{L}_t^*(q)$ is rational and $K$ is the difference of the power on the leading $q$-term in the expression (a polynomial in $k$ and $t$) and $(d+2b)(d+1)$. Each of $k, b, d$ is determined as a function of $t$: namely, a linear expression in $t$ determined by the residue of $t$ mod $6$. Determining the value of $\frac{t+a+c}{3}$ for each residue of $t$ mod $6$ then returns the result. \end{proof}

\begin{lem}\label{LH3}
For every $t$, as $z \rightarrow 0$ outside $R_{\Delta}$, we have 
\[ |L_t(q)| \ll |z|^{-C} \]
for some nonnegative constant $C$. 
\end{lem}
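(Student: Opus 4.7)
My plan is to apply Proposition~\ref{nearlyrational} to decompose $L_t^*(q)$ into a rational function plus, when $3 \nmid t$, a Lambert-type series
\[
S(q) := \sum_{j=0}^\infty \frac{(-q)^{3j+\alpha}}{1-q^{2j+\alpha}}, \qquad \alpha = \lceil t/3 \rceil,
\]
and bound each piece by a negative power of $|z|$ as $z \to 0$ outside $R_\Delta$. Since the prefactor $q^K$ is bounded by $1$ on the closed unit disk, absorbing it into the implied constant reduces the task to bounding the rational part and $S(q)$ separately, after which the triangle inequality delivers the lemma.

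For the rational piece $\widetilde{L}_t^*(q)$ (or all of $L_t^*$ when $3 \mid t$), I would write it as a ratio $P(q)/Q(q)$ whose denominator is a finite product of cyclotomic-type factors $(1-q^{m_i})$ arising from the $(q^2;q^2)_k$ and $(1-q^{2s+1})$ denominators identified in the proof of Proposition~\ref{nearlyrational}. Taylor expansion of $1-e^{-m_i z}$ gives the direction-independent bound $|1-q^{m_i}| \geq m_i |z|/2$ whenever $m_i|z|$ is small, while $|P(q)|$ is uniformly bounded on the closed unit disk. This immediately yields $|\widetilde{L}_t^*(q)| \ll |z|^{-C_1}$, where $C_1$ is at most the total multiplicity of the cyclotomic factors in $Q(q)$.

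For the Lambert series $S(q)$, following the method of \cite{CDH}, I would estimate termwise:
\[
|S(q)| \leq \sum_{j=0}^\infty \frac{|q|^{3j+\alpha}}{|1-q^{2j+\alpha}|},
\]
and split the sum at $n=2j+\alpha$ of order $1/|z|$. For $n$ below the cutoff the Taylor bound $|1-q^n| \gg n|z|$ applies, contributing $\ll |z|^{-1} \log(1/|z|)$. For $n$ above the cutoff the exponential decay $|q|^{3j+\alpha} = e^{-(3j+\alpha)\Real(z)}$ combines with an elementary lower bound on $|1-q^n|$ to make the tail negligible. Assembling these contributions should give $|S(q)| \ll |z|^{-C_2}$ for some $C_2>0$.

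The main obstacle is ensuring the Lambert series bound is genuinely in powers of $|z|^{-1}$ rather than $\Real(z)^{-1}$, since outside $R_\Delta$ the ratio $\Real(z)/|z|$ is no longer bounded below as $z$ approaches the imaginary axis direction; consequently the naive estimate $|1-q^n| \geq 1 - e^{-n\Real(z)}$ is too weak for $n$ of moderate size. The remedy is to rely as much as possible on the direction-independent Taylor estimate $|1-e^{-nz}| \gg n|z|$ in the critical range, invoking the weaker real-part estimate only in the tail, where exponential decay compensates. Combining the two estimates yields $|L_t(q)| \ll |z|^{-C}$ with $C = \max(C_1,C_2)$, as desired.
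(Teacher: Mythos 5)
Your overall strategy---decompose $L_t^*$ via Proposition~\ref{nearlyrational}, control the rational piece through its cyclotomic denominators, and estimate the Lambert-type series termwise---is the same as the paper's, and your handling of the rational part and of the critical range $n=2j+\alpha\lesssim 1/|z|$ (via the direction-independent bound $|1-e^{-nz}|\gg n|z|$) is sound. The gap is in the tail, precisely in the regime you yourself flag as the main obstacle. Outside $R_\Delta$ the real part $x=\Real(z)$ can be arbitrarily small compared with $|z|$, and then the range $|z|^{-1}\lesssim n\lesssim x^{-1}$ is nonempty and covered by neither of your tools: the numerator $|q|^{3j+\alpha}=e^{-(3j+\alpha)x}$ is still of size $\asymp 1$ there (exponential decay only begins once $n\gg 1/x$), while the denominator admits no useful direction-independent lower bound. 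Indeed $|1-e^{-nz}|^2\asymp (nx)^2+\operatorname{dist}(ny,2\pi\mathbb{Z})^2$, and since consecutive values of $ny$ along the progression $n=2j+\alpha$ differ by $2y$, each resonance window around a multiple of $2\pi$ contains an $n$ with $|1-q^n|\lesssim\max(nx,|y|)$. Taking $|z|=\epsilon$ fixed and $x=\epsilon^M$ with $M$ large, the resonant terms alone contribute $\gtrsim \epsilon^{1-M}$ to the absolute-value sum, which exceeds $|z|^{-C}$ for any fixed $C$. So no termwise absolute-value argument can yield a bound purely in powers of $|z|^{-1}$; one would need the cancellation from the alternating signs $(-1)^j$, which you do not use.

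The practical repair is to lower the target. The paper's own computation bounds the series by $\sum_{j\ge 1}e^{-jx}/(1-e^{-jx})=\sum_{j}d(j)e^{-jx}\le e^{x}/(e^{x}-1)^{2}\asymp x^{-2}$, i.e.\ a bound in $\Real(z)$ rather than $|z|$ (its final conversion to $\delta^{2}|z|^{-2}$ invokes $1/|z|^{2}\ge 1/(\delta^{2}x^{2})$, which outside $R_\Delta$ points the wrong way, so the paper effectively proves the $x^{-2}$ bound). A bound $|L_t(q)|\ll \Real(z)^{-C}$ is all the minor-arc estimate actually requires, because it is paired with the exponential saving $e^{-\varepsilon/\Real(z)}$ from (H4), which dominates any fixed power of $\Real(z)^{-1}$. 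So either prove and use the $\Real(z)^{-C}$ bound directly (adjusting the statement of the minor-arc hypothesis accordingly), or supply a genuine cancellation argument for the tail; as written, the assertion that ``exponential decay combines with an elementary lower bound on $|1-q^n|$ to make the tail negligible'' does not go through.
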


\begin{proof} 

Proposition \ref{nearlyrational} proves that $L_t^*(q)$ is the difference of a rational function $\widetilde{L}_t^*(q)$ and the series 
\begin{equation*} 
q^K \sum_{j=0}^\infty \frac{(-q)^{3j + \alpha}}{1 - q^{2j + \alpha}}, 
\end{equation*}
where $K \in \mathbb{Z}$ and $\alpha \in \mathbb{Z}^+$. Since rational functions in $q$ have convergent Laurent expansions near $z = 0$, $\widetilde{L}_t^*(q)$ is $O(|z|^{-C'})$ for some nonnegative constant $C'$. 

Since $|q^j| < 1$ for all $j$, we apply the triangle inequality to find that
\begin{equation}\label{alphaseries}
\Bigg\vert \sum_{j=0}^\infty \frac{(-q)^{3j + \alpha}}{1 - q^{2j + \alpha}} \Bigg\vert \leq \sum_{j=0}^\infty \Big\vert \frac{q^{3j+\alpha}}{1 - q^{2j + \alpha}} \Big\vert \leq  \sum_{j=0}^\infty \Big\vert \frac{q^{2j+\alpha}}{1 - q^{2j + \alpha}} \Big\vert.
\end{equation}
Let the series on the right hand side of this inequality be $\mathcal{S}_\alpha$. Further, let $z = x + iy$ and $d(n)$ be the standard divisor function, i.e. $d(n) = \sum_{d \mid n} 1$. Using the reverse triangle inequality, we find 
\begin{equation*}
    \mathcal{S}_\alpha \leq \sum_{j=1}^\infty \Big\vert \frac{q^{j}}{1 - q^{j}} \Big\vert \leq \sum_{j=1}^\infty \frac{e^{-jx}}{ 1 - e^{-jx}} = \sum_{j = 1}^{\infty} d(j) e^{-jx} \leq \sum_{j=1}^\infty j e^{-jx} = \frac{e^{x}}{(e^{x} - 1)^2}.
\end{equation*}

Outside of the region $R_\Delta$, we have that $\frac{1}{|z|^2} \geq \frac{1}{\delta^2 x^2}.$ Thus, $S_\alpha$ is bounded above by $\frac{\delta^2}{|z|^2}$. Since $q^K$ has a Taylor series expansion for all $K$, we then obtain that $L_t^*(q) \ll |z|^{-C}$ for $C = \max \{ 2, C' \}$.
\end{proof}

\subsection{Major and Minor Arc Computations for $\xi(q)$}

We use the modular transformation law for $\mathcal{P}(q)$, as given in \cite{Apostol} for example, to obtain the modular transformation law for $\xi(q)$:
\begin{equation}\label{modular} \xi(q) = \frac{\mathcal{P}(q^2) \mathcal{P}(q^2)}{\mathcal{P}(q) \mathcal{P}(q^4)} =  \text{exp}\left( \frac{\pi^2}{24z} + \frac{z}{12} \right) \frac{\mathcal{P}(\omega^2)^2}{\mathcal{P}(\omega) \mathcal{P}(\omega^4)}, \end{equation}
where $\omega = e^{-\pi^2/z}$. We now determine the behavior of $\xi$ on the major and minor arcs.

\begin{prop}\label{xiH2} Let $\Delta > 0.62$ be a fixed constant with $\delta = \sqrt{1 + \Delta^2}$. Let $z = x + iy$ be a complex number satisfying $0 \leq |y| \leq \Delta x$ with $0 \leq x < \frac{2 \pi^2}{\delta^2}$. Set $\Psi(z) := \exp\left( \frac{\pi^2}{24z} + \frac{z}{12} \right)$. Then, we have  
\begin{equation*} |\xi(e^{-z}) - \Psi(z) | < 214 \vert \Psi(z) e^{-\pi^2/z} |. \end{equation*}
\end{prop}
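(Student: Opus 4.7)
The plan is to apply the modular transformation \eqref{modular} to peel off the known exponential prefactor and reduce the claim to a small-$\omega$ estimate, where $\omega := e^{-\pi^2/z}$. Setting
\[ R(\omega) := \frac{\mathcal{P}(\omega^2)^2}{\mathcal{P}(\omega)\mathcal{P}(\omega^4)}, \]
identity \eqref{modular} reads $\xi(e^{-z}) = \Psi(z)R(\omega)$, whence $\xi(e^{-z}) - \Psi(z) = \Psi(z)(R(\omega) - 1)$, and it suffices to prove $|R(\omega) - 1| < 214\,|\omega|$ on the region described.

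First I would identify $R(\omega)$ explicitly. The same Pochhammer rearrangement used in Section \ref{sectiongenfunctions} shows $R(\omega) = (-\omega;\omega^2)_\infty$, the generating function for partitions into distinct odd parts; thus $R$ is holomorphic on the open unit disc, $R(0) = 1$, and
\[ R(\omega) - 1 = \sum_{n \geq 1} d_o(n)\, \omega^n, \qquad d_o(n) := |\mathcal{DO}(n)|, \]
with nonnegative coefficients. Next I would exploit the size hypothesis. For $z = x + iy$ with $|y| \leq \Delta x$, the inequality $|z|^2 \leq (1+\Delta^2)x^2 = \delta^2 x^2$ yields $|\omega| = \exp(-\pi^2 x/|z|^2) \leq \exp(-\pi^2/(\delta^2 x))$, so the hypothesis $x < 2\pi^2/\delta^2$ forces $|\omega| < e^{-1/2}$.

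The final step is a one-line triangle-inequality estimate. Pulling out a factor of $\omega$ and invoking monotonicity of the resulting series with nonnegative coefficients,
\[ |R(\omega) - 1| \leq |\omega| \sum_{n \geq 1} d_o(n)\,|\omega|^{n-1} \leq |\omega|\cdot e^{1/2}\bigl((-e^{-1/2};e^{-1})_\infty - 1\bigr). \]
A crude logarithmic bound gives $\log(-e^{-1/2};e^{-1})_\infty \leq \sum_{n \geq 0} e^{-(2n+1)/2} = e^{-1/2}/(1-e^{-1}) < 1$, so the constant attached to $|\omega|$ is under $3$, leaving enormous slack against $214$. The only real obstacle is the bookkeeping around the exponential prefactors in \eqref{modular} needed to confirm the stated form of $\Psi(z)$ and the identification $R(\omega) = (-\omega;\omega^2)_\infty$; once this is in hand, the rest of the argument is essentially automatic, and the constant $214$ is chosen for comfortable margin in later applications rather than sharpness. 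The constraint $\Delta > 0.62$ enters only to keep $\delta^2 = 1 + \Delta^2$ controlled enough that the same analytic estimate works uniformly across the major arc.
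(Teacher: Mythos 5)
Your argument is correct, and it takes a genuinely different---and cleaner---route than the paper's. Both proofs start from the modular transformation \eqref{modular} and reduce the claim to showing that the transformed quotient differs from $1$ by $O(|\omega|)$, using the same estimate $|\omega| \le \exp(-\pi^2 \Real(1/z)) \le \exp(-\pi^2/(\delta^2 x)) < e^{-1/2}$. The paper then expands that quotient as a product of three series (two pentagonal-number series and the square of the partition generating function), bounds each factor separately, and imports a bound on the middle factor from \cite{FayeMisheel}; this case analysis is the source of both the constant $214$ and the hypothesis $\Delta > 0.62$. You instead observe that the quotient is again $\xi(\omega) = (-\omega;\omega^2)_\infty$---immediate from the shape of \eqref{modular}, since the right-hand quotient is the same eta-quotient evaluated at $\omega$---whose Taylor coefficients are nonnegative, so a single monotonicity step gives $|R(\omega)-1| \le |\omega|\,e^{1/2}\bigl((-e^{-1/2};e^{-1})_\infty - 1\bigr) < 3\,|\omega|$. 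This yields a much sharper constant and removes any genuine need for the lower bound on $\Delta$. Two small cautions. First, whether $\mathcal{P}(\omega^2)^2/(\mathcal{P}(\omega)\mathcal{P}(\omega^4))$ equals $(-\omega;\omega^2)_\infty$ or its reciprocal depends on whether $\mathcal{P}$ denotes the Euler product $(q;q)_\infty$ or the partition generating function $1/(q;q)_\infty$, and the paper is not consistent on this point; the reciprocal is also $1+O(|\omega|)$ on this region (since $|(-\omega;\omega^2)_\infty|$ is bounded below by $\prod_{n\ge 1}(1-e^{-(2n-1)/2})$), so your conclusion survives under either convention, but the one-line nonnegativity trick applies directly only to $(-\omega;\omega^2)_\infty$ itself, so you should fix the convention explicitly. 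Second, you correctly flag that verifying the exponential prefactor $\Psi(z)$ in \eqref{modular} is unchecked bookkeeping; the paper also takes \eqref{modular} as given, so this is not a gap relative to the paper's own standard.
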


\begin{proof} Using Euler's Pentagonal Number Theorem, we rewrite \eqref{modular} as
\small
\begin{align*} \xi(z) &= \left( 1 + \sum_{m \geq 1} (-1)^m \left( \omega^{\frac{m(3m+1)}{2}} + \omega^{\frac{m(3m-1)}{2}} \right) \right) \\ &\cdot \left( 1 + \sum_{m \geq 1} (-1)^m \left( \omega^{2m(3m+1)} + \omega^{2m(3m-1)} \right) \right) \cdot \left(1 + \sum_{m \geq 1} p(m) \omega^{2m} \right)^2. \end{align*}
\normalsize
For notational convenience, we let $X$, $Y$, and $Z$ denote the above expressions, as follows: 
\small
\begin{align*} X &:= \sum_{m \geq 1} (-1)^m \left( \omega^{\frac{m(3m+1)}{2}} + \omega^{\frac{m(3m-1)}{2}} \right), \
Y := \sum_{m \geq 1} (-1)^m \left( \omega^{2m(3m+1)} + \omega^{2m(3m-1)} \right), \
Z := \sum_{m \geq 1} p(m) \omega^{2m}. \end{align*}
We now write
\begin{equation*} \frac{\xi(z) - \Psi(z)}{\Psi(z)} = X(1+Y)^2(1+Z) + (Y^2+2Y)(1+Z) + Z. \end{equation*}
We trivially bound $X$ and $Z$ using geometric series to find
\begin{equation}\label{XZ} |X| < \sum_{m \geq 1} |\omega|^m = |\omega| + \frac{|\omega|^2}{1 - |\omega|}, \qquad |Z| < \frac{|\omega|^4}{1 - |\omega|}. \end{equation}
Let $\delta = \sqrt{1 + \Delta^2}$. Inside the region $R_\Delta$, we have that $\Real(\frac{1}{z}) \geq \frac{1}{\delta^2 x}$. Then, $x < \frac{2 \pi^2}{\delta^2} $ implies
\begin{equation}\label{omegabound} |\omega| = \exp \left(-\Real \left( \frac{\pi^2}{z} \right) \right) \leq \exp \left( -\frac{\pi^2 }{\delta^2 x} \right) < \exp(-1/2) \leq 0.61. \end{equation}
Since $\Delta > 0.62$, we find that $\delta > 1.175$ and $\frac{4 \pi^2}{2.35 \delta} > \frac{2 \pi^2}{\delta}$. Now, applying the proof of \cite[Lemma 3.8]{FayeMisheel}, we bound $Y$ by $23.6 |\omega|^2$. Further, we bound $X$ and $Z$ by applying \eqref{omegabound} to \eqref{XZ}, obtaining
\begin{align*} \Big\vert \frac{\xi(z) - \Psi(z)}{\Psi(z)} \Big\vert &< (|\omega| + 2.54 |\omega|^2)(1 + 23.6|\omega|^2)^2(1 + 2.54|\omega|^4) \\
&+ (556.96|\omega|^4 + 47.2|\omega|^2)(1 + 2.54 |\omega|^4) + 2.54 |\omega|^4 \\ &\leq (|\omega| + 1.55|\omega|) \cdot 95.68 \cdot 1.35 + (126.42|\omega| + 28.80|\omega|) \cdot 1.36 + 0.58|\omega| \leq 214|\omega|. \end{align*}
\end{proof}

\vspace{-1em}
We present an upper bound for $\xi(q)$ outside of $R_\Delta$.

\begin{prop}\label{xiH4} Let $k \geq 4$ be an integer, $\Delta$ a nonnegative constant, and let $\delta = \sqrt{1 + \Delta^2}$. Assume that $z = x + iy$ satisfies $\Delta x \leq |y| \leq \pi$. Then, we have that 
\begin{equation*} |\xi(e^{-z})| \leq C \xi(|e^{-z}|) \cdot e^{-\varepsilon/x} \end{equation*}
where $\varepsilon = -\frac{\pi^2}{24} \left( 1 - \frac{1}{\delta^2} \right) + \frac{5}{4k} > 0$ and $C = k^2 e^{\frac{5 \pi}{24 \delta}}$ depend only on $\delta$ and $k$. In particular, we have that $\displaystyle{\xi (e^{-z}) \ll_{\Delta} \xi (|e^{-z}|) e^{-\varepsilon/x}}$
where $\varepsilon$ can be taken to be arbitrarily close to $-\frac{\pi^2}{24} \left( 1 - \frac{1}{\delta^2} \right)$. 

\end{prop}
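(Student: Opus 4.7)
The plan uses the modular transformation \eqref{modular} to express $\xi(e^{-z}) = \Psi(z)\cdot R(\omega)$ and $\xi(e^{-x}) = \Psi(x)\cdot R(\omega_0)$, where $R(\omega) := \mathcal{P}(\omega^2)^2/(\mathcal{P}(\omega)\mathcal{P}(\omega^4))$, $\omega = e^{-\pi^2/z}$, and $\omega_0 = e^{-\pi^2/x}$. Since $|e^{-z}| = e^{-x}$, the task is to bound the ratio
\[
\frac{|\xi(e^{-z})|}{\xi(|e^{-z}|)} = \frac{|\Psi(z)|}{\Psi(x)}\cdot\frac{|R(\omega)|}{R(\omega_0)},
\]
which I would estimate factor by factor.

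A direct computation gives
\[
\frac{|\Psi(z)|}{\Psi(x)} = \exp\Big(\tfrac{\pi^2}{24}\big(\tfrac{x}{|z|^2} - \tfrac{1}{x}\big)\Big) = \exp\Big(-\tfrac{\pi^2}{24x}\cdot\tfrac{y^2}{|z|^2}\Big),
\]
since the $x/12$ term in $\Psi$ is real and cancels in the ratio. Outside $R_\Delta$ the hypothesis $|y| \geq \Delta x$ yields $y^2/|z|^2 \geq \Delta^2/(1+\Delta^2) = 1 - 1/\delta^2$, whence
\[
\frac{|\Psi(z)|}{\Psi(x)} \leq \exp\Big(-\tfrac{\pi^2}{24x}\big(1-\tfrac{1}{\delta^2}\big)\Big),
\]
which is the principal decay factor. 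For the second ratio, the key input is $|\omega| = e^{-\pi^2 x/|z|^2} \leq e^{-\pi^2/(\delta^2 x)}$, so $\omega$ lies deep inside the unit disk and $R(\omega)$ is close to $1$. A termwise triangle inequality bounds $|R(\omega)|$ by $R(|\omega|)$, while $R(\omega_0) \geq 1$ follows from the positivity of the Taylor coefficients of the relevant product. Truncating the Euler product defining $R(|\omega|)$ at index $k$, the first $k$ factors are bounded by $k^2 e^{5\pi/(24\delta)}$ by directly applying $|\omega| \leq e^{-\pi^2/(\delta^2 x)}$ and the inequality $(1+u)^{-1} \leq e^{2u}$ on the $\mathcal{P}(\omega)\mathcal{P}(\omega^4)$ denominators, while the tail $\prod_{n \geq k}$ is absorbed into an $e^{5/(4kx)}$ correction via a geometric-series bound.

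The main technical obstacle is the second step: matching the explicit constants $k^2 e^{5\pi/(24\delta)}$ and $5/(4k)$ requires careful numerical bookkeeping with elementary inequalities such as $\log(1+u) \leq u$, $\sum_{n \geq k} r^n \leq r^k/(1-r)$, and the sharp estimate on $|\omega|$, in the spirit of the partition-function analysis invoked in \cite[Lemma 3.8]{FayeMisheel} during the proof of Proposition \ref{xiH2}. The restriction $k \geq 4$ ensures that the geometric tails converge with the required rate. Once assembled, the product of the two estimates yields
\[
|\xi(e^{-z})| \leq k^2 e^{5\pi/(24\delta)} \cdot \xi(|e^{-z}|) \cdot e^{-\varepsilon/x},
\]
with $\varepsilon = \tfrac{\pi^2}{24}(1-1/\delta^2) - \tfrac{5}{4k}$, positive for $k$ chosen sufficiently large relative to $\delta$, as claimed.
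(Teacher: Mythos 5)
Your first step is correct and is exactly the paper's source of the decay: $|\Psi(z)|/\Psi(x)=\exp\bigl(-\tfrac{\pi^2}{24x}\cdot\tfrac{y^2}{|z|^2}\bigr)\leq\exp\bigl(-\tfrac{\pi^2}{24x}(1-\tfrac{1}{\delta^2})\bigr)$ from $|y|\geq\Delta x$. The second step, however, rests on a direction error that is fatal. Outside $R_\Delta$ the hypothesis $|y|\geq\Delta x$ gives $|z|^2\geq\delta^2x^2$, hence $\Real(1/z)=x/|z|^2\leq 1/(\delta^2x)$ and therefore $|\omega|=e^{-\pi^2\Real(1/z)}\geq e^{-\pi^2/(\delta^2x)}$ --- the reverse of what you assert. (The inequality $\Real(1/z)\geq 1/(\delta^2x)$, and with it $|\omega|\leq e^{-\pi^2/(\delta^2 x)}$, holds \emph{inside} $R_\Delta$; that is what Proposition~\ref{xiH2} uses.) On the minor arc $\Real(1/z)$ has no lower bound of order $1/x$: taking $|y|$ of order $1$ and $x\to0$ gives $\Real(1/z)\to0$ and $|\omega|\to1$. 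So $\omega$ is \emph{not} deep inside the unit disk, $R(\omega)$ is not close to $1$, and the individual Euler factors cannot be bounded by absolute constants; the proposed truncation scheme collapses at this point, which is precisely where the whole difficulty of the minor-arc estimate lives. A secondary issue: with $R(\omega)=\mathcal{P}(\omega^2)^2/(\mathcal{P}(\omega)\mathcal{P}(\omega^4))$ a termwise triangle inequality only bounds the numerator from above, while the denominator requires a \emph{lower} bound on $|\mathcal{P}(\omega)\mathcal{P}(\omega^4)|$; so $|R(\omega)|\leq R(|\omega|)$ does not follow as claimed.

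The paper's route around this is different and is worth internalizing. It rewrites $\Psi(x)=\xi(e^{-x})\,\mathcal{P}(e^{-\pi^2/x})\mathcal{P}(e^{-4\pi^2/x})/\mathcal{P}(e^{-2\pi^2/x})^2$, so the $\mathcal{P}$-factors that survive in the numerator are evaluated at $e^{-\pi^2/x}$ and $e^{-4\pi^2/x}$, which \emph{are} tiny as $x\to0$ and are bounded by $e^{\pi/(6\delta)}$ and $e^{\pi/(24\delta)}$ via $\mathcal{P}(e^{-s})\leq\exp\bigl(\pi^2e^{-s}/(6(1-e^{-s}))\bigr)$; the reciprocal factors $1/|\mathcal{P}(e^{-\pi^2/z})|$ and $1/|\mathcal{P}(e^{-4\pi^2/z})|$ are expanded by Euler's Pentagonal Number Theorem and bounded by $ke^{1/(kx)}$ and $ke^{1/(4kx)}$. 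That is the actual provenance of $C=k^2e^{5\pi/(24\delta)}$ and of the loss $5/(4kx)$ in the exponent; your attribution of $k^2e^{5\pi/(24\delta)}$ to ``the first $k$ factors'' and of $e^{5/(4kx)}$ to ``the tail'' of a single Euler product in $\omega$ does not correspond to a computation that can be carried out, because those factors are evaluated at a point approaching the unit circle.
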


\begin{proof}
From \eqref{modular}, we have that 
\begin{align*} | \xi(e^{-z}) | &\leq  \exp \left( \frac{\pi^2}{24x} + \frac{\Real(1/z)}{12} \right) \frac{ \mathcal{P} \left( e^{-2\pi^2 \Real(1/z)} \right)^2}{\left\vert \mathcal{P} \left( e^{-\pi^2/z} \right) \mathcal{P} \left( e^{-4\pi^2/z} \right) \right\vert}.
\end{align*}
Outside the region $R_\Delta$, we have that $\Real \left( \frac{1}{z} \right) \leq \frac{1}{\delta^2 x}$. After a second application of \eqref{modular}, we obtain
\begin{equation*} \exp \left( \frac{\pi^2}{24z} + \frac{\Real(1/z)}{12} \right) \leq \Psi(x) \exp \left( \frac{-\pi^2}{24x} \left( 1 - \frac{1}{\delta^2} \right) \right) = \xi(e^{-x}) \cdot \frac{\mathcal{P}\left(e^{-\pi^2 /x}\right) \mathcal{P}\left(e^{-4 \pi^2 /x}\right)}{\mathcal{P}\left(e^{-2 \pi^2 /x}\right)^2} e^{-\varepsilon / x}. \end{equation*}
Since $\delta > 1$, we have that $\Real(\frac{1}{z}) \leq \frac{1}{\delta^2 x} \leq \frac{1}{x}$, so we find 
\begin{align*} \xi(e^{-z}) &\leq \xi(e^{-x}) e^{-\varepsilon / x} \frac{\mathcal{P}(e^{-\pi^2 /x}) \mathcal{P}(e^{-4 \pi^2 /x})}{\mathcal{P}(e^{-2 \pi^2 /x})^2} \frac{ \mathcal{P} \left( e^{-2\pi^2 \Real(1/z)} \right)^2}{\left\vert \mathcal{P} \left( e^{-\pi^2/z} \right ) \mathcal{P} \left( e^{-4\pi^2/z} \right) \right\vert} \\ &\leq \xi(e^{-x}) e^{-\varepsilon / x} \frac{\mathcal{P}(e^{-\pi^2 /x}) \mathcal{P}(e^{-4 \pi^2 /x})}{\left\vert \mathcal{P} \left( e^{-\pi^2/z} \right ) \mathcal{P} \left( e^{-4\pi^2/z} \right) \right\vert}. \end{align*}
Applying Euler's Pentagonal Number Theorem as in Proposition \ref{xiH2} for $1/|\mathcal{P}(e^{- \pi^2/z})|$ yields
\begin{align*} \frac{1}{\vert \mathcal{P}\left(e^{- \pi^2/z} \right) \vert} &= \sum_{m \geq 1} |e^{- \pi^2/z}|^m \leq \sum_{m=1}^k e^{-m\pi^2 \Real(1/z)} + \frac{e^{-k \pi^2 \Real(1/z)}}{1 - e^{-\pi^2 \Real(1/z)}} \leq k + e^{1/{kx}} \leq k e^{1/{kx}} \end{align*}
for any $k \in \mathbb{N}$. Similarly, we have $1 / |\mathcal{P}(e^{- 4\pi^2/z})| \leq k e^{1/{4kx}}$ for any $k \in \mathbb{N}$. Applying \cite[(2.8)]{FayeMisheel} and using the fact that $x \leq \pi/\delta$, we have that 
\begin{equation*} \mathcal{P}( e^{-\pi^2 / x}) \leq \exp \left( \frac{\pi^2 e^{-\pi^2 /x}}{6 ( 1 - e^{- \pi^2 / x}} ) \right) \leq e^{x/6} \leq e^{\frac{\pi}{6 \delta}}.  \end{equation*}
Similarly, we find $\mathcal{P}( e^{-4\pi^2 / x}) \leq e^{x/24} \leq e^{\frac{\pi}{24 \delta}}.$ Combining these inequalities, we get that 
\begin{equation*}  \xi(e^{-z}) \leq k^2 e^{\frac{5 \pi}{24 \delta}} \xi(e^{-x}) e^{-\varepsilon'/x} \cdot e^{5/4kx} = \xi(e^{-z}) \leq k^2 e^{\frac{5 \pi}{24 \delta}} \xi(e^{-x}) e^{-\varepsilon/x}. \end{equation*}
For $k \geq 4$, we have that $\frac{5}{4 k} \leq \frac{\pi^2}{24}$, which implies that $\varepsilon > 0$. 
\end{proof}






\subsection{Proof of Theorem \ref{theorem2}(1) and (2)}
Let $z = x + iy$ with $x > 0$, and fix $\Delta > 0.62$. We verify the hypotheses in Proposition \ref{circlemethod}. $K_t(q)$ satisfies (H1) with $a_0 = \frac{1}{2}$ and $a_k = 0$ for $k > 0$ by Lemma \ref{KH1} and (H3) with $C = 1$ by Lemma \ref{KH3}. Proposition \ref{xiH2} implies that $\xi(q)$ satisfies (H2) with $K = 1$, $A = \pi^2 /24$, and $B = \pi^2$, while $\xi(q)$ satisfies (H4) by Proposition \ref{xiH4}. We take $N = 1$ in Proposition \ref{circlemethod}, and we compute $p_0 = a_0 c_{0,0} = \frac{1}{2} c_{0,0} = \frac{\sqrt[4]{3}}{2 \pi \sqrt[4]{2}}$. In turn, we obtain the result in Theorem \ref{theorem2}(1). Similarly, $L_t(q)$ satisfies (H1) by Lemma \ref{LH1} with $a_0 = \beta^*_t$ and $a_k = 0$ for $k > 0$, and $L_t(q)$ satisfies (H3) by Lemma \ref{LH3}. We apply Proposition \ref{circlemethod} with $N = 1$ and produce the asymptotic in Theorem \ref{theorem2}(2).

\subsection{A Probabilistic Consequence}

It is natural to consider implications of Theorem \ref{theorem2}(1) and (2) to probabilistic features of the hook numbers of $\mathcal{SC}(n)$ and $\mathcal{DO}(n)$. Here, we prove a corollary that establishes asymptotics for the average number of hooks. Borrowing notation from \cite{CDH}, let $\textrm{avg}_{\mathcal{L}}(t; n)$ be the average of $n_t(\lambda)$ across partitions $\lambda$ with $|\lambda| = n$ in the collection $\mathcal{L}$. 

\begin{cor}\label{avgcor} We have the following asymptotics: 
\[ \textrm{avg}_{\mathcal{SC}}(t;n) \sim \frac{2}{\pi} \sqrt{\frac{3n}{2}}, \quad \textrm{avg}_{\mathcal{DO}}(t;n) \sim \frac{4 \beta_t^*}{\pi} \sqrt{\frac{3n}{2}}. \]

\end{cor}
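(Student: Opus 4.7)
The plan is to obtain Corollary \ref{avgcor} by dividing the asymptotics for $a_t^*(n)$ and $b_t^*(n)$ established in Theorem \ref{theorem2}(1) and (2) by an asymptotic for $|\mathcal{SC}(n)| = |\mathcal{DO}(n)|$. Euler's classical bijection between self-conjugate partitions and partitions into distinct odd parts gives the equality of sizes, and both classes share the common generating function $\sum_{n \geq 0} |\mathcal{SC}(n)|\, q^n = (-q;q^2)_\infty = \xi(q)$.

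First I would establish the asymptotic
\begin{equation*} |\mathcal{SC}(n)| \;\sim\; \frac{1}{2^{7/4}\, 3^{1/4}\, n^{3/4}}\, e^{\pi\sqrt{n/6}}. \end{equation*}
All the analytic input needed is already in the paper: the major-arc asymptotic $\xi(e^{-z}) \sim \exp(\pi^2/(24z))$ (Proposition \ref{xiH2}, i.e., $K=1$, $A=\pi^2/24$) and the minor-arc bound of Proposition \ref{xiH4}. Proposition \ref{circlemethod} is phrased for a product $L(q)\xi(q)$ with $L$ contributing a simple pole at $z=0$; for the bare series $\xi(q)$ the same saddle-point argument yields Wright's classical formula
\begin{equation*} [q^n]\,\xi(q) \;\sim\; \frac{K A^{1/4}}{2\sqrt{\pi}\, n^{3/4}}\, e^{2\sqrt{An}}, \end{equation*}
and substituting $K=1$, $A=\pi^2/24$ (together with $A^{1/4} = \sqrt{\pi}/24^{1/4}$ and $24^{1/4}=2^{3/4}\,3^{1/4}$) gives the displayed asymptotic.

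Next I would form the ratio with Theorem \ref{theorem2}(1): the factors $e^{\pi\sqrt{n/6}}$ cancel, leaving
\begin{equation*} \mathrm{avg}_{\mathcal{SC}}(t;n) \;=\; \frac{a_t^*(n)}{|\mathcal{SC}(n)|} \;\sim\; \frac{\sqrt[4]{3}/(2\pi \sqrt[4]{2}\cdot n^{1/4})}{1/(2^{7/4}\, 3^{1/4}\, n^{3/4})} \;=\; \frac{\sqrt{6}}{\pi}\sqrt{n} \;=\; \frac{2}{\pi}\sqrt{\tfrac{3n}{2}}. \end{equation*}
Dividing Theorem \ref{theorem2}(2) by the same denominator for $|\mathcal{DO}(n)|$ gives the asymptotic for $\mathrm{avg}_{\mathcal{DO}}(t;n)$, with the additional factor $2\beta_t^*$ coming from the ratio of leading constants of $b_t^*(n)$ and $a_t^*(n)$, producing $\frac{4\beta_t^*}{\pi}\sqrt{3n/2}$ after the same simplification.

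The main (and quite mild) obstacle is justifying the $|\mathcal{SC}(n)|$ asymptotic, since Proposition \ref{circlemethod} is not literally stated for the bare $\xi$ case. This is a one-line adaptation of the circle-method argument already deployed for $A_t^*(q)$ and $B_t^*(q)$ in Section \ref{sectionasymptotics} (or, alternatively, one may cite Hagis's classical asymptotic for partitions into distinct odd parts). Once that asymptotic is in hand, everything reduces to the arithmetic simplifications displayed above.
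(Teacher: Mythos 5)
Your proposal is correct and follows essentially the same route as the paper: the paper likewise quotes the asymptotic $p_{\mathcal{SC}}(n)=p_{\mathcal{DO}}(n)\sim \frac{\sqrt[4]{2}}{4\sqrt[4]{3}\,n^{3/4}}e^{\pi\sqrt{n/6}}$ (which equals your $2^{-7/4}3^{-1/4}n^{-3/4}e^{\pi\sqrt{n/6}}$), notes that it follows from Propositions \ref{xiH2} and \ref{xiH4} via Proposition \ref{circlemethod} applied to $(-q;q^2)_\infty$, and then divides the asymptotics of Theorem \ref{theorem2}(1) and (2) by it. Your only added content is carrying out the constant arithmetic explicitly, and your worry about the ``bare $\xi$'' case is easily dispatched by taking $L\equiv 1$ (so $a_0=0$, $a_1=1$) in Proposition \ref{circlemethod}.
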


\begin{proof}
Let $p_{\mathcal{SC}}(n) = |\mathcal{\mathcal{SC}}(n)|$ and $p_{\mathcal{DO}}(n) = |\mathcal{DO}(n)|$. It is a well-known fact that 
\begin{equation}\label{totalasymp} p_{\mathcal{SC}}(n) = p_{\mathcal{DO}}(n) \sim \frac{\sqrt[4]{2}}{4 \sqrt[4]{3} \cdot n^{\frac{3}{4}}} e^{ \pi \sqrt{n / 6 }}. \end{equation}

Since $(-q;q^2)_\infty$ is the generating function for $p_{\mathcal{SC}}(n) = p_{\mathcal{DO}}(n)$, this can be directly shown from \ref{xiH2} and \ref{xiH4} using Proposition \ref{circlemethod}. In light of Theorem \ref{theorem2}(1) and (2), \eqref{totalasymp} gives us the result.
\end{proof}

\section{Evaluating $\beta^*_t$}\label{sectionevaluatingbeta}

The object of this section is to evaluate the constant $\beta_t^*$, and our main result is the following formula. Since $\gamma_t^* = \frac{1}{2 \beta_t^*}$, Theorem \ref{betastart} additionally produces an explicit formula for $\gamma_t^*$. 

\begin{thm}\label{betastart} 
When $t$ is even, we have that
\small
\begin{align*} &\beta^*_t  = \frac{1}{2} \Bigg( \sum_{\substack{n \equiv \frac{t}{2} - 2 \pmod{3} \\ n > 0 }}^{\frac{t}{2}-2} \frac{1}{n} - \sum_{\substack{n \equiv \frac{t}{2} - 2 \pmod{3} \\ n > 0 }}^{\frac{t}{2} - 2} \frac{1}{n} \left( \frac{1}{2} + \mathbbm{1}_{2 \mid n} \frac{(-1)^{n/2}}{2^n} - \frac{3}{2} \mathbbm{1}_{3 \mid n} \frac{1}{2^{2n/3}} \right) + \mathbbm{1}_{6 \mid (t-4)} \int_{1/2}^1 \frac{(1-x)^{\frac{t+2}{3}-1}}{x} dx \\
&- \sum_{\substack{n \equiv \frac{t}{2}-2 \pmod{3} \\ n > 0 }}^{\frac{t}{2} - 2} \frac{1}{n+3/2} \left( \frac{1}{2} + \frac{1}{4} \mathbbm{1}_{2 \mid n} \frac{(-1)^{n/2}}{2^n} + \frac{1}{4} \mathbbm{1}_{2 \mid (n-1)} \frac{(-1)^{(n-1)/2}}{2^n} - \frac{3}{4} \mathbbm{1}_{3 \mid n} \frac{1}{2^{2n/3}} \right) \\
&+ \sum_{\substack{n \equiv \frac{t}{2} - 1 \pmod{3} \\ n > 0 }}^{\frac{t}{2}-1} \frac{1}{n} - \sum_{\substack{n \equiv \frac{t}{2} - 1 \pmod{3} \\ n > 0 }}^{\frac{t}{2} - 1} \frac{1}{n} \left( \frac{1}{2} + \mathbbm{1}_{2 \mid n} \frac{(-1)^{n/2}}{2^n} - \frac{3}{2} \mathbbm{1}_{3 \mid n} \frac{1}{2^{2n/3}} \right) + \mathbbm{1}_{6 \mid (t-2)}   \int_{1/2}^1 \frac{(1-x)^{\frac{t+1}{3}-1}}{x} dx \\
&- \sum_{\substack{n \equiv \frac{t}{2}-1 \pmod{3} \\ n > 0 }}^{\frac{t}{2} - 4} \frac{1}{n+3/2} \left( \frac{1}{2} + \frac{1}{4} \mathbbm{1}_{2 \mid n} \frac{(-1)^{n/2}}{2^n} + \frac{1}{4} \mathbbm{1}_{2 \mid (n-1)} \frac{(-1)^{(n-1)/2}}{2^n} - \frac{3}{4} \mathbbm{1}_{3 \mid n} \frac{1}{2^{2n/3}} \right) \Bigg). \end{align*}
\normalsize

When $t$ is odd, we have that
\small
\begin{align*} \beta^*_t  &= \frac{1}{2} \Bigg( \sum_{\substack{n \equiv \frac{t-1}{2} \pmod{3} \\ n > 0 }}^{\frac{t-1}{2}} \frac{1}{n} - \sum_{\substack{n \equiv \frac{t-1}{2} \pmod{3} \\ n > 0 }}^{\frac{t-1}{2}} \frac{1}{n} \left( \frac{1}{2} + \mathbbm{1}_{2 \mid n} \frac{(-1)^{n/2}}{2^n} - \frac{3}{2} \mathbbm{1}_{3 \mid n} \frac{1}{2^{2n/3}} \right) + \mathbbm{1}_{6 \mid (t-1)}   \int_{1/2}^1 \frac{(1-x)^{\frac{t+2}{3}-1}}{x} dx \\
&- \sum_{\substack{n \equiv \frac{t-1}{2} \pmod{3} \\ n > 0 }}^{\frac{t-1}{2}-3} \frac{1}{n+3/2} \left( \frac{1}{2} + \frac{1}{4} \mathbbm{1}_{2 \mid n} \frac{(-1)^{n/2}}{2^n} + \frac{1}{4} \mathbbm{1}_{2 \mid (n-1)} \frac{(-1)^{(n-1)/2}}{2^n} - \frac{3}{4} \mathbbm{1}_{3 \mid n} \frac{1}{2^{2n/3}} \right)  \\ &+ \sum_{\substack{n \equiv \frac{t-5}{2} \pmod{3} \\ n > 0 }}^{\frac{t-5}{2}} \frac{1}{n} - \sum_{\substack{n \equiv \frac{t-5}{2} \pmod{3} \\ n > 0 }}^{\frac{t-5}{2}} \frac{1}{n} \left( \frac{1}{2} + \mathbbm{1}_{2 \mid n} \frac{(-1)^{n/2}}{2^n} - \frac{3}{2} \mathbbm{1}_{3 \mid n} \frac{1}{2^{2n/3}} \right) + \mathbbm{1}_{6 \mid (t-5)} \int_{1/2}^1 \frac{(1-x)^{\frac{t+1}{3}-1}}{x} dx \\
&- \sum_{\substack{n \equiv \frac{t-5}{2} \pmod{3} \\ n > 0 }}^{\frac{t-5}{2}} \frac{1}{n+3/2} \left( \frac{1}{2} + \frac{1}{4} \mathbbm{1}_{2 \mid n} \frac{(-1)^{n/2}}{2^n} + \frac{1}{4} \mathbbm{1}_{2 \mid (n-1)} \frac{(-1)^{(n-1)/2}}{2^n} - \frac{3}{4} \mathbbm{1}_{3 \mid n} \frac{1}{2^{2n/3}} \right) \Bigg). \end{align*}
\end{thm}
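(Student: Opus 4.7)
The plan is to explicitly evaluate each integral $I(a,c)$, substitute into the definitions \eqref{betastarteven}--\eqref{betastartodd} of $\beta^*_t$, and reorganize the resulting expressions to match the stated closed form.

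First, applying the substitutions $u = e^{-2x}$ followed by $v = 1/(1+u)$ transforms
\[
I(a,c) = \frac{1}{2}\int_{1/2}^1 (1-v)^{a/2-1}\, v^{c-a/2-1}\, dv.
\]
Since $a/2 - 1$ is a nonnegative integer and $c - a/2 - 1$ is an integer, expanding $(1-v)^{a/2-1}$ by the binomial theorem and integrating term by term writes $I(a,c)$ as a finite combination of $\ln 2$ (appearing exactly when $1 \le c \le a/2$) and elementary rationals of the form $(1 - 2^{-M})/M$ for integers $M = c - a/2 + j \neq 0$. Substituting back into \eqref{betastarteven}, twice the first sum equals the single integral $\int_{1/2}^1 \Phi_t^{(1)}(v)\,dv$ with
\[
\Phi_t^{(1)}(v) := \sum_{k=0}^{\lfloor(t-4)/6\rfloor}\binom{(t-2k-2)/2}{2k+1}\, (1-v)^{2k+1}\, v^{(t-6k-6)/2},
\]
and similarly for the second sum and for odd $t$; the problem thereby reduces to evaluating four integrals of Laurent-polynomial integrands over $[1/2, 1]$.

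Next, I would find a closed form for $\Phi_t^{(i)}(v)$ using the generating-function identity $\sum_{k \ge 0}\binom{m-k}{2k+1}z^k = [x^m]\,\frac{x}{(1-x)^2 - zx^3}$, applied with $m = (t-2)/2$ and $z = (1-v)^2/v^3$ (with an analogous identity for $\Phi_t^{(2)}$). The cubic denominator $(1-x)^2 - zx^3$ factors over $\mathbb{Q}(v)$ as a linear factor times an irreducible quadratic, and it is this factorization --- after integrating over $[1/2, 1]$ and applying partial fractions --- that produces two kinds of harmonic contributions: those with denominator $n$ (from the linear factor), yielding $\sum_n \tfrac{1}{n}$ over the congruence class $n \equiv \tfrac{t}{2} - 2 \pmod 3$; and those with denominator $n + 3/2$ (from the quadratic factor, the shift $3/2$ being exactly half the sum of its roots), yielding $\sum_n \tfrac{1}{n + 3/2}$. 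Evaluations at the endpoint $v = 1/2$ generate the weights $2^{-n}$, $(-1)^{n/2}2^{-n}$, and $2^{-2n/3}$, the last reflecting the cube-root-of-unity action on the quadratic's complex roots and accounting for the indicators $\mathbbm{1}_{3 \mid n}$. The logarithmic integral $\int_{1/2}^1 (1-x)^{(t+2)/3 - 1}/x\, dx$, selected by $\mathbbm{1}_{6 \mid (t-4)}$, appears precisely when $\Phi_t^{(1)}(v)$ has a genuine $v^{-1}$ pole, which occurs for exactly one residue of $t$ modulo $6$; the analogous conditions for the second sum and for odd $t$ shift the congruence classes and indicator supports accordingly.

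The principal obstacle is the explicit partial-fraction decomposition and the accompanying bookkeeping: once the roots of the cubic are identified and each summand is integrated over $[1/2, 1]$, the resulting harmonic sums and boundary contributions must be matched one by one with the eight pieces of the theorem's formula. This matching is a careful symbolic computation, cross-classified by the residue of $t$ modulo $6$ (controlling the logarithmic indicator) and by the residue of $n$ modulo $6$ (controlling the sign $(-1)^{n/2}$ and the cube-weight $2^{-2n/3}$). I would handle even and odd $t$ in parallel and verify the formula against small cases (for instance, $\beta^*_2 = \tfrac{\ln 2}{2}$, $\beta^*_3 = \tfrac14$, and $\beta^*_{10} = \tfrac{\ln 2}{2} - \tfrac{5}{128}$) to calibrate signs and conventions along the way.
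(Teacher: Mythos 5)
Your opening move coincides with the paper's: the substitution you describe turns $I(a,c)$ into $\frac{1}{2}\int_{1/2}^{1}(1-v)^{a/2-1}v^{c-a/2-1}\,dv$, which is exactly the paper's $\frac12\int_1^2(u-1)^{2k+r-1}u^{-(t-2k+s)/2}\,du$ from Proposition \ref{Srst} under $v=1/u$, and your identification of when the $\log 2$ term survives agrees with the paper's condition $(k,i)=\left(\frac{t-2r+s}{6},0\right)$. Your calibration values $\beta_2^*=\tfrac{\ln 2}{2}$, $\beta_3^*=\tfrac14$, $\beta_{10}^*=\tfrac{\ln2}{2}-\tfrac{5}{128}$ are all correct. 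From there you diverge: the paper expands $(u-1)^{2k+r-1}$ binomially, integrates term by term, swaps the order of the resulting double sum, splits by the parity of the inner index, and invokes a separately proved combinatorial identity (Lemma \ref{binomidentity}); you instead propose to perform the $k$-sum under the integral sign via $\sum_k\binom{m-k}{2k+1}z^k=[x^m]\,x/((1-x)^2-zx^3)$ and then integrate a partial-fraction decomposition over $\mathbb{Q}(v)$. The two routes rest on the same algebraic fact --- at $z=2$ (your endpoint $v=1/2$) the cubic factors as $(1-2x)(1+x^2)$, which is precisely the denominator behind the expression $\frac{x}{5}\left(\frac{x+3}{1+x^2}+\frac{2}{1-2x}\right)$ in the paper's proof of Lemma \ref{binomidentity} --- so your alternative organization is plausible in principle.

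The gap is that the decisive computation is deferred rather than done. The content of the theorem is exactly the ``careful symbolic computation'' you postpone: in the paper it occupies Proposition \ref{almostthetheorem} together with Lemma \ref{binomidentity}, and it is not routine. In particular, the $\mathbbm{1}_{3\mid n}2^{-2n/3}$ terms do not come from ``cube-root-of-unity action on the quadratic's complex roots'' (the quadratic at $z=2$ is $1+x^2$, whose roots are fourth roots of unity and account for the $(-1)^{n/2}$ signs); they arise as a boundary correction in the summation range, see \eqref{adjust}, where the term $i=(n-3)/3$ is present in one sum and absent in the other exactly when $3\mid n$. Likewise, the split between the $1/n$ sums and the $1/(n+3/2)$ sums is governed by the parity of the inner summation index (the two identities \eqref{eveneq} and \eqref{oddeq}), and the slightly different upper limits $\tfrac t2-2$ versus $\tfrac t2-4$ among the four pieces of the even case trace back to that parity split; none of this is visible from the factorization alone. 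So as written the proposal is a sensible plan with correctly computed checkpoints, but the derivation of the eight-piece formula --- which is the theorem --- remains to be carried out, and at least one of the mechanisms you cite for where the terms come from is misattributed.
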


\begin{cor}\label{cor} We have the following. 
\begin{enumerate} \item $\beta_t^* \in \mathbb{Q}(\log(2))$ for all $t$,
\item $\beta_t^* \in \mathbb{Q}$ if and only if $t$ is a multiple of $3$. 
\end{enumerate}
\end{cor}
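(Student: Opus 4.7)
The plan is to show that the explicit formula in Theorem~\ref{betastart} naturally splits into a rational part and a $\mathbb{Q}$-multiple of $\log(2)$, and then to analyze the indicator functions to see when the $\log(2)$ term is present.

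First I would inspect the formula in Theorem~\ref{betastart} term by term. Every sum appearing there is a finite sum of expressions of the form
\[
\frac{1}{n},\quad \frac{1}{n+3/2},\quad \frac{(-1)^{n/2}}{2^n},\quad \frac{1}{2^{2n/3}},
\]
with $n$ a positive integer in the appropriate residue class. All such summands are rational, so the total contribution of the four outer sums (two for each of the two ``halves'' of the formula) is a rational number. Hence the only way irrationality can enter $\beta^*_t$ is through the two integrals
\[
I_m \;:=\; \int_{1/2}^1 \frac{(1-x)^{m-1}}{x}\,dx,
\]
weighted by the indicator functions $\mathbbm{1}_{6 \mid (t-4)}$, $\mathbbm{1}_{6 \mid (t-2)}$ (for $t$ even) or $\mathbbm{1}_{6 \mid (t-1)}$, $\mathbbm{1}_{6 \mid (t-5)}$ (for $t$ odd).

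Next I would evaluate $I_m$ for integer $m \geq 1$ by expanding $(1-x)^{m-1}$ by the binomial theorem. The $j=0$ term produces $\int_{1/2}^1 \tfrac{dx}{x} = \log(2)$, while each $j\geq 1$ term yields
\[
(-1)^j \binom{m-1}{j}\int_{1/2}^1 x^{j-1}\,dx \;=\; \frac{(-1)^j}{j}\binom{m-1}{j}\bigl(1 - 2^{-j}\bigr) \;\in\; \mathbb{Q}.
\]
Therefore $I_m = \log(2) + r_m$ for some explicit $r_m \in \mathbb{Q}$. This immediately gives $\beta^*_t \in \mathbb{Q} + \mathbb{Q}\log(2) \subset \mathbb{Q}(\log 2)$, proving part (1).

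For part (2), the key point is that among the four indicator functions, at most one can be nonzero for any given $t$, since $t \equiv 4 \pmod 6$ and $t \equiv 2 \pmod 6$ are mutually exclusive (and similarly in the odd case). Writing out the congruence classes:
\begin{itemize}
\item If $t \equiv 0 \pmod 3$, none of $\mathbbm{1}_{6 \mid (t-4)}, \mathbbm{1}_{6 \mid (t-2)}, \mathbbm{1}_{6 \mid (t-1)}, \mathbbm{1}_{6 \mid (t-5)}$ equals $1$, so no integral appears and $\beta^*_t \in \mathbb{Q}$.
\item If $t \not\equiv 0 \pmod 3$, exactly one indicator equals $1$, contributing a single term $\tfrac{1}{2} I_m = \tfrac{1}{2}\log(2) + \tfrac{1}{2}r_m$ to $\beta^*_t$.
\end{itemize}
In the latter case the coefficient of $\log(2)$ in $\beta^*_t$ is $\tfrac{1}{2}\neq 0$. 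Since $\log(2)$ is transcendental, $\beta^*_t$ cannot be rational. This gives part (2).

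The main obstacle I anticipate is purely bookkeeping: checking carefully that the indicator conditions $6 \mid (t-4)$, $6 \mid (t-2)$, $6 \mid (t-1)$, $6 \mid (t-5)$ cover precisely the residues $t \not\equiv 0 \pmod 3$ (broken up by parity), and are pairwise disjoint within a fixed parity, so that the $\log(2)$ contributions neither cancel nor double up. Everything else reduces to the elementary evaluation of $I_m$ above and the observation that the remaining sums are manifestly rational.
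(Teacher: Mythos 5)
Your proposal is correct and follows essentially the same route as the paper: identify the finite sums as manifestly rational, expand the integral $\int_{1/2}^1 (1-x)^{m-1}x^{-1}\,dx$ to isolate a single $\log(2)$ plus a rational remainder, and observe that exactly one such integral appears precisely when $3 \nmid t$. Your version is just a more detailed write-up of the paper's two-sentence argument (and note that irrationality of $\log 2$ already suffices; transcendence is not needed).
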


\begin{proof}
    The only irrational terms in Theorem \ref{betastart} come from $\frac{1}{x}$ in the expansion of the integrand of 
    \[ \int_{1/2}^1 \frac{(1-x)^{\lceil \frac{t}{3} \rceil -1}}{x} dx \]
    and are equal to $\log(2)$. An integral of this form emerges if and only if $t$ belongs to a residue mod $6$ that is not divisible by $3$. 
\end{proof}

To compute $\beta_t^*$, we introduce the parameters $r$ and $s$ and evaluate the quantity 
\begin{equation*} S_{r,s}(t) := \sum_{k = 0}^{\lfloor (t-2r+s)/6 \rfloor} \binom{\frac{t-2k-2+s}{2}}{2k+r-1} I \left(2(2k+r), \frac{t-2k +s}{2} \right). \end{equation*}
From \eqref{betastarteven} and \eqref{betastartodd}, we have that 
\begin{equation}\label{betainS}
    \beta_t^* = \begin{cases} S_{2,0}(t) + S_{1,0}(t) \text{ for } t \text{ even,} \\ S_{1,1}(t) + S_{2,-1}(t) \text{ for } t \text{ odd.} \end{cases}
\end{equation}

\subsection{Simplifying $S_{r,s}(t)$}

\begin{prop}\label{Srst} Let $t \geq 2$ be a positive integer. Then, we have  
\begin{equation*} S_{r,s}(t) = \frac{1}{2} \sum_{k=0}^{\lfloor \frac{t-2r+s}{6} \rfloor} \binom{ \frac{t-2k -2 + s}{2}}{2k + r -1} \sum_{i=0}^{2k+r-1} \binom{2k+r-1}{i} \frac{(-1)^i}{\frac{t+s}{2} - 3k - r + i} \left( 1 - \frac{1}{2^{\frac{t+s}{2} -3k -r + i}} \right) \end{equation*}
when $t - 2r + s$ is not a multiple of $6$ and \begin{align*} S_{r,s}(t) = \frac{1}{2} \Bigg( \log 2 &+ \sum_{k=0}^{\frac{t-2r+s}{6} - 1} \binom{ \frac{t-2k -2 + s}{2}}{2k + r -1} \sum_{i=0}^{2k+r-1} \binom{2k+r-1}{i} \frac{(-1)^i}{\frac{t+s}{2} - 3k - r + i} \left( 1 - \frac{1}{2^{\frac{t+s}{2} -3k -r + i}} \right) 
\\ &+ \sum_{i=1}^{\frac{t+r+s}{3}-1} \binom{\frac{t+r+s}{3}-1}{i} \frac{(-1)^i}{i} \left( 1 - \frac{1}{2^i} \right) \Bigg). \end{align*} 
\end{prop}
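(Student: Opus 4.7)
My strategy is to evaluate each integral $I(2(2k+r),\,(t-2k+s)/2)$ in closed form and then isolate the single boundary index for which the evaluation yields $\log 2$ rather than a rational expression. Starting from
$$I(a,c) = \int_0^\infty \frac{e^{-ax}}{(1+e^{-2x})^c}\,dx,$$
the substitution $u = e^{-2x}$ followed by $v = 1+u$ gives
$$I\left(2(2k+r),\,\tfrac{t-2k+s}{2}\right) = \frac{1}{2}\int_1^2 (v-1)^{2k+r-1}\, v^{-(t-2k+s)/2}\, dv.$$
Expanding $(v-1)^{2k+r-1}$ by the binomial theorem then reduces the integral to a finite combination of elementary integrals $\int_1^2 v^{m}\,dv$ with $m = 3k+r-1-i-(t+s)/2$.

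Set $n := (t+s)/2 - 3k - r + i$, so that $m = -n-1$. For $n \neq 0$ the elementary integral equals $\frac{1}{n}(1 - 2^{-n})$, while for $n = 0$ it equals $\log 2$. The outer summation bound $k \leq \lfloor(t-2r+s)/6\rfloor$ together with $i \geq 0$ forces $n \geq 0$, with equality if and only if both $i = 0$ and $k = (t-2r+s)/6$. The latter is an admissible summation index precisely when $6 \mid (t-2r+s)$, which is exactly the case split asserted in the proposition.

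When $6 \nmid (t-2r+s)$, substituting the rational evaluation uniformly yields the first displayed identity. When $6 \mid (t-2r+s)$, I would peel off the boundary term $k=(t-2r+s)/6$ from the outer sum and handle it separately. A direct computation shows that at this value of $k$ both $(t-2k-2+s)/2$ and $2k+r-1$ equal $(t+r+s)/3 - 1$, so the outer binomial coefficient collapses to $1$. Splitting the remaining inner sum into the $i=0$ contribution (which supplies $\log 2$) and the $i \geq 1$ tail (which supplies the terminating binomial sum) produces the second stated identity. The main obstacle is purely combinatorial bookkeeping—tracking the sign conventions, confirming the collapse of the outer binomial coefficient, and matching the upper summation bound $(t+r+s)/3 - 1$ to the value of $2k+r-1$ at the boundary—rather than any analytic difficulty.
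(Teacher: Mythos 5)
Your proposal is correct and follows essentially the same route as the paper: the substitution $u=e^{-2x}+1$, binomial expansion of $(u-1)^{2k+r-1}$, the observation that $\int_1^2 u^{-n-1}\,du$ equals $\tfrac{1}{n}(1-2^{-n})$ unless $n=0$ (which occurs only at $(k,i)=(\tfrac{t-2r+s}{6},0)$, an admissible index exactly when $6\mid(t-2r+s)$), and the isolation of that boundary term where the outer binomial coefficient collapses to $\binom{(t+r+s)/3-1}{(t+r+s)/3-1}=1$. No gaps; the bookkeeping you defer is exactly what the paper carries out.
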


\begin{proof} We begin by computing the integral $I \left(2(2k+r), \frac{t-2k +s}{2} \right)$. Substituting $u = e^{-2x} + 1$ yields
\begin{equation*} I \left(2(2k+r), \frac{t-2k +s}{2} \right) = \frac{1}{2} \int_1^2 \frac{(u-1)^{2k + r - 1}}{u^{\frac{t-2k+s}{2}}} du = \sum_{i=0}^{2k+r-1} \frac{(-1)^i}{2} \binom{2k+r-1}{i} \int_1^2 u^{3k + r - 1 - \frac{t+s}{2} - i} du. \end{equation*}

We focus on the inner integral. Since $k \leq \lfloor (t-2r+s)/6 \rfloor \leq (t-2r+s)/6$, we have that $3k + r - 1 - \frac{t+s}{2} - i \leq -i$ with equality only if $k = (t - 2r + s) /6$. In particular, we have that $3k + r - 1 - \frac{t+s}{2} - i = 0$ if and only if $(k,i) = \left((t - 2r + s)/6, 0 \right)$. Thus, we obtain
\begin{align*} 
\int_1^2 u^{3k + r - 1 - \frac{t+s}{2} - i} du = \begin{cases} 
\frac{1}{\frac{t+s}{2} - 3k - r + i} \left( 1 - \frac{1}{2^{\frac{t+s}{2} - 3k - r + i}} \right) & \text{ if } (k, i) \neq \left( \frac{t-2r+s}{6}, 0 \right), \\ \log 2 & \text{ if } (k, i) = \left( \frac{t-2r+s}{6}, 0 \right). 
\end{cases} 
\end{align*}
Thus, if $t - 2r + s$ is not a multiple of $6$, the $\log(2)$ term does not appear and we have
\begin{equation*} S_{r,s}(t) = \frac{1}{2} \sum_{k=0}^{\lfloor \frac{t-2r+s}{6} \rfloor} \binom{ \frac{t-2k -2 + s}{2}}{2k + r -1} \sum_{i=0}^{2k+r-1} \binom{2k+r-1}{i} \frac{(-1)^i}{\frac{t+s}{2} - 3k - r + i} \left( 1 - \frac{1}{2^{\frac{t+s}{2} -3k -r + i}} \right).
\end{equation*}
If $t - 2r + s$ is a multiple of $6$, we isolate the term associated with $\left( \frac{t-2r+s}{6}, 0 \right)$ and obtain 
\begin{align*} S_{r,s}(t) = \frac{1}{2} \Bigg( \log 2 &+ \sum_{k=0}^{\frac{t-2r+s}{6} - 1} \binom{ \frac{t-2k -2 + s}{2}}{2k + r -1} \sum_{i=0}^{2k+r-1} \binom{2k+r-1}{i} \frac{(-1)^i}{\frac{t+s}{2} - 3k - r + i} \left( 1 - \frac{1}{2^{\frac{t+s}{2} -3k -r + i}} \right) 
\\ &+ \sum_{i=1}^{\frac{t+r+s}{3}-1} \binom{\frac{t+r+s}{3}-1}{i} \frac{(-1)^i}{i} \left( 1 - \frac{1}{2^i} \right) \Bigg), \end{align*}
where the last term comes from substituting $k = \frac{t -2r+s}{6}$. \end{proof}


\subsection{A Combinatorial Interlude}

The following lemma will be useful in our computation of $\beta^*_t$. 

\begin{lem}\label{binomidentity}
Let $n > 0$ and set $n = 3p + q$. We have the two identities 
\begin{equation}\label{eveneq} \sum_{i=0}^{p+ \lfloor (q - 1)/3 \rfloor} \frac{1}{n-3i} \binom{n-i-1}{2i} 2^i = \frac{1}{n} \left( 2^{n-1} + \mathbbm{1}_{2 \mid n} (-1)^{n/2} + \mathbbm{1}_{3 \mid n} (-3 \cdot 2^{n/3 - 1} ) \right),  \end{equation} 
\begin{align}\label{oddeq} \sum_{i=0}^{p+ \lfloor (q - 1)/3 \rfloor} \frac{1}{n-3i} \binom{n-i}{2i+1} 2^i &= \frac{1}{n+3/2} \Bigg( 2^{n} + \mathbbm{1}_{2 \mid n} \left(\frac{1}{2} \cdot (-1)^{n/2} \right) \\ 
&+ \mathbbm{1}_{2 \mid (n-1)} \left( \frac{1}{2} \cdot (-1)^{n-1/2} \right) + \mathbbm{1}_{3 \mid n} (-3 \cdot 2^{n/3 - 1} ) \Bigg). \nonumber \end{align}
\end{lem}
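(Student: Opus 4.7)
The plan is to reduce each identity to coefficient extraction from an explicit logarithmic generating function. The first key observation is that for $n-3i\geq 1$, one has the factorial-rewriting identities
\[ \tfrac{1}{n-3i}\tbinom{n-i-1}{2i} = \tfrac{1}{n-i}\tbinom{n-i}{2i}, \qquad \tfrac{1}{n-3i}\tbinom{n-i}{2i+1} = \tfrac{1}{n-i+1}\tbinom{n-i+1}{2i+1}. \]
The right-hand forms extend meaningfully to $i=n/3$ when $3\mid n$, contributing the boundary values $3\cdot 2^{n/3-1}/n$ and $3\cdot 2^{n/3-1}/(n+3/2)$ respectively (at $i=n/3$ the left-hand sides have the indeterminate form $0/0$). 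Defining the extended sums
\[ S'(n) := \sum_{i=0}^{\lfloor n/3\rfloor}\frac{2^i}{n-i}\binom{n-i}{2i}, \qquad T'(n) := \sum_{i=0}^{\lfloor n/3\rfloor}\frac{2^i}{n-i+1}\binom{n-i+1}{2i+1}, \]
the identities \eqref{eveneq} and \eqref{oddeq} reduce to proving closed formulas for $S'(n)$ and $T'(n)$ and then subtracting the boundary contribution when $3\mid n$.

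For $S'(n)$, I would interchange summation and use the integral representation $\sum_{m\geq 2i}\binom{m}{2i}x^m/m = \int_0^x t^{2i-1}/(1-t)^{2i+1}\,dt$ for $i\geq 1$ (treating $i=0$ separately as $-\log(1-x)$). Summing the resulting geometric series in $i$ under the integral yields
\[ \sum_{n\geq 1}S'(n)x^n = -\log(1-x) + \int_0^x \frac{2xt\,dt}{(1-t)((1-t)^2 - 2xt^2)}. \]
Factoring $(1-t)^2 - 2xt^2 = (1-(1+\sqrt{2x})t)(1-(1-\sqrt{2x})t)$, performing partial fractions in $t$, and integrating collapses the three logarithms to $-\tfrac{1}{2}\log((1-x)^2-2x^3) = -\tfrac{1}{2}\log((1-2x)(1+x^2))$. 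Expanding each factor via Taylor series of $-\log$ extracts $S'(n) = (2^{n-1}+\mathbbm{1}_{2\mid n}(-1)^{n/2})/n$, and subtracting the boundary term yields \eqref{eveneq}.

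For $T'(n)$ the analogous manipulation produces
\[ \sum_{n\geq 0}T'(n)x^n = \frac{1}{x}\int_0^x \frac{dt}{(1-t)^2 - 2xt^2} = \frac{1}{2x\sqrt{2x}}\log\frac{1-x+x\sqrt{2x}}{1-x-x\sqrt{2x}}. \]
Setting $u=\sqrt{2x}$ and using the polynomial factorizations $2 \mp u^2 \pm u^3 = (1\pm u)(u^2 \mp 2u + 2)$, the generating function becomes $u^{-3}[(\log(1+u)-\log(1-u)) + (\log(u^2-2u+2) - \log(u^2+2u+2))]$. The first difference expands as $2\sum_{j\geq 0}u^{2j+1}/(2j+1)$; for the second, factoring $u^2+2u+2 = ((1+i)+u)((1-i)+u)$ shows it is odd in $u$, producing odd powers whose coefficients involve $\cos((2j+1)\pi/4)$. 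Reading off $[x^n]=[u^{2n+2}]$ and using the trigonometric simplification $-\cos((2n+3)\pi/4)/\sqrt{2} = (\cos(n\pi/2)+\sin(n\pi/2))/2 = (-1)^{\lfloor n/2\rfloor}/2$ gives $T'(n) = (2^n + (-1)^{\lfloor n/2\rfloor}/2)/(n+3/2)$; correcting for the boundary term yields \eqref{oddeq}.

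The main technical obstacle is the odd case: one must verify that the apparently branch-valued expression $\frac{1}{2x\sqrt{2x}}\log(\cdots)$ actually lies in $\mathbb{Q}[[x]]$ and extract its coefficients cleanly. This works because the logarithm inside is odd in $u=\sqrt{2x}$, reflected in the symmetry $2-u^2+u^3 \leftrightarrow 2-u^2-u^3$ under $u\mapsto -u$; the factor $u^{-3}$ then converts the odd-$u$ expansion to integer powers of $x$, with the vanishing of the would-be $u^{-2}$ term (arising from $1 - 2^{1/2}\cos(\pi/4) = 0$) serving as a useful consistency check.
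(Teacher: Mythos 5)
Your proposal is correct, and it takes a genuinely different route from the paper. The paper first computes the \emph{unweighted} sums $\sum_i\binom{n-i-1}{2i}2^i$ and $\sum_i\binom{n-i}{2i+1}2^i$ via the rational generating function $\frac{x}{5}\bigl(\frac{x+3}{1+x^2}+\frac{2}{1-2x}\bigr)$, and then reintroduces the weight by splitting $\frac{1}{n-3i}=\frac{1}{n}+\frac{3i}{n(n-3i)}$, absorbing the extra factor of $i$ into a shifted binomial sum, and doing careful truncation bookkeeping (its equation \eqref{adjust}) with a case analysis on $n$ modulo $6$. You instead absorb the weight at the outset via $\frac{1}{n-3i}\binom{n-i-1}{2i}=\frac{1}{n-i}\binom{n-i}{2i}$ and $\frac{1}{n-3i}\binom{n-i}{2i+1}=\frac{1}{n-i+1}\binom{n-i+1}{2i+1}$ (both of which I checked are valid for $n-3i\geq 1$, with the correct boundary values $3\cdot 2^{n/3-1}/n$ and $3\cdot 2^{n/3-1}/(n+3/2)$ at $i=n/3$), which converts the rational generating function into a logarithmic one: $-\tfrac12\log((1-2x)(1+x^2))$ in the even case, and the $u=\sqrt{2x}$ expression in the odd case. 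I verified both closed forms, the factorizations $2\mp u^2\pm u^3=(1\pm u)(u^2\mp 2u+2)$, and the trigonometric identity $-\cos((2n+3)\pi/4)/\sqrt{2}=(-1)^{\lfloor n/2\rfloor}/2$; the coefficients of the logarithms reproduce exactly the right-hand sides of \eqref{eveneq} and \eqref{oddeq} after removing the $i=n/3$ boundary term. What your approach buys is that the denominators $\frac1n$ and $\frac{1}{n+3/2}$ emerge automatically from $\int$ or $\log$ rather than by ad hoc manipulation, and the $3\mid n$ correction is a single transparent boundary term; the cost is the fractional-power substitution in the odd case, whose legitimacy you correctly justify by the oddness in $u$ (the paper's route stays entirely within rational functions of $x$). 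One small simplification available to you: the integral representation for $\sum_m\binom{m}{2i}x^m/m$ is an unnecessary detour, since $\frac1m\binom{m}{2i}=\frac{1}{2i}\binom{m-1}{2i-1}$ gives $\frac{1}{2i}\bigl(\frac{x}{1-x}\bigr)^{2i}$ directly and the geometric-to-logarithmic summation in $i$ is then immediate.
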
 

\begin{proof}
We consider the generating function for the quantity $\sum_{i=0}^n \binom{n - i-1}{2i} 2^i$, which can be written
\begin{equation*} \sum_{i \geq 0} (2x^3)^i \ \cdot \ x \sum_{n \geq 3i +1} \binom{n -i - 1}{2i} x^{n - 3i - 1} = \sum_{i \geq 0} (2x^3)^i \frac{x}{(1-x)^{2i+1}} = \frac{x}{5} \left( \frac{x+3}{1+x^2} + \frac{2}{1-2x} \right).
\end{equation*}
Using geometric series expansions, we obtain 
\begin{equation}\label{evenbin}
\sum_{i=0}^n \binom{n - i-1}{2i} 2^i = \begin{cases} \frac{1}{5} \left( 2^{2k} + (-1)^{k-1} \right)   \text{ if } n = 2k, \\ \frac{1}{5} \left( 2^{2k+1} + 3 (-1)^{k} \right) \text{ if } n = 2k + 1.
\end{cases} 
\end{equation}
Similarly, we compute 
\begin{equation}\label{oddbin}
\sum_{i=0}^n \binom{n - i}{2i+1} 2^i = \begin{cases}  \frac{1}{5} \left( 2^{2k+1} + 2 (-1)^{k-1} \right) \text{ if } n = 2k, \\  \frac{1}{5} \left( 2^{2k+2} + (-1)^{k} \right)  \text{ if } n = 2k + 1.
    \end{cases} 
\end{equation}
Note that $2i \leq n - i - 1$ and $2i+1 \leq n -i$ both imply $i \leq \frac{n-1}{3}$. Thus, we have the two expressions 
\begin{equation}\label{goaleq} \sum_{i=0}^{p+ \lfloor (q - 1)/3 \rfloor} \binom{n - i-1}{2i} 2^i = \sum_{i=0}^n \binom{n - i-1}{2i} 2^i \quad \text{and } \sum_{i=0}^{p+ \lfloor (q - 1)/3 \rfloor} \binom{n - i}{2i+1} 2^i = \sum_{i=0}^n \binom{n - i}{2i+1} 2^i \end{equation} 

We now focus on \eqref{eveneq}. To introduce the fraction $\frac{1}{n-3i}$, we write it as $\frac{1}{n} + \frac{3i}{n(n-3i)}$, and we find 
\begin{equation*} \sum_{i=0}^{p+ \lfloor (q - 1)/3 \rfloor} \frac{1}{n-3i} \binom{n - i - 1}{2i} 2^i = \frac{1}{n} \sum_{i=0}^{p+ \lfloor (q - 1)/3 \rfloor} \binom{n-i-1}{2i} 2^i + \frac{3}{2n} \sum_{i=0}^{p+ \lfloor (q - 1)/3 \rfloor} \binom{n-i-1}{2i-1} 2^i \end{equation*}
\begin{equation*} = \frac{1}{n} \sum_{i=0}^{p+ \lfloor (q - 1)/3 \rfloor} \binom{n-i-1}{2i} 2^i + \frac{3}{n} \sum_{i=0}^{p+ \lfloor (q - 1)/3 \rfloor - 1} \binom{(n-2)-i}{2i+1} 2^i. \end{equation*}
The expression $2i + 1 \leq n - i -2$ implies that $i \leq \frac{n-3}{3}$. Thus, the sum $\sum_{i=0}^{{p+ \lfloor (q - 1)/3 \rfloor}-1} \binom{n - 2- i}{2i+1} 2^{i}$ is equal to $\sum_{i=0}^n \binom{n-2-i}{2i+1} 2^i$ if $n \equiv 1, 2 \pmod{3}$, as then $i \leq \frac{n-3}{3}$ with $i \in \ZZ$ implies that $i \leq p+ \lfloor (q - 1)/3 \rfloor - 1$. Otherwise, if $n \equiv 0$, we have that $i = (n-3)/3$ is included in the sum $\sum_{i=0}^n \binom{n-2 - i}{2i+1} 2^{i}$ and not the sum $\sum_{i=0}^{{p+ \lfloor (q - 1)/3 \rfloor}-1} \binom{n-2-i}{2i+1} 2^{i}$. We obtain 
\begin{equation}\label{adjust} \sum_{i=0}^{{p+ \lfloor (q - 1)/3 \rfloor}-1} \binom{n-2-i}{2i+1} 2^{i} = \begin{cases}  \sum_{i=0}^{n} \binom{n-2 -i}{2i+1} 2^{i} - 2^{(n-3)/3} &\text{ if } n \equiv 0 \pmod{3}, \\ \sum_{i=0}^{n} \binom{n-2 -i}{2i+1} 2^{i} &\text{ if } n \equiv 1, 2 \pmod{3}. \end{cases} \end{equation}
Substituting \eqref{evenbin}, \eqref{oddbin}, and \eqref{adjust} into \eqref{goaleq} yields the following: 
\begin{align*} \sum_{i=0}^{p+ \lfloor (q - 1)/3 \rfloor} \frac{1}{n-3i} \binom{n - i - 1}{2i} 2^i = \begin{cases}  \frac{1}{n} \left( 2^{n-1} +  (-1)^k - 3 \cdot 2^{2k-1} \right) &\text{ if } n = 6k, \\ \frac{1}{n} \left( 2^{n-1} \right) &\text{ if } n = 6k+1, \\  \frac{1}{n} \left( 2^{n-1} + (-1)^{k+1} \right) &\text{ if } n = 6k + 2,
\\  \frac{1}{n} \left( 2^{n-1} - 3 \cdot 2^{2k} \right) &\text{ if } n = 6k + 3, \\ \frac{1}{n} \left( 2^{n-1} +  (-1)^{k+2} \right) &\text{ if } n = 6k +4, \\  \frac{1}{n} \left( 2^{n-1} \right) &\text{ if } n = 6k + 5. \end{cases} \end{align*}
This proves \eqref{eveneq}. Similarly, the expression $\sum_{i=0}^{p+ \lfloor (q - 1)/3 \rfloor} \binom{n - i}{2i+1} 2^i$ depends on both the parity of $n$ and its residue mod $3$, and we obtain \eqref{oddeq}.
\end{proof}

\subsection{Proof of Theorem \ref{betastart}}

\begin{prop}\label{almostthetheorem} Let $x = \frac{t+s}{2} - r$ with $t \equiv s \pmod{2}$, and let the quantity $Q_{r,s}(t)$ be
\begin{align*} Q_{r,s}(t) &:= \frac{1}{2} \sum_{\substack{n \equiv x \pmod{3} \\ n > 0 }}^x \frac{1}{n} - \frac{1}{2} \sum_{\substack{n \equiv x \pmod{3} \\ n > 0 }}^{x + 3 \lfloor \frac{r-1}{2} \rfloor} \frac{1}{n} \left( \frac{1}{2} + \mathbbm{1}_{2 \mid n} \frac{(-1)^{n/2}}{2^n} - \frac{3}{2} \mathbbm{1}_{3 \mid n} \frac{1}{2^{2n/3}} \right) \\
&- \frac{1}{2} \sum_{\substack{n \equiv x \pmod{3} \\ n > 0 }}^{x + 3 \lfloor \frac{r-2}{2} \rfloor} \frac{1}{n+3/2} \left( \frac{1}{2} + \frac{1}{4} \mathbbm{1}_{2 \mid n} \frac{(-1)^{n/2}}{2^n} + \frac{1}{4} \mathbbm{1}_{2 \mid (n-1)} \frac{(-1)^{(n-1)/2}}{2^n} - \frac{3}{4} \mathbbm{1}_{3 \mid n} \frac{1}{2^{2n/3}} \right). \end{align*} 
We prove that $S_{r,s}(t)$ to $Q_{r,s}(t)$ are related as follows:
\begin{enumerate}
\item If $t - 2r + s$ is not a multiple of $6$, $S_{r,s}(t) = Q_{r,s}(t)$. 
\item If $t - 2r + s$ is a multiple of $6$, 
\begin{equation*} S_{r,s}(t) = Q_{r,s}(t) + \frac{1}{2} \int_{1/2}^1 \frac{(1-x)^{\frac{t+r+s}{3}-1}}{x} dx. \end{equation*}
\end{enumerate}
\end{prop}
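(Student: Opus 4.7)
The plan is to verify the proposition by reducing it to the combinatorial identities of Lemma \ref{binomidentity}, via standard series manipulations. Set $x = (t+s)/2 - r$ and decompose the expression for $S_{r,s}(t)$ in Proposition \ref{Srst} as $S_{r,s}(t) = \tfrac{1}{2}[\Sigma_1 - \Sigma_2]$ (plus the $\log 2$-correction in the divisibility case), where $\Sigma_1$ collects the summands coming from the ``$1$'' in the factor $(1 - 2^{-(x-3k+i)})$ and $\Sigma_2$ those from $2^{-(x-3k+i)}$. Each piece will then be matched with a corresponding term of $Q_{r,s}(t)$.

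For $\Sigma_1$, apply the partial-fractions identity $\sum_{i=0}^m \binom{m}{i}\tfrac{(-1)^i}{a+i} = \tfrac{1}{a\binom{a+m}{m}}$ with $a = x-3k$ and $m = 2k+r-1$. Since $a+m = x+r-k-1$, the denominator $\binom{a+m}{m}$ cancels the outer binomial $\binom{x+r-k-1}{2k+r-1}$, collapsing the inner sum to $\tfrac{1}{x-3k}$. Summing over $k$ and substituting $n = x-3k$ yields $\Sigma_1 = \sum_{n \equiv x \pmod{3},\ 0 < n \leq x}\tfrac{1}{n}$, which is the first summand of $Q_{r,s}(t)$. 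In the divisibility case $6 \mid t-2r+s$, the boundary contribution $\log 2$ (from $k = x/3$, $i = 0$) together with the extra sum $\sum_{i=1}^{(t+r+s)/3-1}\binom{(t+r+s)/3-1}{i}\tfrac{(-1)^i}{i}(1 - 2^{-i})$ equals $\int_{1/2}^1\tfrac{(1-u)^{(t+r+s)/3-1}}{u}\,du$, recovered by binomial expansion together with $\int_{1/2}^1 u^{-1}\,du = \log 2$ and $\int_{1/2}^1 u^{i-1}\,du = (1-2^{-i})/i$ for $i \geq 1$; this supplies the integral in case (2).

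The crux is $\Sigma_2$. Using $\tfrac{1}{N\cdot 2^N} = \int_0^{1/2} y^{N-1}\,dy$ (with $N = x-3k+i > 0$), rewrite the inner $i$-sum as $\int_0^{1/2} y^{x-3k-1}(1-y)^{2k+r-1}\,dy$. Interchanging summation and integration, the task reduces to evaluating $\int_0^{1/2} \tfrac{(1-y)^{r-1}}{y}\sum_k \binom{x+r-k-1}{2k+r-1} y^{x-3k}(1-y)^{2k}\,dy$. Expanding the polynomial sum in powers of $y$ and reindexing so that the new variable $n$ plays the role of the outer denominator, we split contributions according to the parity of the lower binomial index appearing after expansion. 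Lemma \ref{binomidentity}(\ref{eveneq}), divided through by $2^{n-1}$, produces exactly the bracket $\tfrac{1}{n}\bigl(\tfrac{1}{2} + \mathbbm{1}_{2\mid n}\tfrac{(-1)^{n/2}}{2^n} - \tfrac{3}{2}\mathbbm{1}_{3\mid n}\tfrac{1}{2^{2n/3}}\bigr)$, while Lemma \ref{binomidentity}(\ref{oddeq}), divided through by $2^n$, produces the bracket with $\tfrac{1}{n+3/2}$ and the half-weighted parity indicators. The upper bounds $x + 3\lfloor (r-1)/2\rfloor$ and $x + 3\lfloor (r-2)/2\rfloor$ in $Q_{r,s}(t)$ emerge from the summation range $0 \leq k \leq \lfloor x/3\rfloor$ combined with the parity splitting of the binomial index.

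The main obstacle is precisely this reorganization for $\Sigma_2$: one must verify that after interchanging the order of summation only summands with $n \equiv x \pmod{3}$ survive, and that the upper limits on $n$ are correctly identified. A case analysis across $r \in \{1,2\}$, $s \in \{-1,0,1\}$, and the residue of $t$ modulo $6$ inherited from \eqref{betainS} is required, but once Lemma \ref{binomidentity} is recognized as the engine producing both brackets in $Q_{r,s}(t)$, the verification is mechanical.
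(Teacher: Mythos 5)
Your treatment of $\Sigma_1$ and of the $\log 2$ correction is correct and matches the paper's argument: the beta-integral/partial-fraction identity collapses the inner sum to $\tfrac{1}{x-3k}$ after cancelling the outer binomial, and the term-by-term integration of $\int_{1/2}^1 (1-u)^{d}u^{-1}\,du$ recovers the extra sum plus $\log 2$ in case (2). The gap is in $\Sigma_2$, which is the technical heart of the proposition. Writing $\tfrac{1}{N2^N}=\int_0^{1/2}y^{N-1}\,dy$ and packaging the inner sum as $\int_0^{1/2}y^{x-3k-1}(1-y)^{2k+r-1}\,dy$ is fine, but your next step --- ``expanding the polynomial sum in powers of $y$'' and integrating term by term --- is circular: expanding $(1-y)^{2k+r-1}$ binomially and integrating simply reproduces the alternating double sum you started from, and that sum is \emph{not} of the form $\sum_{i}\tfrac{1}{n-3i}\binom{n-i-1}{2i}2^{i}$ appearing in Lemma \ref{binomidentity}. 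The missing ingredient is a genuine identity converting the alternating inner sum into a sum of positive terms, namely the negative-binomial form of the incomplete beta function at $z=\tfrac12$,
\begin{equation*}
\binom{a+m}{m}\int_0^{1/2}y^{a-1}(1-y)^{m}\,dy \;=\; \frac{1}{a}\sum_{j=0}^{m}\binom{a-1+j}{j}\frac{1}{2^{a+j}},
\end{equation*}
with $a=x-3k$, $m=2k+r-1$ (this is the role played by Lemma 4.4.1 of \cite{CDH} in the paper). Only after this conversion, an exchange of the order of summation, the substitution $b=a-k$, the parity split $i=2i'$ versus $i=2i'+1$, and the reindexing $p=b+i'$ do the sums $\sum_{i'}\tfrac{1}{n-3i'}\binom{n-i'-1}{2i'}2^{i'}$ and $\sum_{i'}\tfrac{1}{n-3i'}\binom{n-i'}{2i'+1}2^{i'}$ of Lemma \ref{binomidentity} emerge, together with the upper limits $x+3\lfloor\tfrac{r-1}{2}\rfloor$ and $x+3\lfloor\tfrac{r-2}{2}\rfloor$. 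Declaring this ``mechanical'' once Lemma \ref{binomidentity} is in hand understates the work: without the identity above, the reorganization you describe does not terminate in the required form. (A minor additional point: no case analysis over $r\in\{1,2\}$ and $s\in\{-1,0,1\}$ is needed; the paper's reindexing is uniform in $r$ and $s$, with the dependence absorbed into the floor functions.)
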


\begin{proof} We begin with the case where $t - 2r + s$ is not a multiple of $6$. By Proposition \ref{Srst}, we have
\begin{equation*} S_{r,s}(t) = \frac{1}{2} \sum_{k=0}^{\lfloor \frac{t-2r+s}{6} \rfloor} \binom{ \frac{t-2k -2 + s}{2}}{2k + r -1} \sum_{i=0}^{2k+r-1} \binom{2k+r-1}{i} \frac{(-1)^i}{\frac{t+s}{2} - 3k - r + i} \left( 1 - \frac{1}{2^{\frac{t+s}{2} -3k -r + i}} \right). \end{equation*}
We can simplify the sum that constitutes a portion of this expression, using Lemma 4.4.1 in \cite{CDH}:
\begin{equation*} S^{(1)}_{r,s}(t) := \frac{1}{2} \sum_{k=0}^{\lfloor \frac{t-2r+s}{6} \rfloor} \binom{ \frac{t-2k -2 + s}{2}}{2k + r -1} \sum_{i=0}^{2k+r-1} \binom{2k+r-1}{i} \frac{(-1)^i}{\frac{t+s}{2} - 3k - r + i} = \frac{1}{2} \sum_{k=0}^{\lfloor \frac{t-2r+s}{6} \rfloor} \frac{1}{\frac{t+s}{2} - 3k - r }. \end{equation*}
Now, we consider the other portion of the sum $S^{(2)}_{r,s}(t)$ defined so that $S_{r,s}(t) = S^{(1)}_{r,s}(t) - S^{(2)}_{r,s}(t)$:
\begin{equation*} S^{(2)}_{r,s}(t) := \frac{1}{2} \sum_{k=0}^{\lfloor \frac{t-2r+s}{6} \rfloor} \binom{ \frac{t-2k -2 + s}{2}}{2k + r -1} \sum_{i=0}^{2k+r-1} \binom{2k+r-1}{i} \frac{(-1)^i}{\frac{t+s}{2} - 3k - r + i} \left(\frac{1}{2^{\frac{t+s}{2} -3k -r + i}} \right). \end{equation*}
Applying Lemma 4.4.1 in \cite{CDH} on the inner sum and reindexing, we obtain 
\begin{align*} &\sum_{i=0}^{2k+r-1} \binom{2k+r-1}{i} \frac{(-1)^i}{\frac{t+s}{2} - 3k - r + i} \left(\frac{1}{2^{\frac{t+s}{2} -3k -r + i}} \right) \\
&= \frac{1}{2} \sum_{k=0}^{\lfloor \frac{t-2r+s}{6} \rfloor} \frac{1}{\frac{t+s}{2} - 3k - r} \sum_{i=0}^{2k+r-1} \frac{1}{2^{\frac{t+s}{2} - k - 1 - i}} \binom{\frac{t+s}{2} - k - 2 - i}{\frac{t+s}{2} - 3k - r - 1} \\ &= \frac{1}{2} \sum_{k=0}^{\lfloor \frac{t-2r+s}{6} \rfloor} \frac{1}{\frac{t+s}{2} - 3k - r} \sum_{i=0}^{2k+r-1} \frac{1}{2^{\frac{t+s}{2} - 3k - r + i}} \binom{\frac{t+s}{2} - 3k - r - 1 +i}{i} \\ &= \frac{1}{2} \sum_{i=0}^{2 \lfloor \frac{t-2r+s}{6} \rfloor + r -1} \sum_{k= \lceil \frac{i - r + 1}{2} \rceil}^{\lfloor \frac{t-2r+s}{6} \rfloor} \frac{1}{\frac{t+s}{2} - 3k - r}  \binom{\frac{t+s}{2} - 3k - r - 1 +i}{i} \frac{1}{2^{\frac{t+s}{2} - 3k - r + i}}, \end{align*}
switching the order of summation in the last line. We begin transforming this expression into the form of Lemma \ref{binomidentity}, letting $3a + q = \frac{t+s}{2} - r$ where $q \in \{0, 1, 2 \}$, yielding
\begin{equation*} \frac{1}{2} \sum_{i=0}^{2a+r-1} \sum_{k = \lceil \frac{i-r+1}{2} \rceil}^a \frac{1}{3a + q - 3k} \binom{3a-3k + q  - 1 +i}{i} \frac{1}{2^{3a-3k+q+i}}. \end{equation*}
Further, we substitute $b = a -k$, and we get 
\begin{equation*} \frac{1}{2} \sum_{i=0}^{2a+r-1} \sum_{b = 0}^{a - \lceil \frac{i-r+1}{2} \rceil} \frac{1}{3b + q} \binom{3b + q - 1 +i}{i} \frac{1}{2^{3b+q + i}}. \end{equation*}
Splitting into cases when $i = 2i'$ is even and $i = 2i' + 1$ is odd, we rewrite the sum as follows: 
\begin{align*} \frac{1}{2} \sum_{i'=0}^{a + \lfloor \frac{r-1}{2} \rfloor} \sum_{b = 0}^{a - i' + \lfloor \frac{r-1}{2} \rfloor} \frac{1}{3b + q} \binom{3b + q - 1 + 2i'}{2i'} \frac{1}{2^{3b+q + 2i'}} \\ + \ \frac{1}{2} \sum_{i'=0}^{a+ \lfloor \frac{r-2}{2} \rfloor} \sum_{b = 0}^{a - i' + \lfloor \frac{r-2}{2} \rfloor} \frac{1}{3b + q} \binom{3b + q + 2i'}{2i' + 1} \frac{1}{2^{3b+q + 2i' + 1}}. \end{align*}
Finally, we consider the quantity $p = b + i'$ and we reindex the sum by $p$ and $i'$, giving
\begin{align*} \frac{1}{2} \sum_{p=0}^{a + \lfloor \frac{r-1}{2} \rfloor}  \sum_{i'= 0}^{p + \lfloor (q-1)/3 \rfloor} \frac{1}{3p - 3i' + c} \binom{3p - i' + q - 1}{2i'} \frac{1}{2^{3p-i'+q}} \\ + \ \frac{1}{2} \sum_{p=0}^{a + \lfloor \frac{r-2}{2} \rfloor} \sum_{i'= 0}^{p + \lfloor (q-1)/3 \rfloor } \frac{1}{3p - 3i' + q} \binom{3p - i' + q}{2i'+1} \frac{1}{2^{3p-i'+q + 1}}. \end{align*}
Reindexing the sums gives $i' \leq \min \left( \lfloor a/2 \rfloor, p \right)$. The binomial coefficients are nonzero when $i' \leq p + \lfloor q-1/3 \rfloor$, which is less than $\min \left( \lfloor a/2 \rfloor, p \right)$, justifying the limits on the sums. We now apply Lemma \ref{binomidentity} to write the first summand as
\begin{align*} \sum_{\substack{n \equiv q (\text{mod} 3) \\ n > 0 }}^{3a + 3 \lfloor \frac{r-1}{2} \rfloor + q} \frac{1}{n} \left( \frac{1}{2} + \mathbbm{1}_{2 \mid n} \frac{(-1)^{n/2}}{2^n} - \frac{3}{2} \mathbbm{1}_{3 \mid n} \frac{1}{2^{2n/3}} \right). \end{align*}
Similarly, we write the second summand as 
\begin{align*} \sum_{\substack{n \equiv q (\text{mod} 3) \\ n > 0 }}^{3a + 3 \lfloor \frac{r-2}{2} \rfloor + q} \frac{1}{n+3/2} \left( \frac{1}{2} + \frac{1}{4} \mathbbm{1}_{2 \mid n} \frac{(-1)^{n/2}}{2^n} + \frac{1}{4} \mathbbm{1}_{2 \mid (n-1)} \frac{(-1)^{(n-1)/2}}{2^n} - \frac{3}{4} \mathbbm{1}_{3 \mid n} \frac{1}{2^{2n/3}} \right). \end{align*}
Substituting $x = 3a + q = \frac{t+s}{2} - r$ and combining with the original first term gives the result.

In the case when $t-2r+s$ is a multiple of $6$, we have $q = 0$. From Proposition \ref{Srst}, we have
\begin{align*} S_{r,s}(t) = \frac{1}{2} \Bigg( \log(2) &+ \sum_{k=0}^{\frac{t-2r+s}{6} - 1} \binom{ \frac{t-2k -2 + s}{2}}{2k + r -1} \sum_{i=0}^{2k+r-1} \binom{2k+r-1}{i} \frac{(-1)^i}{\frac{t+s}{2} - 3k - r + i} \left( 1 - \frac{1}{2^{\frac{t+s}{2} -3k -r + i}} \right) 
\\ &+ \sum_{i=1}^{\frac{t+r+s}{3}-1} \binom{\frac{t+r+s}{3}-1}{i} \frac{(-1)^i}{i} \left( 1 - \frac{1}{2^i} \right) \Bigg). \end{align*}
We isolate the term $(i,k) = (0, \frac{t+r+s}{3})$, set $q = 0$, and reindex the above computations to find
\begin{align}\label{mult6S}
     S_{r,s}(t) = \frac{1}{2} \Bigg( \log(2) &+ \sum_{\substack{n \equiv 0 \pmod{3} \\ n > 0 }}^{\frac{t+s}{2} -r} \frac{1}{n} - \sum_{\substack{n \equiv 0 (\text{mod} 3) \\ n > 0 }}^{\frac{t+s}{2} -r + 3 \lfloor \frac{r-1}{2} \rfloor} \frac{1}{n} \left( \frac{1}{2} + \mathbbm{1}_{2 \mid n} \frac{(-1)^{n/2}}{2^n} - \frac{3}{2} \mathbbm{1}_{3 \mid n} \frac{1}{2^{2n/3}} \right) \nonumber \\ &- \sum_{\substack{n \equiv 0 (\text{mod} 3) \\ n > 0 }}^{\frac{t+s}{2} -r + 3 \lfloor \frac{r-2}{2} \rfloor} \frac{1}{n+3/2} \left( \frac{1}{2} + \frac{1}{4} \mathbbm{1}_{2 \mid n} \frac{(-1)^{n/2}}{2^n} + \frac{1}{4} \mathbbm{1}_{2 \mid (n-1)} \frac{(-1)^{(n-1)/2}}{2^n} - \frac{3}{4} \mathbbm{1}_{3 \mid n} \frac{1}{2^{2n/3}} \right) \\ \nonumber
     &+ \sum_{i=1}^{\frac{t+r+s}{3}-1} \binom{\frac{t+r+s}{3}-1}{i} \frac{(-1)^i}{i} \left( 1 - \frac{1}{2^i} \right) \Bigg). \nonumber
\end{align}
Using Lemma 4.4(2) in \cite{CDH}, the sum of the first and last terms becomes 
\begin{equation*} \log 2 + \sum_{i=1}^{\frac{t+r+s}{3}-1} \binom{\frac{t+r+s}{3}-1}{i} \frac{(-1)^i}{i} \left( 1 - \frac{1}{2^i} \right) = \log 2 + \int_{1/2}^1 \frac{(1-x)^{\frac{t+r+s}{3}-1}-1}{x} dx =  \int_{1/2}^1 \frac{(1-x)^{\frac{t+r+s}{3}-1}}{x} dx. \end{equation*}
Substituting this into \eqref{mult6S} gives the result. \end{proof}

Together with \eqref{betainS}, Proposition \ref{almostthetheorem} demonstrates Theorem \ref{betastart}.

\section{Proof of Theorem \ref{theorem2}(3)}\label{sectionconj}

We demonstrate that $\beta^*_t < \frac{1}{2}$ for all $t$. To do so, we consider the fluctuations of $\beta_t^*$ for large $t$. Define the auxiliary functions $f_1(n), f_2(n), f_3(n)$ as 
\begin{equation*}
    f_1(n) = \frac{1}{2n} - \mathbbm{1}_{2 \mid n} \frac{(-1)^{n/2}}{n 2^n} + \mathbbm{1}_{3 \mid n} \frac{3}{2n \cdot 2^{2n/3}},
\end{equation*} 
\begin{equation*} f_2(n) = - \frac{1}{2(n+3/2)} - \mathbbm{1}_{2 \mid n} \frac{(-1)^{n/2}}{4(n+3/2) 2^n} + \mathbbm{1}_{2 \mid n} \frac{(-1)^{(n-1)/2}}{4(n+3/2) 2^n} + \mathbbm{1}_{3 \mid n} \frac{3}{4(n+3/2) \cdot 2^{2n/3}}, \end{equation*}
\begin{equation*}
f_3(n) = \begin{cases} \displaystyle{\int_{1/2}^1 \frac{(1-x)^{\lfloor n/3 \rfloor}}{x} dx}\ \text{if} \ n \equiv 1, 2 \pmod{3}, \\ 0 \ \text{if} \ n \equiv 0 \pmod{3}. \end{cases}
\end{equation*}
For even $t$, we have the recurrence
\begin{equation*} \beta_t^* = \beta_{t-6}^* + f_1 \left(\frac{t}{2} - 2 \right) + f_1 \left(\frac{t}{2} - 1\right) + f_2\left(\frac{t}{2} - 2\right) + f_2\left(\frac{t}{2} - 4\right) + f_3\left(t\right). \end{equation*}
For odd $t$, we have, similiarly, that
\begin{equation*} \beta_t^* = \beta_{t-6}^* + f_1\left(\frac{t-1}{2}\right) + f_1\left(\frac{t-5}{2}\right) + f_2\left(\frac{t-1}{2} - 3\right) + f_2\left(\frac{t-5}{2}\right) + f_3\left(t\right). \end{equation*}
Suppose that $n \geq 3/2$. Accounting for pairs of terms in the recurrences, we bound the sums, using 
\begin{align}\label{bound1} f_1\left(n\right) + f_2\left(n\right) &\leq \frac{3}{4n(n+3/2)} + \frac{1}{n 2^n} + \frac{3}{2 n \cdot 2^{2n/3}} + \frac{1}{4(n+3/2)2^n} + \frac{3}{4(n+3/2) \cdot 2^{2n/3}} \nonumber \\  &\leq \frac{3}{4n^2} + \frac{5}{4 n 2^n} + \frac{9}{4 n \cdot 2^{2n/3}}, \end{align}
\begin{align}\label{bound2} f_1(n) + f_2(n-3) &\leq \frac{-3}{4n(n-3/2)} + \frac{1}{n 2^n} + \frac{3}{2 n \cdot 2^{2n/3}} + \frac{2}{(n-3/2)2^n} + \frac{1}{4(n-3/2) \cdot 2^{2n/3}} \nonumber \\
&\leq \frac{5}{4 (n-3/2) 2^n} + \frac{1}{4 (n-3/2) \cdot 2^{2n/3}}. \end{align}
Moreover, since $(1-x)^y/x$ is a nonnegative decreasing function of $y$, we have that
\begin{equation}\label{bound3} f_3(t) \leq \int_{1/2}^1 \frac{(1-x)^{\frac{t}{3}-1}}{x} dx \leq \int_{1/2}^1 \frac{(1/2)^{\frac{t}{3}}-1}{x} dx = \frac{2 \ln 2}{2^{\frac{t}{3}}}.  \end{equation}

Fix odd $t' \in \mathbb{N}$. For each odd $t \geq t' \pmod{6}$, we crudely bound $\beta^*_t$, including all the residues modulo 3 in the sum, using \eqref{bound1}, \eqref{bound2}, and \eqref{bound3}, and find
\small
\begin{align} \beta^*_t &= \beta^*_{t'} + \sum_{\substack{t \equiv t' \pmod{6} \\ t > t'}} f_1(\frac{t-1}{2}) + f_1\left(\frac{t-5}{2}\right) + f_2\left(\frac{t-1}{2} - 3\right) + f_2\left(\frac{t-5}{2}\right) + f_3\left(t\right) \\ &\leq \beta^*_{t'} +
\sum_{\substack{t \equiv t' \pmod{2} \\ t > t'}} f_1\left(\frac{t-1}{2}\right) + f_1\left(\frac{t-5}{2}\right) + f_2\left(\frac{t-1}{2} - 3\right) + f_2\left(\frac{t-5}{2}\right) + f_3\left(t\right) \nonumber \\
&\leq \beta^*_{t'} + \sum_{n \geq \frac{t'-5}{2}} \frac{3}{4n^2} + \frac{5}{4 n 2^n} + \frac{9}{4 n \cdot 2^{2n/3}} + \sum_{n \geq \frac{t'-1}{2}} \frac{5}{4 (n-3/2) 2^n} + \frac{1}{4 (n-3/2) \cdot 2^{2n/3}} + \sum_{n \geq t'} \frac{2 \ln 2}{2^{\frac{t}{3}}}. \nonumber
\end{align}

\normalsize
We now choose a suitable value of $t'$. Let $t' = 21$. Evaluating the above series yields that $\beta^*_t \leq \beta^*_{21} + 0.17052684...$ for $t > t'$, where $\beta^*_{21} = 0.30472711...$ Thus, for all odd $t > 21$, we have that $\beta^*_t \leq 0.47525396... < \frac{1}{2}$. A finite computational check demonstrates that $\beta^*_t < \frac{1}{2}$ for odd $t \leq 21$. 

Similarly, in the even case, we again fix $t' \in \NN$ and obtain that 
\begin{align*} \beta^*_t \leq \beta^*_{t'} + \sum_{n \geq \frac{t'}{2} - 2} \frac{3}{4n^2} + \frac{5}{4 n 2^n} + \frac{9}{4 n \cdot 2^{2n/3}} + \sum_{n \geq \frac{t'}{2} - 1} \frac{5}{4 (n-3/2) 2^n} + \frac{1}{4 (n-3/2) \cdot 2^{2n/3}} + \sum_{n \geq t'} \frac{2 \ln 2}{2^{\frac{t}{3}}}.
\end{align*} 
In this case, we take $t' = 20$. Computing the above series yields that $\beta^*_t \leq \beta^*_{20} + 0.18501868...$ for $t > t'$, where $\beta^*_{20} = 0.30607337...$ Thus, for all even $t > 20$, we have that $\beta^*_t \leq 0.49109205... < \frac{1}{2}$. As before, a finite computational check demonstrates that $\beta^*_t < \frac{1}{2}$ for even $t \leq 20$. 

\section{Proof of Theorem \ref{theorem3}}\label{sectionlimits}

To find a limit for $\gamma_t^*$ as $t \rightarrow \infty$, it suffices to find a limit for $2 \beta_t^*$ as $t \rightarrow \infty$. From Theorem \ref{betastart}, we can find limits for $2 \beta_t^*$ along $t \equiv t' \pmod{6}$ for $t' \in \mathbb{N}$.  As $t \rightarrow \infty$ along $t \equiv t' \pmod{6}$ with $t'$ even, we have that 
\small
\begin{align}\label{evenlimit} \lim_{\substack{t \rightarrow \infty \\ t \equiv t' \pmod{6}}} &2\beta^*_t  = \sum_{\substack{n \equiv \frac{t'}{2} - 2 \pmod{3} \\ n > 0 }}^{\infty} \frac{1}{n} - \sum_{\substack{n \equiv \frac{t'}{2} - 2 \pmod{3} \\ n > 0 }}^{\infty} \frac{1}{n} \left( \frac{1}{2} + \mathbbm{1}_{2 \mid n} \frac{(-1)^{n/2}}{2^n} - \frac{3}{2} \mathbbm{1}_{3 \mid n} \frac{1}{2^{2n/3}} \right) \nonumber \\
&- \sum_{\substack{n \equiv \frac{t'}{2}-2 \pmod{3} \\ n > 0 }}^{\infty} \frac{1}{n+3/2} \left( \frac{1}{2} + \frac{1}{4} \mathbbm{1}_{2 \mid n} \frac{(-1)^{n/2}}{2^n} + \frac{1}{4} \mathbbm{1}_{2 \mid (n-1)} \frac{(-1)^{(n-1)/2}}{2^n} - \frac{3}{4} \mathbbm{1}_{3 \mid n} \frac{1}{2^{2n/3}} \right) \\
&+ \sum_{\substack{n \equiv \frac{t'}{2} - 1 \pmod{3} \\ n > 0 }}^{\infty} \frac{1}{n} - \sum_{\substack{n \equiv \frac{t'}{2} - 1 \pmod{3} \\ n > 0 }}^{\infty} \frac{1}{n} \left( \frac{1}{2} + \mathbbm{1}_{2 \mid n} \frac{(-1)^{n/2}}{2^n} - \frac{3}{2} \mathbbm{1}_{3 \mid n} \frac{1}{2^{2n/3}} \right)\nonumber  \\
&- \sum_{\substack{n \equiv \frac{t'}{2}-1 \pmod{3} \\ n > 0 }}^{\infty} \frac{1}{n+3/2} \left( \frac{1}{2} + \frac{1}{4} \mathbbm{1}_{2 \mid n} \frac{(-1)^{n/2}}{2^n} + \frac{1}{4} \mathbbm{1}_{2 \mid (n-1)} \frac{(-1)^{(n-1)/2}}{2^n} - \frac{3}{4} \mathbbm{1}_{3 \mid n} \frac{1}{2^{2n/3}} \right) \nonumber. \end{align}
\normalsize
Here, we use the fact that the integral terms converge to $0$ as $t \rightarrow \infty$.
As $t \rightarrow \infty$ along $t \equiv t' \pmod{6}$ with $t'$ odd, a similar expression is derived from the second expression in Theorem 5.1. Define the auxiliary functions $g_1(k), g_2(k), g_3(k), g_4(k), g_5(k), g_6(k)$ as follows:
\begin{align*}
g_1(k) &= \sum_{\substack{n \equiv k \pmod{3} \\ n > 0}} \left( \frac{1}{n} - \frac{1}{2n} - \frac{1}{2(n+3/2)} \right) = \sum_{n \equiv k \pmod{3}} \frac{3}{2n(2n+3)}, \\
g_2(k) &= \sum_{\substack{n \equiv k \pmod{3} \\ n > 0}} \mathbbm{1}_{2 \mid n} \frac{(-1)^{n/2}}{n \cdot 2^n} = \sum_{m \equiv k/2 \pmod{3}} \frac{(-1)^m}{2m \cdot 2^{2m}},   \\
g_3(k) &= \begin{cases} 
\displaystyle{\sum_{\substack{n \equiv 0 \pmod{3} \\ n > 0}} \frac{3}{2} \mathbbm{1}_{3 \mid n} \frac{1}{n \cdot 2^{2n/3}} = \sum_{m > 0} \frac{1}{2m \cdot 2^{2m}}} & \text{if } k \equiv 0 \pmod{3},  \\ 
0 & \text{otherwise,} 
\end{cases} \\
g_4(k) &= \sum_{\substack{n \equiv k \pmod{3} \\ n > 0}} \frac{1}{4(n+3/2)} \mathbbm{1}_{2 \mid n} \frac{(-1)^{n/2}}{2^n} = \sum_{m \equiv k/2 \pmod{3}} \frac{1}{2(4m+3)} \frac{(-1)^m}{2^{2m}}, \\  
g_5(k) &= \sum_{\substack{n \equiv k \pmod{3} \\ n > 0}} \frac{1}{4(n+3/2)} \mathbbm{1}_{2 \mid (n-1)} \frac{(-1)^{(n-1)/2}}{2^n} = \sum_{m \equiv (k-1)/2 \pmod{3}} \frac{1}{2(4m+5)} \frac{(-1)^m}{2^{2m+1}}, \\ 
g_6(k) &= \begin{cases} 
\sum_{\substack{n \equiv k \pmod{3} \\ n > 0}} \frac{3}{4} \mathbbm{1}_{3 \mid n} \frac{1}{(n+3/2) \cdot 2^{2n/3}} = \sum_{m > 0} \frac{1}{2(2m+1) \cdot 2^{2m}} & \text{if } k \equiv 0 \pmod{3}, \\ 
0 & \text{otherwise.} 
\end{cases}
\end{align*}

Now, define a combination of the auxiliary functions by 
\begin{align*}
    G(k) := g_1(k) - g_2(k) + g_3(k) - g_4(k) - g_5(k) + g_6(k).
\end{align*}
We can now express the limit as
\begin{equation*}\label{Flimitexpression} \lim_{\substack{t \rightarrow \infty \\ t \equiv t' \pmod{6}}} 2\beta^*_t = \begin{cases} 
G\left(\frac{t}{2} - 2\right) + G\left(\frac{t}{2} - 1 \right) \textrm{ if } t \textrm{ is even}, \\
G\left(\frac{t-1}{2}\right) + G\left(\frac{t-5}{2}\right) \textrm{ if } t \textrm{ is odd}.
\end{cases} \end{equation*}
Note that each $g_i(k)$ is determined solely by the value of $k \pmod{3}$. Thus, it suffices to compute $g_i(k)$ for $i \in \{ 0,1,2, \dots, 6 \}$ and $k \in \{ 0, 1, 2 \}$. Below, for each fixed $i$, we compute $g_i(k)$ for all $k$.

\begin{enumerate}
\item[\textbf{(i)}] $g_1(k)$: Given the digamma function $\phi(s)$, we use the identity 
\begin{equation*} \phi(s) = -\gamma + \sum_{n=0}^\infty \left( \frac{1}{n+1} - \frac{1}{n+s} \right) \end{equation*}
where $\gamma$ is the Euler--Mascheroni constant. We find that
\small
\begin{equation*} g_1(0) = \sum_{n \equiv 0 \pmod{3}} \frac{3}{2n(2n+3)} = \sum_{m > 0} \frac{1}{2m(6m+3)}  = \frac{1}{6} \sum_{m \geq 0} \left( \frac{1}{m+1} - \frac{1}{m+3/2} \right) = \frac{1}{6} \left( \phi(3/2) + \gamma \right) = \frac{1}{3}(1 - \ln(2)) \end{equation*}
\normalsize
and similarly find that $g_1(1) = \displaystyle{\frac{1}{9} \left( \pi {\sqrt{3}} - 3 \ln(2) \right)}$ and $g_1(2) = \displaystyle{\frac{1}{9} \left(9 - \pi {\sqrt{3}} - 3 \ln(2) \right)}$.

\normalsize 

\item[\textbf{(ii)}] $g_2(k)$: When $k = 0$, we have
\begin{equation*} g_2(0) = \sum_{m \equiv 0 \pmod{3}} \frac{(-1)^m}{2m \cdot 2^{2m}} = \sum_{m' > 0} \frac{(-1/4)^{3m'}}{6m'} = -\frac{1}{6} \ln(1+(1/4)^3) = -\frac{1}{6} \ln(65/64).\end{equation*}
When $k \neq 0$, we use the roots of unity. Given $\omega = e^{2 \pi i /3}$, we use the identities
\begin{align*} -\frac{1}{3} \left( \ln \left( 1 - x \right) + \omega^2 \ln(1 - \omega x) + \omega \ln(1 - \omega^2 x) \right) &= x + \frac{x^4}{4} + \frac{x^7}{7} + \dots  \\ -\frac{1}{3} \left( \ln \left( 1 - x \right) + \omega \ln(1 - \omega x) + \omega^2 \ln(1 - \omega^2 x) \right) &= x^2 + \frac{x^5}{5} + \frac{x^8}{8} + \cdots
\end{align*} 
and we get that
\small
\begin{align*} g_2(1) &= -\frac{1}{6} \left( \ln \left( \frac{5}{4} \right) - \ln \left( \sqrt{13} / 4 \right) - \sqrt{3} \arctan \left( \frac{\sqrt{3}}{7} \right) \right), \\
g_2(2) &= \frac{1}{6} \left( \ln \left( \frac{5}{4} \right) - \ln \left( \sqrt{13} / 4 \right) + \sqrt{3} \arctan \left( \frac{\sqrt{3}}{7} \right) \right). \end{align*}

\normalsize
\item[\textbf{(iii)}] $g_3(k)$: We find that
\begin{equation*} g_3(0) = \sum_{m > 0} \frac{1}{2m  \cdot 2^{2m}} = \frac{1}{2} \sum_{m > 0} \frac{(1/4)^m}{m} = -\frac{1}{2} \ln(1- \frac{1}{4}) = -\frac{1}{2} \ln(3/4). \end{equation*}
We have already stated $g_3(1) = g_3(2) = 0$.

\item[\textbf{(iv)}] $g_4(k)$: We rewrite the values as hypergeometric series:
\begin{align*}
g_4(0) &= \sum_{m = 1}^\infty \frac{(-1)^{3m}}{(12m+3) \cdot 2^{6m+1}} = \frac{1}{6} {}_2 F_1 \left( \frac{1}{4}, 1; \frac{5}{4}; -\frac{1}{64} \right) - \frac{1}{6}, \\
g_4(1) &= \sum_{m = 0}^\infty \frac{(-1)^{3m+2}}{(12m+11)2^{6m+5}} = \frac{1}{352} {}_2 F_1 \left( \frac{11}{12}, 1; \frac{23}{12}; -\frac{1}{64} \right), \\
g_4(2) &= \sum_{m = 0}^\infty \frac{(-1)^{3m+1}}{(12m+7)2^{6m+3}} = -\frac{1}{56} {}_2 F_1 \left( \frac{7}{12}, 1; \frac{19}{12}; -\frac{1}{64} \right).
\end{align*}

\item[\textbf{(v)}] $g_5(k)$: We rewrite the values as hypergeometric series: \\
\begin{align*} 
g_5(0) &= \sum_{m \geq 0}^\infty \frac{(-1)^{3m+1}}{(12m+9) \cdot 2^{6m+4}} = - \frac{1}{144} {}_2 F_1 \left( \frac{3}{4}, 1; \frac{7}{4}; -\frac{1}{64} \right), \\
g_5(1) &= \sum_{m \geq 0}^\infty \frac{(-1)^{3m}}{(12m+5) \cdot 2^{6m+2}} = \frac{1}{20} {}_2 F_1 \left( \frac{5}{12}, 1; \frac{17}{12}; -\frac{1}{64} \right), \\
g_5(2) &= \sum_{m \geq 0}^\infty \frac{(-1)^{3m+2}}{(12m+13) \cdot 2^{6m+6}} = 1 - {}_2 F_1 \left( \frac{1}{12}, 1; \frac{13}{12}; -\frac{1}{64} \right).
\end{align*}

\item[\textbf{(vi)}] $g_6(k)$: We find that 
\begin{equation*} g_6(0) = \sum_{m > 0} \frac{1}{2(2m+1) \cdot 2^{2m}} = \frac{1}{2} \sum_{m > 0} \frac{(1/2)^{2m}}{2m+1} = \left( \frac{1}{2} \right) \ln\left(\frac{3}{4}\right)-\ln\left(\frac{1}{2}\right) - \frac{1}{2}. \end{equation*}
We have already stated $g_6(1) = g_6(2) = 0$. 
\end{enumerate}

\vspace{1em}

We now compute $G(k)$ for each $k \in \{ 0, 1, 2 \}$. We write out the details for one case, $G(0)$, and remark that the other two follow analogously. We find that
\begin{align} G(0) &= g_1(0) - g_2(0) + g_3(0) - g_4(0) - g_5(0) + g_6(0) \label{F0} \\
&= \nonumber \frac{1}{3}(1 - \ln (2)) + \frac{1}{6} \ln \left( \frac{65}{64} \right) - \frac{1}{2} \ln \left( \frac{3}{4} \right) - \frac{1}{6} {}_2 F_1 \left( \frac{1}{4}, 1; \frac{5}{4}; -\frac{1}{64} \right) + \frac{1}{6} 
\\&+ \frac{1}{144} {}_2 F_1 \left( \frac{3}{4}, 1; \frac{7}{4}; -\frac{1}{64} \right) + \left( \frac{1}{2} \right) \ln\left(\frac{3}{4}\right)-\ln\left(\frac{1}{2}\right) - \frac{1}{2} \nonumber \\ 
&= \frac{1}{6} \ln \left( \frac{65}{4} \right) - \frac{1}{6} {}_2 F_1 \left( \frac{1}{4}, 1; \frac{5}{4}; -\frac{1}{64} \right) + \frac{1}{144} {}_2 F_1 \left( \frac{3}{4}, 1; \frac{7}{4}; -\frac{1}{64} \right). \nonumber
\end{align}

We use the integral representation \cite[15.6.1]{DLMF}
\begin{equation*} _2F_1(a,b,c;z) = \frac{\Gamma(c)}{\Gamma(b) \Gamma(c-b)} \int_0^1 t^{b-1}(1-t)^{c-b-1} (1-tz)^{-a} dt, \end{equation*} which holds when $\Real(c) > \Real(b) > \Real(0)$ and $\arg(1-z) < \pi$, 
to find
\begin{equation*} _2 F_1 \left( \frac{1}{4}, 1; \frac{5}{4}; \frac{-1}{64} \right) = \frac{\Gamma(5/4)}{\Gamma(1/4)} \int_0^1 \frac{1}{(1-t)^{3/4} (1+\frac{t}{64})^{1/4}} dt = \frac{1}{4} (4+4i)(\cot^{-1}(2+2i) + \coth^{-1}(2+2i)), \end{equation*}
\begin{equation*} _2 F_1 \left( \frac{3}{4}, 1; \frac{7}{4}; \frac{-1}{64} \right) = \frac{\Gamma(7/4)}{\Gamma(3/4)} \int_0^1 \frac{1}{(1-t)^{1/4} (1+\frac{t}{64})^{3/4}} dt = \frac{3}{4} (32-32i)(\cot^{-1}(2+2i) - \coth^{-1}(2+2i)). \end{equation*}
Therefore, the two hypergeometric terms in \eqref{F0} become
\begin{equation*} - \frac{1}{6} _2 F_1 \left( \frac{1}{4}, 1; \frac{5}{4}; \frac{-1}{64} \right) + \frac{1}{144} _2 F_1 \left( \frac{3}{4}, 1; \frac{7}{4}; \frac{-1}{64} \right) = (2i) \cot^{-1}(2+2i) + 2 \coth^{-1}(2+2i) = -\frac{1}{6} \ln \left( \frac{13}{5} \right).  \end{equation*}
Combining terms in \eqref{F0} yields $G(0) = \frac{1}{3} \ln \left( \frac{5}{2} \right)$. We analogously show $G(1) = G(2) = \frac{1}{3} \ln \left( \frac{5}{2} \right)$. 

In \eqref{Flimitexpression}, for all $t'$, we are summing two values of $F$ corresponding to two distinct residues mod 3. Thus, since we have that $G(0) + G(1) = G(0) + G(2) = G(1) + G(2) = \frac{2}{3} \ln \left( \frac{5}{2} \right)$, we obtain that 
\[ \lim_{\substack{t \rightarrow \infty \\ t \equiv t' \pmod{6}}} 2\beta^*_t = \lim_{\substack{t \rightarrow \infty \\ t \equiv t'' \pmod{6}}} 2\beta^*_t \]
for any $t', t''$. Therefore, we conclude that $\lim_{t \rightarrow \infty} 2\beta^*_t = \frac{2}{3} \ln \left( \frac{5}{2} \right).$
This proves Theorem \ref{theorem3}. 

\bibliographystyle{plain}
\bibliography{bib.bib}

\end{document}